\begin{document}
\title[\hfil fractional Schr\"{o}dinger-Poisson system]
{Concentrating phenomenon for fractional nonlinear Schr\"{o}dinger-Poisson system with critical nonlinearity}

\author[K. M. Teng ]
{Kaimin Teng}  % in alphabetical order
\address{Kaimin Teng (Corresponding Author)\newline
Department of Mathematics, Taiyuan
University of Technology, Taiyuan, Shanxi 030024, P. R. China}
\email{tengkaimin2013@163.com}

\subjclass[2010]{35B38, 35R11}
\keywords{Fractional Schr\"{o}dinger-Poisson system; concentration phenomena; penalization method.}

\begin{abstract}
In this paper, we study the following fractional Schr\"{o}dinger-Poisson system
\begin{equation*}
\left\{
  \begin{array}{ll}
    \varepsilon^{2s}(-\Delta)^su+V(x)u+\phi u=g(u) & \hbox{in $\mathbb{R}^3$,} \\
     \varepsilon^{2t}(-\Delta)^t\phi=u^2,\,\, u>0& \hbox{in $\mathbb{R}^3$,}
  \end{array}
\right.
\end{equation*}
where $s,t\in(0,1)$, $\varepsilon>0$ is a small parameter. Under some suitable assumptions on potential function $V(x)$ and critical nonlinearity term $g(u)$, we construct a family of positive solutions $u_{\varepsilon}\in H^s(\mathbb{R}^3)$ which concentrates around the global minima of $V$ as $\varepsilon\rightarrow0$.
\end{abstract}

\maketitle
\numberwithin{equation}{section}
\newtheorem{theorem}{Theorem}[section]
\newtheorem{lemma}[theorem]{Lemma}
\newtheorem{definition}[theorem]{Definition}
\newtheorem{remark}[theorem]{Remark}
\newtheorem{proposition}[theorem]{Proposition}
\newtheorem{corollary}[theorem]{Corollary}
\allowdisplaybreaks

\section{Introduction}
In this paper, we study the following fractional Schr\"{o}dinger-Poisson system
\begin{equation}\label{main}
\left\{
  \begin{array}{ll}
    \varepsilon^{2s}(-\Delta)^su+V(x)u+\phi u=g(u) & \hbox{in $\mathbb{R}^3$,} \\
     \varepsilon^{2t}(-\Delta)^t\phi=u^2,\,\, u>0& \hbox{in $\mathbb{R}^3$,}
  \end{array}
\right.
\end{equation}
where $s,t\in(0,1)$ and $\varepsilon>0$ is a small parameter. The potential $V:\mathbb{R}^3\rightarrow\mathbb{R}$ is a bounded continuous function satisfying\\
$(V_0)$ $\inf\limits_{x\in\mathbb{R}^3}V(x)=V_0>0$;\\
$(V_1)$ There is a bounded domain $\Lambda\subset\mathbb{R}^3$ such that
\begin{equation*}
V_0<\min_{\partial\Lambda}V(x),\quad \mathcal{M}=\{x\in\Lambda\,\,|\,\, V(x)=V_0\}\neq{\O}.
\end{equation*}
Without of loss of generality, we may assume that $0\in\mathcal{M}$. The nonlinearity $g:\mathbb{R}\rightarrow\mathbb{R}$ is of $C^1$-class function. The non-local operator $(-\Delta)^s$ ($s\in(0,1)$), which is called fractional Laplace operator, can be defined by
\begin{equation*}
(-\Delta)^su(x)=C_s\,P.V.\int_{\mathbb{R}^3}\frac{u(x)-u(y)}{|x-y|^{3+2s}}\,{\rm d}y=C_s\lim_{\varepsilon\rightarrow0}\int_{\mathbb{R}^3\backslash B_{\varepsilon}(x)}\frac{u(x)-u(y)}{|x-y|^{3+2s}}\,{\rm d}y
\end{equation*}
for $u\in\mathcal{S}(\mathbb{R}^3)$, where $\mathcal{S}(\mathbb{R}^3)$ is the Schwartz space of rapidly decaying $C^{\infty}$ function, $B_{\varepsilon}(x)$ denote an open ball of radius $r$ centered at $x$ and the normalization constant $C_s=\Big(\int_{\mathbb{R}^3}\frac{1-\cos(\zeta_1)}{|\zeta|^{3+2s}}\,{\rm d}\zeta\Big)^{-1}$. Fractional Laplacian appears in lots of real world, such as: fractional quantum mechanics \cite{La,La1}, anomalous diffusion \cite{MK}, financial \cite{CT}, obstacle problems \cite{S}, conformal geometry and minimal surfaces \cite{CM}. In the very recent years, the progress of nonlinear equations involving fractional Lapalcian can be found in \cite{AM,BCPS,BV,CS,CSi,CT,CW,DMV,DPDV,DPW,FL,HZ1,NPV,PP,S,SZ,Teng-1,Teng-2} and so on.

For $u\in\mathcal{S}(\mathbb{R}^3)$, the fractional Laplace operator $(-\Delta)^s$ can be expressed as an inverse Fourier transform
\begin{equation*}
(-\Delta)^su=\mathcal{F}^{-1}\Big((2\pi|\xi|)^{2s}\mathcal{F}u(\xi)\Big),
\end{equation*}
where $\mathcal{F}$ and $\mathcal{F}^{-1}$ denote the Fourier transform and inverse transform, respectively. If $u$ is sufficiently smooth, it is known that (see \cite{NPV}) it is equivalent to
\begin{equation*}
(-\Delta)^su(x)=-\frac{1}{2}C_s\int_{\mathbb{R}^3}\frac{u(x+y)+u(x-y)-2u(y)}{|x-y|^{3+2s}}\,{\rm d}y.
\end{equation*}
By a classical solution of \eqref{main}, we mean two continuous functions that $(-\Delta)^su$ is well defined for all $x\in\mathbb{R}^3$ and satisfies \eqref{main} in pointwise sense.

Since we are looking for positive solutions, we may assume that $g(s)=0$ for $s<0$. Furthermore, we need the following conditions:\\
$(g_0)$ $\lim\limits_{\tau\rightarrow0^{+}}\frac{g(\tau)}{\tau}=0$;\\
$(g_1)$ $\lim\limits_{\tau\rightarrow+\infty}\frac{g(\tau)}{\tau^{2_s^{\ast}-1}}=\kappa>0$;\\
$(g_2)$ there exists $\lambda>0$ such that $g(\tau)\geq\lambda \tau^{q-1}+\tau^{2_s^{\ast}-1}$ for some $\frac{4s+2t}{s+t}<q<2_s^{\ast}$ and all $\tau\geq0$.

The hypotheses $(g_0)$-$(g_2)$ are so-called the critical Berestycki-Lions type conditions, which was introduced in \cite{ZCZ}. For simplicity, we may assume that $\kappa=1$ and  $g(\tau)=f(\tau)+|\tau|^{2_s^{\ast}-2}\tau$, for $\tau>0$. Then system \eqref{main} is equivalent to the following one
\begin{equation}\label{main-0}
\left\{
  \begin{array}{ll}
    \varepsilon^{2s}(-\Delta)^su+V(x)u+\phi u=f(u)+u^{2_s^{\ast}-1} & \hbox{in $\mathbb{R}^3$,} \\
     \varepsilon^{2t}(-\Delta)^t\phi=u^2,\,\, u>0& \hbox{in $\mathbb{R}^3$}
  \end{array}
\right.
\end{equation}
where $f$ satisfies \\
$(f_0)$ $\lim\limits_{\tau\rightarrow0^{+}}\frac{f(\tau)}{\tau}=0$;\\
$(f_1)$ $\liminf\limits_{\tau\rightarrow+\infty}\frac{f'(\tau)}{\tau^{2_s^{\ast}-2}}=0$;\\
$(f_2)$ there exists $\lambda>0$ such that $f(\tau)\geq\lambda \tau^{q-1}$, for $\tau>0$ and some $q\in(\frac{4s+2t}{s+t},2_s^{\ast})$.

In the very recent years, the study of the existence, concentration and multiplicity of positive solutions for fractional Schr\"{o}dinger-Poisson system \eqref{main} is just starting. When $\varepsilon=1$, by using the Nehari-Pohozaev manifold combing monotone trick with global compactness Lemma, Teng \cite{Teng2} studied the existence of positive ground state solution for the system
\begin{equation}\label{main1}
\left\{
  \begin{array}{ll}
    (-\Delta)^su+V(x)u+\phi u=|u|^{p-1}u+|u|^{2_s^{\ast}-2}u & \hbox{in $\mathbb{R}^3$,} \\
    (-\Delta)^t\phi=u^2& \hbox{in $\mathbb{R}^3$.}
  \end{array}
\right.
\end{equation}
Using the similar methods, in \cite{Teng3}, positive ground state solutions for problem \eqref{main1} with $|u|^{p-1}u+|u|^{2_s^{\ast}-2}u$ replaced by $|u|^{p-1}u$ with $p\in(2,2_s^{\ast}-1)$, were established when $s=t$. In \cite{ZDS}, the authors studied the existence of radial solutions for system \eqref{main1} with  $|u|^{p-1}u+|u|^{2_s^{\ast}-2}u$ replaced by $f(u)$, where the nonlinearity $f(u)$ verifies the subcritical or critical assumptions of Berestycki-Lions type. When $0<\varepsilon<1$ small, in \cite{MS}, the authors studied the semiclassical state of the following system
\begin{equation*}
\left\{
  \begin{array}{ll}
    \varepsilon^{2s}(-\Delta)^su+V(x)u+\phi u=f(u) & \hbox{in $\mathbb{R}^N$,} \\
    \varepsilon^{\theta}(-\Delta)^{\frac{\alpha}{2}}\phi=\gamma_{\alpha}u^2& \hbox{in $\mathbb{R}^N$,}
  \end{array}
\right.
\end{equation*}
where $s\in(0,1)$, $\alpha\in(0,N)$, $\theta\in(0,\alpha)$, $N\in(2s,2s+\alpha)$, $\gamma_{\alpha}$ is a positive constant, $f(u)$ satisfies the following subcritical growth assumptions: $0<KF(t)\leq f(t)t$ with some $K>4$ for all $t\geq0$ and $\frac{f(t)}{t^3}$ is strictly increasing on $(0,+\infty)$. In \cite{LZ}, by using the methods mentioned before, Liu and Zhang proved the existence and concentration of positive ground state solution for problem \eqref{main-0}. When the system \eqref{main-0} verifying that $s=t$ and $f(u)+u^{2_s^{\ast}-1}$ replaced by $K(x)|u|^{p-2}u$ which $V$ has positive global minimum and $K(x)$ has global maximum, in \cite{YZZ}, the authors prove the existence of a positive ground state for $\varepsilon>0$ sufficiently small and concentration behavior of these ground state solutions as $\varepsilon\rightarrow0$. In \cite{Teng2}, we studied the system \eqref{main} with competing potential, i.e., $g(u)=K(x)f(u)+Q(x)|u|^{2_s^{\ast}-2}u$, where $f$ is a function of $C^1$ class, superlinear and subcritical nonlinearity, $V(x)$, $K(x)$ and $Q(x)$ are positive continuous functions. Under some suitable assumptions on $V$, $K$ and $Q$, we prove that there is a family of positive ground state solutions which concentrate on the set of minimal points of $V(x)$ and the sets of maximal points of $K(x)$ and $Q(x)$. For the local assumption on the potential $V(x)$, Teng \cite{Teng3} firstly applied the penalization methods developed by \cite{DF} to study the concentration phenomenon of system \eqref{main-1} under the hypotheses made on $V(x)$ and $f$:
\begin{itemize}
  \item $\inf\limits_{x\in\mathbb{R}^3}V(x)=\alpha_0>0$ and there is a bounded domain $\Lambda\subset\mathbb{R}^3$ such that
\begin{equation*}
V_0=\inf_{\Lambda}V(x)<\min_{\partial\Lambda}V(x);
\end{equation*}
  \item $\lim\limits_{\tau\rightarrow0^{+}}\frac{f(\tau)}{\tau^3}=0$, there exist $\lambda>0$ and $C>0$ such that $f(\tau)\geq\lambda \tau^{q-1}$ for some $4\leq q<2_s^{\ast}$ and $|f'(\tau)|\leq C(1+|\tau|^{p-2})$, where $4<p<2_s^{\ast}$;
  \item $\frac{f(\tau)}{\tau^{3}}$ is non-decreasing in $\tau\in (0,+\infty)$.
\end{itemize}
The penalization methods were applied to fractional Schr\"{o}dinger equations, please see \cite{AM,A,HZ1}. For extending our result in \cite{Teng3}, through modifying the penalization methods developed by Byeon and Wang \cite{BW}, Teng \cite{Teng4} studied the concentration behavior of system \eqref{main} with $V(x)$ satisfying $(V_0)$-$(V_1)$ and $g$ verifying
\begin{itemize}
\item $\lim\limits_{\tau\rightarrow0^{+}}\frac{g(\tau)}{\tau}=0$, $\lim\limits_{\tau\rightarrow+\infty}\frac{g'(\tau)}{\tau^{2_s^{\ast}-2}}=0$;
\item  there exists $\lambda>0$ such that $g(\tau)\geq\lambda \tau^{q-1}$ for some $\frac{4s+2t}{s+t}<q<2_s^{\ast}$ and all $\tau\geq0$;
\item $\frac{g(\tau)}{\tau^{q-1}}$ is non-decreasing in $\tau\in (0,+\infty)$.
\end{itemize}

When $s=1$, system \eqref{main} reduces to the following Schr\"{o}dinger-Poisson system
\begin{equation}\label{main-1}
\left\{
  \begin{array}{ll}
    -\varepsilon^2\Delta u+V(x)u+\phi u=g(x,u) & \hbox{in $\mathbb{R}^3$,} \\
    -\varepsilon^2\Delta \phi=u^2& \hbox{in $\mathbb{R}^3$.}
  \end{array}
\right.
\end{equation}
In recent years, there has been increasing attention to \eqref{main-1} on the existence of positive solutions, ground state solutions, multiple solutions and semiclassical states, see for example \cite{AR,BF,DW,HZ,Ruiz,Ruiz1,ZLZ} and the references therein. It is well known that system \eqref{main-1} appearing in quantum mechanics models (see e.g. \cite{LS}) and in semiconductor theory \cite{MRS}. Especially, systems like \eqref{main-1} have been introduced in \cite{BF} as a model to describe solitary waves. Regarding the concentration phenomenon of solutions for Schr\"{o}dinger-Poisson systems like \eqref{main-1}, there has been the object of interest for many scholars. In \cite{H}, the author studied the system \eqref{main-1} with $g(x,v)=f(v)$ satisfying
\begin{itemize}
  \item $\frac{f(t)}{t^3}$ increasing in $(0,\infty)$, $\exists\theta>4$ such that $0<\theta F(t)=\int_0^tf(s)\,{\rm d}s\leq tf(t)$ for all $t>0$,
  \item $f'(t)t^2-3f(t)t\geq Ct^{\sigma}$, $\sigma\in(4,6)$, and $f(t)=o(t^3)$ as $t\rightarrow0$.
\end{itemize}
By using Ljusternik-Schnirelmann theory and minimax method, the author obtained the multiplicity of positive solutions for $\varepsilon>0$ small which concentrate on the minima of $V(x)$. In \cite{WTXZ}, Wang et al. studied the existence and concentration of positive ground state solutions for system \eqref{main-1} with $g(x,v)=b(x)f(v)$ satisfying
\begin{itemize}
  \item $\frac{f(t)}{t^3}$ increasing in $(0,\infty)$, $\frac{F(t)}{t^4}\rightarrow+\infty$ as $t\rightarrow\infty$,
  \item $|f(t)|\leq c(1+|t|^{p-1})$, $p\in(4,6)$, and $f(t)=o(t^3)$ as $t\rightarrow0$.
\end{itemize}
In the critical case, He and Zou \cite{HZ} considered system \eqref{main-1} with $g(x,v)=v^5+ f(v)$, where $f$ satisfies the similar hypotheses as \cite{H}, proved that system \eqref{main-1} has a ground state solution concentrating around a global minimum of $V(x)$ as $\varepsilon\rightarrow0$. In \cite{WTXZ1}, the authors studied the system \eqref{main-1} with $g(x,v)=b(x)f(v)+|v|^4v$, where $f$ satisfies
\begin{itemize}
  \item $\frac{f(t)}{t^3}$ increasing in $(0,\infty)$, $f(t)=o(t^3)$ as $t\rightarrow0$,
  \item $f(t)\geq ct^{q-1}$, $|f(t)|\leq c(1+|t|^{p-1})$, $4<q\leq p<6$.
\end{itemize}
Under some suitable assumptions on $V(x)$ and $b(x)$, Wang et al. \cite{WTXZ1} proved the existence of least energy solution $(u_{\varepsilon},\phi_{\varepsilon})$ and then showed that $u_{\varepsilon}$ converges to the least energy solution of the associated limit problem and concentrates to some set in $\mathbb{R}^3$ depending on the potentials $V$ and $b$.
The above assumptions made on the potential $V(x)$ is global, for the local assumption, there are few results obtained in the literature. As far as we know, only in \cite{HL} studied the Schr\"{o}dinger-Poisson system \eqref{main-1} with $V(x)$ satisfying the local condition $\inf\limits_{\Lambda}V(x)<\inf\limits_{\partial\Lambda}V(x)$ and $g(x,v)=\lambda|v|^{p-2}v+|v|^4v$ for $3< p\leq4$, where $\Lambda$ is an open set of $\mathbb{R}^3$ and $\lambda>0$, the authors constructed a family of positive solutions which concentrates around a local minimum of $V$ as $\varepsilon\rightarrow0$.

The semiclassical state of the following Schr\"{o}dinger-Poisson system
\begin{equation}\label{main-1-1}
\left\{
  \begin{array}{ll}
    -\varepsilon^2\Delta u+V(x)u+K(x)\phi u=u^p & \hbox{in $\mathbb{R}^3$,} \\
    -\Delta \phi=u^2& \hbox{in $\mathbb{R}^3$}
  \end{array}
\right.
\end{equation}
has attracted many scholars' attention. When $p\in(1,5)$, Ruiz and Vaira \cite{RV} proved the existence of multi-bump solutions of system and these bumps concentrate around a local minimum of the potential $V$. Ianni and Vaira \cite{IV} obtained the existence of positive bound state solutions which concentrate on a non-degenerate local minimum or maximum of $V$ by using a Lyapunov-Schmitt reduction method. Ianni and Vaira \cite{IV1} also showed the existence of radially symmetric solutions, which concentrate on the spheres. For the critical case, for system \eqref{main-1-1} with $u^p$ replaced by $f(u)+u^5$, in \cite{LGF}, the authors proved the multiplicity of positive solutions and the number of positive
solutions depends on the profile of the potential and that each solution concentrates around its corresponding global minimum point of the potential in the semiclassical limit.
For the local assumptions on the potential $V(x)$, Seok \cite{S} studied the system \eqref{main-1-1} with $u^p$ replaced by $f(u)$ satisfying
\begin{itemize}
  \item $f(t)=o(t)$ as $t\rightarrow0$, $\lim\limits_{t\rightarrow\infty}\frac{f(t)}{t^p}<\infty$ for some $p\in(1,5)$,
  \item $\exists T>0$ such that $\frac{1}{2}mT^2<F(T)$, $F(t)=\int_0^tf(s)\,{\rm d}s$
\end{itemize}
and proved the existence of the spike solutions through following a variational approach developed by Byeon-Jeanjean \cite{BJ,BJ1}. Using the similar ideas as Byeon-Jeanjean \cite{BJ}, Zhang \cite{Z} considered the system \eqref{main-1-1} with $u^p$ replaced by a general nonlinearity $f(u)$ satisfying the critical growth assumptions
\begin{itemize}
  \item $f(t)=o(t)$ as $t\rightarrow0$, $\lim\limits_{t\rightarrow\infty}\frac{f(t)}{t^5}=\kappa>0$,
  \item $\exists C>0$ and $p<6$ such that $f(t)\geq\kappa t^5+Ct^{p-1}$ for $t\geq0$
\end{itemize}
and constructed a solution $(u_{\varepsilon},\phi_{\varepsilon})$, which concentrates at an isolated component of positive locally minimum points of $V$ as $\varepsilon\rightarrow0$.

From the above known results, we see that the monotonic hypothesis $\frac{f(t)}{t^3}$ is necessary to study the concentration behavior of system \eqref{main-1} whatever critical case or subcritical case. The purpose of this paper is to weak this monotonic hypothesis to the following one:\\
$(f_3)$ $\frac{f(\tau)}{\tau^{q-1}}$ is non-decreasing in $\tau\in (0,+\infty)$, where $q\in(\frac{4s+2t}{s+t},2_s^{\ast})$.

To the best of our knowledge, except \cite{HL}, there are few papers to study the concentration phenomenon of Schr\"{o}dinger-Poisson system \eqref{main-1} with local assumption on the potential $V(x)$, not mention to the fractional Schr\"{o}dinger-Poisson system \eqref{main}. Motivated by the above cited papers, the goal of this paper is to study the existence and concentration of positive bound state solutions for system \eqref{main-0} under $(V_0)$-$(V_1)$ and $(f_0)$-$(f_3)$.

Our main results is as follows.
\begin{theorem}\label{thm1-1}
Let $2s+2t>3$, $s,t\in(0,1)$. Suppose that $V$ satisfies $(V_0)$, $(V_1)$ and $g\in C(\mathbb{R}^{+},\mathbb{R})$ satisfies $(g_0)$--$(g_3)$. Then there exists an $\varepsilon_0>0$ such that system \eqref{main} possesses a positive solution $(u_{\varepsilon},\phi_{\varepsilon})\in H_{\varepsilon}\times\mathcal{D}^{t,2}(\mathbb{R}^3)$ for all $\varepsilon\in(0,\varepsilon_0)$. Moreover, there exists a maximum point $x_{\varepsilon}$ of $u_{\varepsilon}$ such that $\lim\limits_{\varepsilon\rightarrow0}{\rm dist}(x_{\varepsilon},\mathcal{M})=0$ and
\begin{equation*}
u_{\varepsilon}(x)\leq\frac{C\varepsilon^{3+2s}}{C_0\varepsilon^{3+2s}+|x-x_{\varepsilon}|^{3+2s}}\quad x\in\mathbb{R}^3,\,\,\text{and}\,\, \varepsilon\in(0,\varepsilon_0)
\end{equation*}
for some constants $C>0$ and $C_0\in\mathbb{R}$.
\end{theorem}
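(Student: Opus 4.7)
The plan is to implement a Byeon--Wang style penalization scheme adapted to the fractional critical setting, with the weaker monotonicity $(g_3)$ (in place of the usual $g(\tau)/\tau^3$ monotonicity) handled via the single exponent $q>\frac{4s+2t}{s+t}$ supplied by $(g_2)$. As preliminary reduction, for each $u\in H^s(\mathbb{R}^3)$ one obtains via Lax--Milgram on $\mathcal{D}^{t,2}(\mathbb{R}^3)$ (applicable because $2s+2t>3$ yields the embedding $H^s\hookrightarrow L^{12/(3+2t)}$) a unique nonnegative $\phi_{\varepsilon,u}^t$ solving $\varepsilon^{2t}(-\Delta)^t\phi=u^2$; substitution reduces \eqref{main} to a single nonlocal equation with variational functional $I_\varepsilon$ on $H_\varepsilon$. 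After the rescaling $v(x)=u(\varepsilon x)$ the problem becomes $(-\Delta)^s v+V(\varepsilon x)v+\phi_v^t v=g(v)$ with $\phi_v^t$ solving $(-\Delta)^t\phi=v^2$, and I would introduce a del Pino--Felmer / Byeon--Wang type penalized nonlinearity $\tilde g_\varepsilon(x,\cdot)$ agreeing with $g$ on $\Lambda/\varepsilon$ but truncated outside so that the associated functional $J_\varepsilon$ satisfies the Palais--Smale condition below an appropriate energy.

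Next I would analyze the limit problem $(-\Delta)^sw+V_0w+\phi_w^tw=g(w)$ on $\mathbb{R}^3$. Writing $g(\tau)=f(\tau)+\tau^{2_s^{\ast}-1}$, condition $(g_2)$ supplies $f(\tau)\geq\lambda\tau^{q-1}$ with $q>\frac{4s+2t}{s+t}$; testing a truncated Aubin--Talenti extremizer of the fractional Sobolev inequality then yields, via a Brezis--Nirenberg type computation, that the least-energy value $m(V_0)$ lies strictly below the critical threshold $\frac{s}{3}S_s^{3/(2s)}$. The lower bound on $q$ is precisely what makes the positive Poisson contribution $\int\phi_w^tw^2$ of lower order than the gain produced by $\lambda\tau^{q-1}$ on an Aubin--Talenti bubble, so the energy remains below the non-compact threshold. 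I would then define a min--max level $c_\varepsilon$ over paths concentrated near $\mathcal{M}/\varepsilon$, verify $c_\varepsilon\to m(V_0)$, and apply a deformation argument to produce a nontrivial critical point $v_\varepsilon$ of $J_\varepsilon$ together with the companion $\phi_{v_\varepsilon}^t$.

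For the concentration and decay, I would pick $y_\varepsilon\in\Lambda/\varepsilon$ near the maximum of $v_\varepsilon$. A concentration--compactness analysis---with vanishing excluded by the strict level estimate and dichotomy by a Brezis--Lieb argument for the critical and Poisson terms---gives, along a subsequence, $v_\varepsilon(\cdot+y_\varepsilon)\to v_0$ strongly in $H^s(\mathbb{R}^3)$, where $v_0$ is a least-energy solution of the limit problem at some $V(x_0)$ with $x_0\in\overline\Lambda$; energy comparison combined with $(V_1)$ forces $V(x_0)=V_0$ and hence $\mathrm{dist}(\varepsilon y_\varepsilon,\mathcal{M})\to 0$. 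Using $(g_3)$ and a Moser-type iteration in the fractional framework I would obtain $L^\infty$ bounds and uniform decay of $v_\varepsilon$ outside large balls, so the penalization becomes inactive on the support of $v_\varepsilon$ and $v_\varepsilon$ solves the original equation. Setting $u_\varepsilon(x)=v_\varepsilon(x/\varepsilon)$ and letting $x_\varepsilon$ denote its maximum point yields the announced concentration. The pointwise bound would then come from a comparison-principle argument: constructing an explicit fractional barrier of the form $W(y)=K(1+|y|^{3+2s})^{-1}$ (for which $(-\Delta)^sW$ has the required decay) and exploiting the favorable sign of the Poisson term $\phi v\geq 0$, then rescaling back to $x_\varepsilon$.

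The principal obstacle is the interplay between the critical exponent and the weakened monotonicity $(g_3)$. The standard approach, assuming $g(\tau)/\tau^3$ non-decreasing, delivers an immediate Nehari identity aligned with the Poisson homogeneity that bounds Palais--Smale sequences and pins down the limit profile; under $(g_3)$ with only the single exponent $q$, both must be recovered by Pohozaev-type arguments combined with the Berestycki--Lions structure. The critical ingredient is to keep the min--max level strictly below $\frac{s}{3}S_s^{3/(2s)}$ despite the positive Poisson contribution, and this is exactly where the bound $q>\frac{4s+2t}{s+t}$ enters in a non-trivial way. Verifying that the penalization is inactive on the entire support of $v_\varepsilon$---not merely on a set of small measure---under these weaker hypotheses is the second delicate point, and demands a careful use of $(g_3)$ in the comparison step.
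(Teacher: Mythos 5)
Your proposal is correct and follows essentially the same route as the paper: reduction to a single nonlocal equation via the $\phi_u^t$ solver, a combined del Pino--Felmer/Byeon--Wang penalization (the paper uses both the truncated $\tilde f$ and the additional penalty $Q_\varepsilon(u)=(\int_{\mathbb{R}^3\setminus\Lambda_\varepsilon}u^2-\varepsilon)_+^2$), a Nehari--Pohozaev/Pohozaev-identity framework to compensate for the weakened monotonicity $(g_3)$, an Aubin--Talenti bubble estimate to keep the min--max level strictly below $\frac{s}{3}\mathcal{S}_s^{3/(2s)}$, a compactness lemma showing PS sequences near the neighborhood $\mathcal{N}_\varepsilon^{d_0}$ converge to translates of limit ground states, and finally Moser iteration together with a fractional polynomial barrier for the decay and for showing the penalization is inactive.
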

We will give some comments on our main result.
\begin{remark}
The hypothesis $(V_1)$ is a special case of the local assumption
\begin{equation*}
\inf_{\Lambda}V(x)<\inf_{\partial\Lambda}V(x)
\end{equation*}
which first introduced by M. del Pino and P. L. Felmer \cite{DF}, because there have not some local priori estimates like Theorem 8.17 in \cite{GT}.
\end{remark}

\begin{remark}
Comparing with the results in \cite{H,HZ,WTXZ,WTXZ1}, the monotone hypothesis $(f_3)$ is weaker even in the case $s=t=1$ ($q-1>\frac{4s+2t}{s+t}-1=2$).
\end{remark}

\begin{remark}
The condition $(V_2)$ is local and the $(AR)$-condition for fractional Schr\"{o}dinger-Poisson system is not satisfied, we need to modify the penalization methods developed by J. Byeon, Z. Q. Wang \cite{BJ,BW} and combine the penalization methods introduced by M. del Pino, P. L. Felmer \cite{DF}, for overcoming the obstacle caused by the non-compactness due to the unboundedness of the domain and the lack of $(AR)$ condition.
\end{remark}

The paper is organized as follows, in Section 2, we give some preliminary results. In Section 3, we prove the existence of positive ground state solutions for "limit problem". In Section 4, we prove the main result Theorem \ref{thm1-1}.

\section{Variational Setting}

In this section, we outline the variational framework for studying problem \eqref{main-0} and list some preliminary Lemma which used later. In the sequel, we denote by $\|\cdot\|_{p}$ the usual norm of the space $L^p(\mathbb{R}^3)$, the letter $c_i$ ($i=1,2,\ldots$) or $C$ denote by some positive constants.

\subsection{Work space stuff}
We define the homogeneous fractional Sobolev space $\mathcal{D}^{\alpha,2}(\mathbb{R}^3)$ as follows
\begin{equation*}
\mathcal{D}^{\alpha,2}(\mathbb{R}^3)=\Big\{u\in L^{2_{\alpha}^{\ast}}(\mathbb{R}^3)\,\,\Big|\,\,|\xi|^{\alpha}(\mathcal{F}u)(\xi)\in L^2(\mathbb{R}^3)\Big\}
\end{equation*}
which is the completion of $C_0^{\infty}(\mathbb{R}^3)$ under the norm
\begin{equation*}
\|u\|_{\mathcal{D}^{\alpha,2}}=\Big(\int_{\mathbb{R}^3}|(-\Delta)^{\frac{\alpha}{2}}u|^2\,{\rm d}x\Big)^{\frac{1}{2}}=\Big(\int_{\mathbb{R}^3}|\xi|^{2\alpha}|(\mathcal{F}u)(\xi)|^2\,{\rm d}\xi\Big)^{\frac{1}{2}}
\end{equation*}
The fractional Sobolev space $H^{\alpha}(\mathbb{R}^3)$ can be described by means of the Fourier transform, i.e.
\begin{equation*}
H^{\alpha}(\mathbb{R}^3)=\Big\{u\in L^2(\mathbb{R}^3)\,\,\Big|\,\,\int_{\mathbb{R}^3}(|\xi|^{2\alpha}|(\mathcal{F}u)(\xi)|^2+|(\mathcal{F}u)(\xi)|^2)\,{\rm d}\xi<+\infty\Big\}.
\end{equation*}
In this case, the inner product and the norm are defined as
\begin{equation*}
(u,v)=\int_{\mathbb{R}^3}(|\xi|^{2\alpha}(\mathcal{F}u)(\xi)\overline{(\mathcal{F}v)(\xi)}+(\mathcal{F}u)(\xi)\overline{(\mathcal{F}v)(\xi)})\,{\rm d}\xi
\end{equation*}
and
\begin{equation*}
\|u\|_{H^{\alpha}}=\bigg(\int_{\mathbb{R}^3}(|\xi|^{2\alpha}|(\mathcal{F}u)(\xi)|^2+|(\mathcal{F}u)(\xi)|^2)\,{\rm d}\xi\bigg)^{\frac{1}{2}}.
\end{equation*}
From Plancherel's theorem we have $\|u\|_2=\|\mathcal{F}u\|_2$ and $\||\xi|^{\alpha}\mathcal{F}u\|_2=\|(-\Delta)^{\frac{\alpha}{2}}u\|_2$. Hence
\begin{equation*}
\|u\|_{H^{\alpha}}=\bigg(\int_{\mathbb{R}^3}(|(-\Delta)^{\frac{\alpha}{2}}u(x)|^2+|u(x)|^2)\,{\rm d}x\bigg)^{\frac{1}{2}},\quad \forall u\in H^{\alpha}(\mathbb{R}^3).
\end{equation*}
We denote $\|\cdot\|$ by $\|\cdot\|_{H^{\alpha}}$ in the sequel for convenience.

In terms of finite differences, the fractional Sobolev space $H^{\alpha}(\mathbb{R}^3)$ also can be defined as follows
\begin{equation*}
H^{\alpha}(\mathbb{R}^3)=\Big\{u\in L^2(\mathbb{R}^3)\,\,\Big|\,\,D_{\alpha}u\in L^2(\mathbb{R}^3)\Big\},\quad |D_{\alpha}u|^2=\int_{\mathbb{R}^3}\frac{|u(x)-u(y)|^2}{|x-y|^{3+2\alpha}}\,{\rm d}y
\end{equation*}
endowed with the natural norm
\begin{equation*}
\|u\|_{H^{\alpha}}=\bigg(\int_{\mathbb{R}^3}|u|^2\,{\rm d}x+\int_{\mathbb{R}^3}|D_{\alpha}u|^2\,{\rm d}x\bigg)^{\frac{1}{2}}.
\end{equation*}

Also, in view of Proposition 3.4 and Proposition 3.6 in \cite{NPV}, we have
\begin{equation}\label{equ2-1}
\|(-\Delta)^{\frac{\alpha}{2}}u\|_2^2=\int_{\mathbb{R}^3}|\xi|^{2\alpha}|(\mathcal{F}u)(\xi)|^2\,{\rm d}\xi=\frac{1}{C_{\alpha}}\int_{\mathbb{R}^3}|D_{\alpha}u|^2\,{\rm d}x.
\end{equation}

We define the Sobolev space $H_{\varepsilon}=\{u\in H^s(\mathbb{R}^3)\,\,|\,\, \int_{\mathbb{R}^3}V(\varepsilon x)u^2\,{\rm d}x<\infty\}$ endowed with the norm
\begin{equation*}
\|u\|_{H_{\varepsilon}}=\Big(\int_{\mathbb{R}^3}(|D_su|^2+V(\varepsilon x)u^2)\,{\rm d}x\Big)^{\frac{1}{2}}.
\end{equation*}

It is well known that (see \cite{LM}) $H^s(\mathbb{R}^3)$ is continuously embedded into $L^r(\mathbb{R}^3)$ for $2\leq r\leq 2_{s}^{\ast}$ ($2_{s}^{\ast}=\frac{6}{3-2s}$).
Obviously, the conclusion also holds for $H_{\varepsilon}$.

\subsection{Formulation of Problem \eqref{main-0}}
It is easily seen that, just performing the change of variables $u(x)\rightarrow u(x/\varepsilon)$ and $\phi(x)\rightarrow \phi(x/\varepsilon)$, and taking $z=x/\varepsilon$, problem \eqref{main-0} can be rewritten as the following equivalent form
\begin{equation}\label{main-2-1}
\left\{
  \begin{array}{ll}
   (-\Delta)^su+V(\varepsilon z)u+\phi u=f(u)+u^{2_s^{\ast}-1} & \hbox{in $\mathbb{R}^3$,} \\
    (-\Delta)^t\phi=u^2,\,\, u>0& \hbox{in $\mathbb{R}^3$}
  \end{array}
\right.
\end{equation}
which will be referred from now on. Observe that if $4s+2t\geq3$, there holds $2\leq\frac{12}{3+2t}\leq\frac{6}{3-2s}$ and thus $H_{\varepsilon}\hookrightarrow L^{\frac{12}{3+2t}}(\mathbb{R}^3)$. Considering $u\in H_{\varepsilon}$, the linear functional $\widetilde{\mathcal{L}}_u:\mathcal{D}^{t,2}(\mathbb{R}^3)\rightarrow\mathbb{R}$ is defined by $\widetilde{\mathcal{L}}_u(v)=\int_{\mathbb{R}^3}u^2v\,{\rm d}x$. Using the Lax-Milgram theorem, there exists a unique $\phi_u^t\in\mathcal{D}^{t,2}(\mathbb{R}^3)$ such that
\begin{align*}
C_s\int_{\mathbb{R}^3\times\mathbb{R}^3}\frac{(\phi_u^t(z)-\phi_u^t(y))(v(z)-v(y))}{|z-y|^{3+2s}}\,{\rm d}y\,{\rm d}z&=\int_{\mathbb{R}^3}(-\Delta)^{\frac{t}{2}}\phi_u^t(-\Delta)^{\frac{t}{2}}v\,{\rm d}z\\
&=\int_{\mathbb{R}^3}u^2v\,{\rm d}z,\quad \forall v\in\mathcal{D}^{t,2}(\mathbb{R}^3),
\end{align*}
that is $\phi_u^t$ is a weak solution of $(-\Delta)^t\phi_u^t=u^2$ and so the representation formula holds
\begin{equation*}
\phi_u^t(x)=c_t\int_{\mathbb{R}^3}\frac{u^2(y)}{|x-y|^{3-2t}}\,{\rm d}y,\quad x\in\mathbb{R}^3,\quad c_t=\pi^{-\frac{3}{2}}2^{-2t}\frac{\Gamma(\frac{3-2t}{2})}{\Gamma(t)}.
\end{equation*}
Substituting $\phi_u^t$ in \eqref{main-2-1}, it reduces to a single fractional Schr\"{o}dinger equation
\begin{equation}\label{R-1}
(-\Delta)^su+V(\varepsilon z)u+\phi_u^tu=f(u)+(u^{+})^{2_s^{\ast}-1}\quad z\in\mathbb{R}^3.
\end{equation}
The solvation of \eqref{R-1} can be looking for the critical points of the associated energy functional $J_{\varepsilon}: H_{\varepsilon}\rightarrow\mathbb{R}$ defined by
\begin{align*}
J_{\varepsilon}(u)&=\frac{1}{2}\int_{\mathbb{R}^3}|D_su|^2\,{\rm d}z+\frac{1}{2}\int_{\mathbb{R}^3}V(\varepsilon z)u^2\,{\rm d}z+\frac{1}{4}\int_{\mathbb{R}^3}\phi_u^tu^2\,{\rm d}z-\int_{\mathbb{R}^3}F(u)\,{\rm d}z\\
&-\frac{1}{2_s^{\ast}}\int_{\mathbb{R}^3}(u^{+})^{2_s^{\ast}}\,{\rm d}z.
\end{align*}

Let us summarize some properties of the function $\phi_u^t$. By using simple computation, it is easy to check the following conclusions.
\begin{lemma}\label{lem2-1}
For every $u\in H_{\varepsilon}$ with $4s+2t\geq3$, define $\Phi(u)=\phi_u^t\in \mathcal{D}^{t,2}(\mathbb{R}^3)$, where $\phi_u^t$ is the unique solution of equation $(-\Delta)^t\phi=u^2$. Then there hold:\\
$(i)$ If $u_n\rightharpoonup u$ in $H_{\varepsilon}$, then $\Phi(u_n)\rightharpoonup\Phi(u)$ in $\mathcal{D}^{t,2}(\mathbb{R}^3)$;\\
$(ii)$ $\Phi(tu)=t^2\Phi(u)$ for any $t\in\mathbb{R}$;\\
$(iii)$ For $u\in H_{\varepsilon}$, one has
\begin{equation*}
\|\Phi(u)\|_{\mathcal{D}^{t,2}}\leq C\|u\|_{\frac{12}{3+2t}}^2\leq C\|u\|_{H_\varepsilon}^2,\quad \int_{\mathbb{R}^3}\Phi(u)u^2\,{\rm d}x\leq C\|u\|_{\frac{12}{3+2t}}^4\leq C\|u\|_{H_\varepsilon}^4,
\end{equation*}
where constant $C$ is independent of $u$;\\
$(iv)$ Let $2s+2t>3$, if $u_n\rightharpoonup u$ in $H_{\varepsilon}$ and $u_n\rightarrow u$ a.e. in $\mathbb{R}^3$, then for any $v\in H_{\varepsilon}$,
\begin{equation*}
\int_{\mathbb{R}^3}\phi_{u_n}^tu_nv\,{\rm d}z\rightarrow\int_{\mathbb{R}^3}\phi_{u}^tuv\,{\rm d}z\quad\text{and}\quad\int_{\mathbb{R}^3}f(u_n)v\,{\rm d}z\rightarrow\int_{\mathbb{R}^3}f(u)v\,{\rm d}z
\end{equation*}
and
\begin{equation*}
\int_{\mathbb{R}^3}(u_n^{+})^{2_s^{\ast}-1}v\,{\rm d}z\rightarrow\int_{\mathbb{R}^3}(u^{+})^{2_s^{\ast}-1}v\,{\rm d}z,
\end{equation*}
and thus $u$ is a solution for problem \eqref{R-1}.
\end{lemma}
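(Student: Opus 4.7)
My plan is to treat the four items in roughly increasing order of difficulty, leveraging the Lax-Milgram characterization of $\Phi(u)=\phi_u^t$ together with the Sobolev embedding $\mathcal{D}^{t,2}(\mathbb{R}^3)\hookrightarrow L^{2_t^{\ast}}(\mathbb{R}^3)$ with $2_t^{\ast}=6/(3-2t)$, and the embedding $H_{\varepsilon}\hookrightarrow L^{12/(3+2t)}(\mathbb{R}^3)$ which holds whenever $4s+2t\ge 3$.

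\textbf{Items (ii) and (iii).} Homogeneity is immediate from uniqueness: both $\Phi(tu)$ and $t^2\Phi(u)$ weakly solve $(-\Delta)^t\phi=(tu)^2$ in $\mathcal{D}^{t,2}$, so they coincide. For (iii), I would test the defining equation against $\phi_u^t$ itself and apply H\"older with exponents $(6/(3+2t),\,2_t^{\ast})$ followed by Sobolev to get
\[
\|\phi_u^t\|_{\mathcal{D}^{t,2}}^2=\int_{\mathbb{R}^3}u^2\phi_u^t\,dx\leq \|u\|_{L^{12/(3+2t)}}^2\|\phi_u^t\|_{L^{2_t^{\ast}}}\leq C\|u\|_{L^{12/(3+2t)}}^2\|\phi_u^t\|_{\mathcal{D}^{t,2}}.
\]
Dividing gives the first inequality, and the second (for $\int\phi_u^tu^2\,dx$) follows because this integral equals $\|\phi_u^t\|_{\mathcal{D}^{t,2}}^2$. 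The comparison with $\|u\|_{H_{\varepsilon}}$ is then the Sobolev embedding just mentioned.

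\textbf{Item (i).} From (iii) the sequence $\Phi(u_n)$ is bounded in $\mathcal{D}^{t,2}$, so along a subsequence $\Phi(u_n)\rightharpoonup\psi$. To identify $\psi=\Phi(u)$, I would fix $v\in C_c^{\infty}(\mathbb{R}^3)$ and pass to the limit in
\[
\int_{\mathbb{R}^3}(-\Delta)^{t/2}\Phi(u_n)(-\Delta)^{t/2}v\,dx=\int_{\mathbb{R}^3}u_n^2v\,dx.
\]
The left side converges to $\int(-\Delta)^{t/2}\psi(-\Delta)^{t/2}v\,dx$ by weak convergence, and Rellich-Kondrachov gives $u_n\to u$ in $L^2(\mathrm{supp}\,v)$, hence the right side tends to $\int u^2v\,dx$. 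Lax-Milgram uniqueness forces $\psi=\Phi(u)$, and a standard subsequence argument upgrades this to convergence of the full sequence.

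\textbf{Item (iv) --- the main obstacle.} The hypothesis $2s+2t>3$ is used precisely to ensure the strict inequality $12/(3+2t)<2_s^{\ast}$, so that Rellich delivers $u_n\to u$ strongly in $L^{12/(3+2t)}_{\mathrm{loc}}$. For the Poisson term I would write
\[
\phi_{u_n}^tu_nv-\phi_u^tuv=(\phi_{u_n}^t-\phi_u^t)uv+\phi_{u_n}^t(u_n-u)v.
\]
The first piece vanishes in the limit: (i) plus Sobolev give $\phi_{u_n}^t\rightharpoonup\phi_u^t$ in $L^{2_t^{\ast}}$, and H\"older yields $uv\in L^{6/(3+2t)}=(L^{2_t^{\ast}})'$. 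The second is estimated by
\[
\Bigl|\int\phi_{u_n}^t(u_n-u)v\,dx\Bigr|\leq \|\phi_{u_n}^t\|_{L^{2_t^{\ast}}}\|u_n-u\|_{L^{12/(3+2t)}}\|v\|_{L^{12/(3+2t)}},
\]
and split by a tightness argument: given $\eta>0$, choose $R$ so that $\|v\|_{L^{12/(3+2t)}(|x|\ge R)}<\eta$ to dominate the tail uniformly in $n$, then invoke Rellich on $B_R$ for large $n$. For the $f$-term I would exploit the subcritical behaviour of $f$ (namely $f(\tau)=o(\tau)$ at $0$ by $(f_0)$ and $f(\tau)=o(\tau^{2_s^{\ast}-1})$ at infinity inferred from $(g_0)$--$(g_1)$, giving bounds of the form $|f(\tau)|\le \delta|\tau|+C_{\delta}|\tau|^{2_s^{\ast}-1}$) and conclude via Vitali, combining $L^p_{\mathrm{loc}}$ compactness with the same tail control on $v$. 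Finally, $(u_n^{+})^{2_s^{\ast}-1}$ is bounded in $L^{(2_s^{\ast})'}$ and converges a.e.\ to $(u^{+})^{2_s^{\ast}-1}$, hence weakly in $L^{(2_s^{\ast})'}$; pairing with $v\in H_{\varepsilon}\hookrightarrow L^{2_s^{\ast}}$ yields the third convergence. With these three limits in hand, passing to the limit in the weak formulation of \eqref{R-1} for $u_n$ shows that $u$ itself solves \eqref{R-1}.
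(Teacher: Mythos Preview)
Your proposal is correct and fills in the standard details; the paper itself gives no proof of this lemma, merely stating before it that ``by using simple computation, it is easy to check the following conclusions.'' One minor inaccuracy: the strict inequality $12/(3+2t)<2_s^{\ast}$ is in fact equivalent to $4s+2t>3$, not $2s+2t>3$, so the latter hypothesis is stronger than what your argument actually uses---but since $2s+2t>3$ is the standing assumption of the paper (Theorem~\ref{thm1-1}), this does not affect the validity.
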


In the following, we collect some useful Lemma. We define
\begin{equation*}
\mu_{\infty}=\lim_{R\rightarrow\infty}\limsup_{n\rightarrow\infty}\int_{\{|x|>R\}}|D_su_n|^2\,{\rm d}z\quad \nu_{\infty}=\lim_{R\rightarrow\infty}\limsup_{n\rightarrow\infty}\int_{\{|x|>R\}}|u_n|^{2_s^{\ast}}\,{\rm d}z.
\end{equation*}

\begin{lemma}\label{lem2-2}(\cite{PP,ZZX})
Let $\{u_n\}\subset H^s(\mathbb{R}^3)$ be such that $u_n\rightharpoonup u$ in $\mathcal{D}^{s,2}(\mathbb{R}^3)$, $|D_su_n|^2\rightharpoonup\mu$ and $|u_n|^{2_s^{\ast}}\rightharpoonup\nu$ weakly$-\ast$ in $\mathcal{M}(\mathbb{R}^3)$ as $n\rightarrow\infty$. Here $\mathcal{M}(\mathbb{R}^3)$ is the space of finite nonnegative Borel measures on $\mathbb{R}^3$. Then\\
$(i)$ there exist a (at most countable) set of distinct points $\{x_j\}_{j\in J}\subset\mathbb{R}^3$, $\mu_j\geq0$, $\nu_j\geq0$ with $\mu_j+\nu_j>0$ ($j\in J$) such that
\begin{equation*}
\mu\geq|D_su|^2+\sum_{j\in J}\mu_j\delta_{x_j}\quad \nu=|u|^{2_s^{\ast}}+\sum_{j\in J}\nu_j\delta_{x_j},\quad \mu_j=\mu(\{x_j\}),\,\, \nu_j=\nu(\{x_j\});
\end{equation*}
$(ii)$
Then $\mu_{\infty}$ and $\nu_{\infty}$ are well defined satisfy
\begin{equation*}
\limsup_{n\rightarrow\infty}\int_{\mathbb{R}^3}|D_su_n|^2\,{\rm d}z=\int_{\mathbb{R}^3}\,{\rm d}\mu+\mu_{\infty}\quad \limsup_{n\rightarrow\infty}\int_{\mathbb{R}^3}|u_n|^{2_s^{\ast}}\,{\rm d}z=\int_{\mathbb{R}^3}\,{\rm d}\nu+\nu_{\infty};
\end{equation*}
$(iii)$
\begin{equation*}
\nu_j\leq(\mathcal{S}_s^{-1}\mu_j)^{\frac{2_s^{\ast}}{2}}\,\, \text{for any}\,\, j\in J\,\, \text{and}\,\,\nu_{\infty}\leq(\mathcal{S}_s^{-1}\mu_{\infty})^{\frac{2_s^{\ast}}{2}}.
\end{equation*}
\end{lemma}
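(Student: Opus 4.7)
The plan is to follow the classical strategy of P.\,L. Lions' second concentration--compactness lemma, adapted to the fractional setting. \textbf{Step 1 (Reduction to $u=0$).} I would set $v_n = u_n - u$, so that $v_n \rightharpoonup 0$ in $\mathcal{D}^{s,2}(\mathbb{R}^3)$, and invoke a fractional Br\'ezis--Lieb type identity to obtain $|D_s u_n|^2 - |D_s v_n|^2 \to |D_s u|^2$ and $|u_n|^{2_s^{\ast}} - |v_n|^{2_s^{\ast}} \to |u|^{2_s^{\ast}}$ in $L^1(\mathbb{R}^3)$. It therefore suffices to prove the atomic/reverse-H\"older statements for $v_n$ and to add back the absolutely continuous contributions $|D_s u|^2$ and $|u|^{2_s^{\ast}}$ at the end.

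\textbf{Step 2 (Reverse H\"older on measures and atomic decomposition).} For any $\varphi \in C_0^\infty(\mathbb{R}^3)$, I apply the fractional Sobolev inequality to $\varphi v_n$, namely $\mathcal{S}_s \|\varphi v_n\|_{2_s^\ast}^2 \leq C_s^{-1} [\varphi v_n]_s^2$. The crux is to show $[\varphi v_n]_s^2 \to C_s \int \varphi^2\,d\mu$. Using the pointwise splitting $\varphi(x)v_n(x)-\varphi(y)v_n(y) = \varphi(y)(v_n(x)-v_n(y)) + v_n(x)(\varphi(x)-\varphi(y))$, one writes $[\varphi v_n]_s^2 = \int \varphi^2 |D_s v_n|^2\,dx + R_n$, where $R_n$ contains a quadratic-in-$\varphi$ term of the form $\int v_n^2 |D_s\varphi|^2\,dx$ (vanishing by Rellich--Kondrachov because $v_n \to 0$ in $L^2_{\mathrm{loc}}$ and $\varphi$ has compact support) together with a mixed cross term controlled by Cauchy--Schwarz via the geometric mean of the other two. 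Passing to the limit yields $(\int |\varphi|^{2_s^\ast}\,d\nu)^{2/2_s^\ast} \leq \mathcal{S}_s^{-1} \int \varphi^2\,d\mu$ for every $\varphi \in C_0^\infty$, and a standard measure-theoretic lemma (any $\nu$ obeying such a reverse H\"older bound against $\mu$ is purely atomic, supported on at most countably many points) delivers (i).

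\textbf{Step 3 (Mass at infinity).} For (ii) and the $\nu_\infty$-part of (iii), I would introduce cutoffs $\psi_R \in C^\infty(\mathbb{R}^3,[0,1])$ with $\psi_R \equiv 0$ on $B_R$ and $\psi_R \equiv 1$ on $\mathbb{R}^3 \setminus B_{2R}$, satisfying $\|(-\Delta)^{s/2}\psi_R\|_\infty = O(R^{-s})$. Applying the fractional Sobolev inequality to $\psi_R u_n$, repeating the commutator analysis of Step~2 to see that the remainder is negligible, and then sending first $n \to \infty$ and afterwards $R \to \infty$, one extracts the existence of $\mu_\infty, \nu_\infty$, together with the bound $\nu_\infty \leq (\mathcal{S}_s^{-1}\mu_\infty)^{2_s^\ast/2}$. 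The mass identities in (ii) follow by splitting the integrals against a partition of unity $1 = \eta_R + (1-\eta_R)$ and monitoring each piece as $R \to \infty$.

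The hardest step is the commutator estimate in Step~2: unlike the local case $s=1$, where $\nabla(\varphi v)$ splits cleanly, the fractional seminorm $[\varphi v_n]_s$ does not obey a Leibniz rule and one must unfold the full double integral. The decomposition above works, but the mixed cross term $\int\!\!\int \varphi(y)(v_n(x)-v_n(y)) v_n(x)(\varphi(x)-\varphi(y)) |x-y|^{-3-2s}\,dx\,dy$ requires a careful Cauchy--Schwarz argument combined with the local strong $L^2$ convergence of $v_n$ and the uniform bound on $[v_n]_s$; verifying that every such remainder vanishes as $n\to\infty$ is what ultimately identifies the limit $C_s \int \varphi^2\,d\mu$ and drives the whole proof.
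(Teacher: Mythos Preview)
The paper does not supply its own proof of this lemma; it is quoted as a known result with citations to \cite{PP,ZZX} (Palatucci--Pisante and Zhang--Zhang--Xiang), so there is no in-paper argument to compare against. Your outline is the standard route those references take: reduce to the case $u=0$ via Br\'ezis--Lieb, derive the reverse H\"older inequality $\bigl(\int|\varphi|^{2_s^\ast}\,d\nu\bigr)^{2/2_s^\ast}\le \mathcal{S}_s^{-1}\int\varphi^2\,d\mu$ by applying the Sobolev inequality to $\varphi v_n$ and controlling the nonlocal commutator, then invoke the measure-theoretic lemma that forces $\nu$ to be atomic, and finally handle the mass at infinity with exterior cutoffs. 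Your identification of the commutator estimate as the delicate point is accurate, and the decomposition you describe (main term $\int\varphi^2|D_sv_n|^2$, quadratic remainder $\int v_n^2|D_s\varphi|^2$ killed by $L^2_{\mathrm{loc}}$ compactness, cross term handled by Cauchy--Schwarz) is exactly how \cite{PP} proceeds. Nothing is missing from your sketch.
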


\begin{proposition}\label{pro2-1} (\cite{Teng})
Let $\{u_n\}$ be a bounded sequence in $H^s(\mathbb{R}^3)$. If
\begin{equation*}
\lim_{n\rightarrow\infty}\sup_{y\in\mathbb{R}^3}\int_{B_R(y)}|u_n|^{2_s^{\ast}}\,{\rm d}x=0,
\end{equation*}
where $R$ is a positive number, then $u_n\rightarrow0$ in $L^{2_s^{\ast}}(\mathbb{R}^3)$ as $n\rightarrow\infty$.
\end{proposition}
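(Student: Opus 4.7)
The plan is to reduce the problem to establishing a single global integral inequality
\begin{equation*}
\int_{\mathbb{R}^3} |u_n|^{2_s^{\ast}}\,{\rm d}z \leq C\,\|u_n\|_{H^s}^{2}\left(\sup_{y\in\mathbb{R}^3}\int_{B_{R'}(y)}|u_n|^{2_s^{\ast}}\,{\rm d}z\right)^{(2_s^{\ast}-2)/2_s^{\ast}}
\end{equation*}
for some $R'$ comparable to $R$; once this is in hand, the boundedness of $\{u_n\}$ in $H^s$ combined with the vanishing hypothesis immediately gives $\int_{\mathbb{R}^3}|u_n|^{2_s^{\ast}}\,{\rm d}z\to 0$, since $(2_s^{\ast}-2)/2_s^{\ast}>0$.

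To derive this inequality, I would partition $\mathbb{R}^3$ into essentially disjoint cubes $\{Q_i\}$ of a fixed side length $R$. Each cube sits inside a ball of radius comparable to $R$, so $\int_{Q_i}|u_n|^{2_s^{\ast}}\,{\rm d}z\leq \sup_{y}\int_{B_{R'}(y)}|u_n|^{2_s^{\ast}}\,{\rm d}z$ uniformly in $i$. The key step is the trivial factoring
\begin{equation*}
\|u_n\|_{L^{2_s^{\ast}}(Q_i)}^{2_s^{\ast}} = \|u_n\|_{L^{2_s^{\ast}}(Q_i)}^{2}\cdot\|u_n\|_{L^{2_s^{\ast}}(Q_i)}^{2_s^{\ast}-2},
\end{equation*}
combined with the local fractional Sobolev embedding $H^s(Q_i)\hookrightarrow L^{2_s^{\ast}}(Q_i)$, which lets us bound $\|u_n\|_{L^{2_s^{\ast}}(Q_i)}^{2}\leq C\|u_n\|_{H^s(Q_i)}^{2}$ with a constant independent of $i$ (obtained by extending $u_n|_{Q_i}$ to $\mathbb{R}^3$ via a bounded extension operator and invoking the global fractional Sobolev inequality, uniformity in $i$ following from translation invariance of the partition). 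This produces a cubewise estimate which, upon summation in $i$, yields the desired global inequality because $\sum_i \|u_n\|_{L^2(Q_i)}^{2}=\|u_n\|_{L^2}^{2}$ and the Gagliardo seminorms are subadditive across the partition, i.e.\ $\sum_i\iint_{Q_i\times Q_i}\frac{|u_n(x)-u_n(y)|^2}{|x-y|^{3+2s}}\,{\rm d}x\,{\rm d}y\leq \iint_{\mathbb{R}^6}\frac{|u_n(x)-u_n(y)|^2}{|x-y|^{3+2s}}\,{\rm d}x\,{\rm d}y$, giving $\sum_i\|u_n\|_{H^s(Q_i)}^{2}\leq \|u_n\|_{H^s}^{2}$.

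The main obstacle I expect is ensuring uniformity of the local Sobolev constant across all cubes $Q_i$; this is resolved by exploiting the translation invariance of the partition and using a fixed extension operator for the reference cube $[0,R]^3$, so the constant depends only on $R$ and $s$. The remainder of the argument is mostly bookkeeping: careful summation over the partition together with the subadditivity of the Gagliardo seminorm, which fortunately points in the correct direction for our purposes. An alternative route via concentration--compactness (Lemma~\ref{lem2-2}) would force $\nu\equiv 0$ but would not automatically rule out mass loss at infinity, so the direct covering-and-interpolation argument is preferable.
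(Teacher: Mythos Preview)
The paper does not supply its own proof of this proposition; it is quoted directly from \cite{Teng} without argument. Your covering-and-interpolation scheme is the standard route to such a critical-exponent vanishing lemma and is correct as outlined: the cubewise factoring
\[
\|u_n\|_{L^{2_s^{\ast}}(Q_i)}^{2_s^{\ast}}=\|u_n\|_{L^{2_s^{\ast}}(Q_i)}^{2}\,\|u_n\|_{L^{2_s^{\ast}}(Q_i)}^{2_s^{\ast}-2},
\]
the uniform local embedding $H^{s}(Q_i)\hookrightarrow L^{2_s^{\ast}}(Q_i)$ (uniformity via translation invariance and a fixed extension operator on the reference cube), and the subadditivity $\sum_i\|u_n\|_{H^{s}(Q_i)}^{2}\leq\|u_n\|_{H^{s}(\mathbb{R}^3)}^{2}$ combine exactly as you describe to yield the global inequality, after which boundedness in $H^s$ plus the vanishing hypothesis finish the proof. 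No gaps; your remark that the concentration--compactness route would leave mass-at-infinity unaddressed is also well taken.
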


\begin{proposition}\label{pro2-3}
Let $\{u_k\}\subset \mathcal{D}^{s,2}(\mathbb{R}^3)$ be a bounded sequence such that $u_k\rightharpoonup0$ in $\mathcal{D}^{s,2}(\mathbb{R}^3)$. Suppose that there exists a bounded open set $Q\subset\mathbb{R}^3$ and a positive number $\gamma>0$ such that
\begin{equation}\label{equ2-6}
\int_{Q}|u_k|^{2_s^{\ast}}\,{\rm d}x\geq\gamma>0.
\end{equation}
Moreover, suppose that
\begin{equation}\label{equ2-7}
(-\Delta)^su_k=|u_k|^{2_s^{\ast}-2}u_k-\chi_k\quad x\in\mathbb{R}^3,
\end{equation}
where $\chi_k\in (H^s(\mathbb{R}^3))'$, and $|\langle\chi_k,\varphi\rangle|\leq\varepsilon_k\|\varphi\|$ for any $\varphi\in C_0^{\infty}(V)$, where $V$ is an open neighborhood of $Q$ and $\varepsilon_k\rightarrow0$ as $k\rightarrow\infty$. Then there exist a sequence of points $\{z_k\}\in\mathbb{R}^3$ and a sequence of positive numbers $\{\sigma_k\}$ such that $v_k(x)=\sigma_k^{\frac{3-2s}{2}}u_k(\sigma_k x+z_k)$ converges weakly in $\mathcal{D}^{s,2}(\mathbb{R}^3)$ to a nontrivial solution $v$ of
\begin{equation}\label{equ2-8}
(-\Delta)^sv=|v|^{2_s^{\ast}-2}v\quad x\in\mathbb{R}^3.
\end{equation}
Moreover, $z_k\rightarrow z\in \overline{Q}$ and $\sigma_k\rightarrow0$ as $k\rightarrow\infty$.
\end{proposition}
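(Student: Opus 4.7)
The plan is to extract a concentration point of $\{u_k\}$ inside $\overline{Q}$ and rescale around it to see a non-trivial bubble. Since $u_k\rightharpoonup 0$ in $\mathcal{D}^{s,2}(\mathbb{R}^3)$, Lemma \ref{lem2-2} applied to $\{u_k\}$ gives a family of atoms $\{x_j,\mu_j,\nu_j\}_{j\in J}$ with $\mu=\sum_j\mu_j\delta_{x_j}$, $\nu=\sum_j\nu_j\delta_{x_j}$ and $\nu_j\leq(\mathcal{S}_s^{-1}\mu_j)^{2_s^{\ast}/2}$. Combining \eqref{equ2-6} with the weak-$\ast$ convergence of $|u_k|^{2_s^{\ast}}$ to $\nu$ yields $\nu(\overline{Q})\geq\gamma>0$, so at least one atom $x^{\ast}\in\overline{Q}$ exists. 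Pick $\delta>0$ so small that $\overline{B_{2\delta}(x^{\ast})}\subset V$ contains no other atom, and test \eqref{equ2-7} against $u_k\varphi$ for $\varphi\in C_0^{\infty}(B_{2\delta}(x^{\ast}))$ with $\varphi\equiv 1$ on $B_\delta(x^{\ast})$: the $\chi_k$-contribution is bounded by $\varepsilon_k\|u_k\varphi\|\to 0$, and the standard fractional localization argument (cf.\ \cite{PP}) converts the identity into $\mu^{\ast}=\nu^{\ast}$ at the atom. Combined with Lemma \ref{lem2-2}(iii) this forces $\nu^{\ast}\geq\mathcal{S}_s^{3/(2s)}$.

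Fix $\gamma_0\in(0,\mathcal{S}_s^{3/(2s)}/2)$ and, for every $k$, consider the concentration function
\[
\phi_k(\sigma)=\sup_{y\in\overline{B_\delta(x^{\ast})}}\int_{B_\sigma(y)}|u_k|^{2_s^{\ast}}\,{\rm d}x,
\]
which is continuous in $\sigma$, vanishes at $\sigma=0$, and satisfies $\liminf_k\phi_k(\delta)\geq\nu^{\ast}>\gamma_0$. For $k$ large, choose $\sigma_k\in(0,\delta)$ with $\phi_k(\sigma_k)=\gamma_0$ and $z_k\in\overline{B_\delta(x^{\ast})}$ with $\int_{B_{\sigma_k}(z_k)}|u_k|^{2_s^{\ast}}\geq\gamma_0/2$. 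Since $\nu$ has no atom in $\overline{B_\delta(x^{\ast})}\setminus\{x^{\ast}\}$, a finite covering of $\overline{B_\delta(x^{\ast})}\setminus B_\eta(x^{\ast})$ by balls of radius $\sigma_k$ shows that the sup defining $\phi_k(\sigma_k)$ is attained inside $\overline{B_\eta(x^{\ast})}$ for every $\eta>0$ and all large $k$, hence $z_k\to x^{\ast}\in\overline{Q}$. If instead $\sigma_k\geq c>0$ along a subsequence, then taking $y=x^{\ast}$ one gets $\phi_k(c)\geq\int_{B_c(x^{\ast})}|u_k|^{2_s^{\ast}}\to\nu(B_c(x^{\ast}))\geq\nu^{\ast}>\gamma_0$, contradicting $\phi_k(c)\leq\phi_k(\sigma_k)=\gamma_0$; thus $\sigma_k\to 0$. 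The $\mathcal{D}^{s,2}$-invariance of the dilation makes $v_k(x)=\sigma_k^{(3-2s)/2}u_k(\sigma_k x+z_k)$ bounded in $\mathcal{D}^{s,2}$, with $\int_{B_1(0)}|v_k|^{2_s^{\ast}}\geq\gamma_0/2$.

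Extract $v_k\rightharpoonup v$ in $\mathcal{D}^{s,2}$. For $\varphi\in C_0^{\infty}(\mathbb{R}^3)$ the pull-back $\psi(y)=\sigma_k^{-(3-2s)/2}\varphi((y-z_k)/\sigma_k)$ has $\|\psi\|\leq\|\varphi\|$ (the $\mathcal{D}^{s,2}$ part is scale invariant and the $L^2$ part is $\sigma_k^{2s}\|\varphi\|_2^2\to 0$) and support in $z_k+\sigma_k\,\mathrm{supp}\,\varphi\subset V$ for $k$ large, so the rescaled error $\tilde\chi_k$ defined by $\langle\tilde\chi_k,\varphi\rangle=\langle\chi_k,\psi\rangle$ obeys $|\langle\tilde\chi_k,\varphi\rangle|\leq\varepsilon_k\|\varphi\|\to 0$, and a change of variables produces the rescaled equation $(-\Delta)^sv_k=|v_k|^{2_s^{\ast}-2}v_k-\tilde\chi_k$. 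Suppose for contradiction that $v\equiv 0$: Lemma \ref{lem2-2} applied to $\{v_k\}$, together with the mass $\geq\gamma_0/2$ in $\overline{B_1(0)}$, produces an atom $\tilde x\in\overline{B_1(0)}$ with $\tilde\nu>0$, and the same localization argument gives $\tilde\mu=\tilde\nu\geq\mathcal{S}_s^{3/(2s)}$. However, for any fixed $\tilde y\in\mathbb{R}^3$ and $k$ large, $B_{\sigma_k}(z_k+\sigma_k\tilde y)\subset B_\delta(x^{\ast})$, so
\[
\int_{B_1(\tilde y)}|v_k|^{2_s^{\ast}}\,{\rm d}x=\int_{B_{\sigma_k}(z_k+\sigma_k\tilde y)}|u_k|^{2_s^{\ast}}\,{\rm d}x\leq\phi_k(\sigma_k)=\gamma_0,
\]
forcing $\tilde\nu\leq\gamma_0<\mathcal{S}_s^{3/(2s)}$, a contradiction. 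Hence $v\neq 0$. Finally, passing to the limit in the rescaled equation against any $\varphi\in C_0^{\infty}(\mathbb{R}^3)$ — the $\tilde\chi_k$-term vanishes as above, and the a.e.\ convergence of $v_k$ together with boundedness in $L^{2_s^{\ast}}$ gives $|v_k|^{2_s^{\ast}-2}v_k\rightharpoonup|v|^{2_s^{\ast}-2}v$ in $L^{2_s^{\ast}/(2_s^{\ast}-1)}$ — shows that $v$ solves \eqref{equ2-8}.

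The main technical obstacle is the localization step used to equate $\mu_j$ and $\nu_j$ at an atom: the non-locality of $(-\Delta)^s$ precludes a direct Leibniz rule, so one must estimate $(-\Delta)^{s/2}(u\varphi)-\varphi(-\Delta)^{s/2}u$ via tail integrals involving $\varphi(x)-\varphi(y)$ and control them using the uniform $\mathcal{D}^{s,2}$-bound together with the compactness $H^s_{\mathrm{loc}}\hookrightarrow L^p_{\mathrm{loc}}$ for $p<2_s^{\ast}$. A second delicate point is arranging the scale so that both $\sigma_k\to 0$ and $z_k\to x^{\ast}\in\overline{Q}$, which is precisely what guarantees that the rescaled test functions land in $V$ and makes the error $\tilde\chi_k$ vanish in the limit.
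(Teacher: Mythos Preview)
Your approach is the same L\'evy--concentration--function / rescaling scheme used in the paper, and the non-triviality argument (an atom of the rescaled sequence would carry mass $\ge\mathcal{S}_s^{3/2s}$, contradicting the chosen level $<\mathcal{S}_s^{3/2s}$) is exactly the paper's. Two points deserve comment.

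First, the paper takes the concentration function over all of $\overline{Q}$, so $z_k\in\overline{Q}$ for free and compactness yields $z_k\to z\in\overline{Q}$ with no further work. Your choice to work inside $\overline{B_\delta(x^\ast)}$ forces you to prove $z_k\to x^\ast$, and the ``finite covering of $\overline{B_\delta(x^\ast)}\setminus B_\eta(x^\ast)$ by balls of radius $\sigma_k$'' argument does not work as written: since $\sigma_k\to 0$ the number of balls blows up, so no useful bound follows. The correct order is to establish $\sigma_k\to 0$ first (your argument for this is fine), and then argue directly: if along a subsequence $z_k\to z\neq x^\ast$, pick $r>0$ with $x^\ast\notin\overline{B_r(z)}$; for $k$ large $B_{\sigma_k}(z_k)\subset\overline{B_r(z)}$, so
\[
\frac{\gamma_0}{2}\le\int_{B_{\sigma_k}(z_k)}|u_k|^{2_s^\ast}\,{\rm d}x\le\int_{\overline{B_r(z)}}|u_k|^{2_s^\ast}\,{\rm d}x,
\]
while the weak-$\ast$ convergence together with $\nu(\overline{B_r(z)})=0$ (you chose $\delta$ so that the only atom in $\overline{B_\delta(x^\ast)}$ is $x^\ast$) forces the right-hand side to $0$.

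Second, the preliminary localization at the $u_k$ level to get $\nu^\ast\ge\mathcal{S}_s^{3/2s}$ is not needed; the paper only uses $\nu_{j_0}>0$ to force $\sigma_k\to 0$, and the threshold $\mathcal{S}_s^{3/2s}$ appears only in the localization at the $v_k$ level. You can simply take $\gamma_0\in\bigl(0,\min\{\nu^\ast,\mathcal{S}_s^{3/2s}\}\bigr)$ and skip that first localization entirely, halving the technical work.
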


\begin{proof}
Since $\{u_k\}$ is bounded in $\mathcal{D}^{s,2}(\mathbb{R}^3)$ and $u_k\rightharpoonup0$ in $\mathcal{D}^{s,2}(\mathbb{R}^3)$, by Phrokorov¡¯s theorem (Theorem 8.6.2 in \cite{B}), there exist $\mu,\nu\in\mathcal{M}(\mathbb{R}^3)$ such that
\begin{equation*}
|D_su_k|^2\rightharpoonup\mu\,\,\text{and}\,\, |u_k|^{2_s^{\ast}}\rightharpoonup\nu\,\, \text{weakly-}\ast\,\, \text{in}\,\, \mathcal{M}(\mathbb{R}^3)\,\,\text{as}\,\, k\rightarrow\infty.
\end{equation*}
By Lemma \ref{lem2-2}, there exist an at most countable index set $J$, sequence $\{x_j\}_{j\in J}\subset\mathbb{R}^3$ and $\{\nu_j\}\subset(0,\infty)$ such that
\begin{equation*}
\nu=\sum_{j\in J}\nu_j\delta_{x_j}.
\end{equation*}
We claim that there is at least one $j_0\in J$ such that $x_{j_0}\in \overline{Q}$ with $\nu_{j_0}>0$. If not, for all $j\in J$, $x_j\not\in \overline{Q}$ with $\nu_{j}>0$, then
\begin{equation*}
\int_{\mathbb{R}^3}|u_k|^{2_s^{\ast}}\varphi(x)\,{\rm d}x\rightarrow\int_{\mathbb{R}^3}\sum_{j\in J}\nu_j\delta_{x_j}\varphi\,{\rm d}x=\sum_{j\in J}\nu_j\varphi(x_j), \quad \forall \varphi\in C_0(\mathbb{R}^3).
\end{equation*}
Taking ${\rm supp}(\varphi)= \overline{Q}$, we see that $\int_{Q}|u_k|^{2_s^{\ast}}\,{\rm d}x\rightarrow0$, contradicts with \eqref{equ2-6}. Thus, the claim is true.

We define the L\'{e}vy concentration function
\begin{equation*}
Q_k(r)=\sup_{x\in\overline{Q}}\int_{B_r(x)}|u_k(z)|^{2_s^{\ast}}\,{\rm d}z,
\end{equation*}
then $Q_k$ is a non-decreasing and bounded function. Fixing a small $\tau\in(0,\mathcal{S}_s^{\frac{3}{2s}})$, we can find $\sigma_k:=\sigma_k(\tau)\in\mathbb{R}_{+}$, $z_k\in \overline{Q}$ such that
\begin{equation*}
\int_{B_{\sigma_k}(z_k)}|u_k|^{2_s^{\ast}}\,{\rm d}z=Q_k(\sigma_k)=\tau.
\end{equation*}
Set $v_k(x)=\sigma_k^{\frac{3-2s}{2}}u_k(\sigma_kx+z_k)$, we have that
\begin{equation*}
\widetilde{Q}_k(r)=:\sup_{x\in\bar{Q}_k}\int_{B_r(x)}|v_k|^{2_s^{\ast}}\,{\rm d}z=\sup_{x\in\bar{Q}}\int_{B_{\sigma_k r}(x)}|u_k(z)|^{2_s^{\ast}}\,{\rm d}z=Q_k(\sigma_k r),
\end{equation*}
where $\bar{Q}_k=\{x\in\mathbb{R}^3\,\,|\,\, \sigma_kx+z_k\in\overline{Q}\}$. Hence, we obtain that
\begin{equation*}
\widetilde{Q}_k(1)=\tau=Q_k(\sigma_k)=\int_{B_{\sigma_k}(z_k)}|u_k(z)|^{2_s^{\ast}}\,{\rm d}z=\int_{B_1(0)}|v_k(z)|^{2_s^{\ast}}\,{\rm d}z.
\end{equation*}
Now, we prove that there is a small $\tau_0\in(0,\mathcal{S}_s^{\frac{3}{2s}})$ such that $\sigma_k(\tau_0)\rightarrow0$ as $k\rightarrow\infty$. Otherwise, for any $\varepsilon>0$, there exists $r_{\varepsilon}>0$ such that $\sigma_k(\varepsilon)>r_{\varepsilon}$. Hence, for any $x\in\bar{\Omega}$, there holds
\begin{equation*}
\int_{B_{r_{\varepsilon}}(x)}|u_k(z)|^{2_s^{\ast}}\,{\rm d}z\leq\sup_{x\in\overline{Q}}\int_{B_{\sigma_k(\varepsilon)}(x)}|u_k(z)|^{2_s^{\ast}}\,{\rm d}z=Q_k(\sigma_k(\varepsilon))=\varepsilon.
\end{equation*}
Furthermore,
\begin{equation*}
\nu_{j_0}\leq\int_{B_{r_{\varepsilon}}(x_{j_0})}|u_k(z)|^{2_s^{\ast}}\,{\rm d}z+o_k(1)\leq\varepsilon+o_k(1),\quad \forall\varepsilon>0.
\end{equation*}
Let $k\rightarrow+\infty$ and then $\varepsilon\rightarrow0$, we get $\nu_{j_0}\leq0$, which achieves a contradiction. For the above $\tau_0$, we still denote $\sigma_k:=\sigma_k(\tau_0)$ and the corresponding sequence $z_k\in\overline{Q}$. Thus $v_k(x)=\sigma_k^{\frac{3-2s}{2}}u_k(\sigma_kx+z_k)$ satisfies
\begin{equation}\label{equ2-9}
\widetilde{Q}_k(1)=\int_{B_1(0)}|v_k|^{2_s^{\ast}}\,{\rm d}z=\tau_0>0.
\end{equation}

Note that
\begin{equation*}
\int_{\mathbb{R}^3}|D_sv_k|^2\,{\rm d}z=\int_{\mathbb{R}^3}|D_su_k|^2\,{\rm d}z,
\end{equation*}
by the boundedness of $\{u_k\}$ in $\mathcal{D}^{s,2}(\mathbb{R}^3)$, up to a subsequence, we may assume that there exists $v\in\mathcal{D}^{s,2}(\mathbb{R}^3)$ such that $v_k\rightharpoonup v$ in $\mathcal{D}^{s,2}(\mathbb{R}^3)$.

For any $\phi\in C_0^{\infty}(\mathbb{R}^3)$, denote $\phi_k(x)=\phi((x-z_k)/\sigma_k)$. By the fact that $z_k\in \overline{Q}$ and $\sigma_k\rightarrow0$, we see that for $k$ large enough, ${\rm supp}\phi_k\subset B_{\sigma_k}(z_k)\subset V$, then $\phi_k\in C_0^{\infty}(V)$. From \eqref{equ2-7}, we have that
\begin{align*}
&\int_{\mathbb{R}^3}\frac{(v_k(z)-v_k(y))(\phi(z)-\phi(y))}{|z-y|^{3+2s}}\,{\rm d}y\,{\rm d}z-\int_{\mathbb{R}^3}|v_k|^{2_s^{\ast}-2}v_k\phi\,{\rm d}z\\
&=\int_{\mathbb{R}^3}\frac{(u_k(z)-u_k(y))(\phi_k(z)-\phi_k(y))}{|z-y|^{3+2s}}\,{\rm d}y\,{\rm d}z-\int_{\mathbb{R}^3}|u_k|^{2_s^{\ast}-2}u_k\phi_k\,{\rm d}z\\
&=o(1)\|\varphi_k\|=o(1)\|\varphi\|_{\mathcal{D}^{s,2}}+o(1).
\end{align*}
Thus, $v$ is a solution of equation \eqref{equ2-8}. Next, we will prove that $v$ is nontrivial. By virtue of \eqref{equ2-9}, we only need to show that
\begin{equation}\label{equ2-10}
\int_{B_1(0)}|v_k|^{2_s^{\ast}}\,{\rm d}z\rightarrow\int_{B_1(0)}|v|^{2_s^{\ast}}\,{\rm d}z
\end{equation}
If \eqref{equ2-10} holds true, from \eqref{equ2-9}, we know that
\begin{equation*}
\int_{B_1(0)}|v|^{2_s^{\ast}}\,{\rm d}z=\tau_0>0
\end{equation*}
which implies that $v$ is nontrivial.

By the boundedness of $\{v_k\}$ in $\mathcal{D}^{s,2}(\mathbb{R}^3)$ and $v_k\rightharpoonup v$ in $\mathcal{D}^{s,2}(\mathbb{R}^3)$, by Phrokorov¡¯s theorem (Theorem 8.6.2 in \cite{B}), there exist $\mu,\nu\in\mathcal{M}(\mathbb{R}^3)$ such that
\begin{equation*}
|D_sv_k|^2\rightharpoonup\mu\,\,\text{and}\,\, |v_k|^{2_s^{\ast}}\rightharpoonup\nu\,\, \text{weakly-}\ast\,\, \text{in}\,\, \mathcal{M}(\mathbb{R}^3)\,\,\text{as}\,\, k\rightarrow\infty.
\end{equation*}
By Lemma \ref{lem2-2}, there exist an at most countable index set $J$, sequence $\{x_j\}_{j\in J}\subset\mathbb{R}^3$ and $\{\nu_j\}\subset(0,\infty)$ such that
\begin{equation}\label{equ2-11}
\mu\geq|D_sv|^2+\sum_{j\in J}\mu_j\delta_{x_j},\quad \nu=|v|^{2_s^{\ast}}+\sum_{j\in J}\nu_j\delta_{x_j}.
\end{equation}
and
\begin{equation}\label{equ2-12}
\nu_j\leq(\mathcal{S}_s^{-1}\mu_j)^{\frac{2_s^{\ast}}{2}}\,\, \text{for any}\,\, j\in J.
\end{equation}
Next, we show that $\{x_j\}_{j\in J}\cap\overline{B_1(0)}=\emptyset$. Suppose by contradiction that there exists $j_0\in J$ such that $x_{j_0}\in\overline{B_1(0)}$, and define the function $\phi_{\rho}=:\phi(\frac{x-x_{j_0}}{\rho})$, where $\phi$ is a smooth cut-off function such that $\phi=1$ on $B_1(0)$, $\phi=0$ on $\mathbb{R}^3\backslash B_2(0)$, $0\leq\phi\leq1$ and $|\nabla\phi|\leq C$. Denote $\phi_{k,\rho}(x)=\phi_{\rho}(\frac{x-z_k}{\sigma_k})$, by the fact that $z_k\in\overline{Q}$, $x_{j_0}\in\overline{B_1(0)}$ and $\sigma_k\rightarrow0$ as $k\rightarrow\infty$, we see that for $k$ large, ${\rm supp}\phi_{k,\rho}\subset B_{2\sigma_k\rho}(z_k+\sigma_kx_{j_0})\subset V$. Direct computation, it can be checked that $\phi_{k,\rho}u_k\in H^s(\mathbb{R}^3)$. Indeed, by H\"{o}lder's inequality, we have that
\begin{align*}
\int_{\mathbb{R}^3}|D_s(\phi_{\rho,k}u_k)|^2\,{\rm d}z&\leq2\int_{\mathbb{R}^3}\phi_{\rho,k}^2|D_su_k|^2\,{\rm d}z+2\int_{\mathbb{R}^3}u_k^2|D_s\phi_{\rho,k}|^2\,{\rm d}z\\
&\leq2\int_{\mathbb{R}^3}|D_su_k|^2\,{\rm d}z+\Big(\int_{\mathbb{R}^3}|u_k|^{2_s^{\ast}}\,{\rm d}z\Big)^{\frac{2}{2_s^{\ast}}}\Big(\int_{\mathbb{R}^3}|D_s\phi_{\rho,k}|^{\frac{3}{2s}}\Big)^{\frac{2s}{3}}
\end{align*}
and directly computations, we get
\begin{align*}
&\int_{\mathbb{R}^3}\Big|\int_{\mathbb{R}^3}\frac{|\phi_{\rho,k}(z)-\phi_{\rho,k}(y)|^2}{|z-y|^{3+2s}}\,{\rm d}y\Big|^{\frac{3}{2s}}\,{\rm d}z=\int_{\mathbb{R}^3}\Big|\int_{\mathbb{R}^3}\frac{|\phi(z)-\phi(y)|^2}{|z-y|^{3+2s}}\,{\rm d}y\Big|^{\frac{3}{2s}}\,{\rm d}z\\
&=\int_{\mathbb{R}^3\backslash B_2(0)}\Big|\int_{\mathbb{R}^3}\frac{|\phi(z)-\phi(y)|^2}{|z-y|^{3+2s}}\,{\rm d}y\Big|^{\frac{3}{2s}}\,{\rm d}z+\int_{B_2(0)}\Big|\int_{\mathbb{R}^3}\frac{|\phi(z)-\phi(y)|^2}{|z-y|^{3+2s}}\,{\rm d}y\Big|^{\frac{3}{2s}}\,{\rm d}z\\
&=\int_{\mathbb{R}^3\backslash B_2(0)}\Big|\int_{B_2(0)}\frac{|\phi(z)-\phi(y)|^2}{|z-y|^{3+2s}}\,{\rm d}y\Big|^{\frac{3}{2s}}\,{\rm d}z+\int_{B_2(0)}\Big|\int_{\mathbb{R}^3}\frac{|\phi(z)-\phi(y)|^2}{|z-y|^{3+2s}}\,{\rm d}y\Big|^{\frac{3}{2s}}\,{\rm d}z\\
&\leq C\Big[\int_{B_3(0)}\Big|\int_{|z-y|\leq1}\frac{1}{|z-y|^{1+2s}}\,{\rm d}y\Big|^{\frac{3}{2s}}\,{\rm d}z+\int_{\mathbb{R}^3\backslash B_2(0)}\Big|\int_{|z-y|>1,y\in B_2(0)}\frac{|\phi(z)-\phi(y)|^2}{|z-y|^{3+2s}}\,{\rm d}y\Big|^{\frac{3}{2s}}\,{\rm d}z\\
&+\int_{B_2(0)}\Big|\int_{|z-y|\leq1}\frac{1}{|z-y|^{1+2s}}\,{\rm d}y+\int_{|z-y|>1}\frac{1}{|z-y|^{3+2s}}\,{\rm d}y\Big|^{\frac{3}{2s}}\,{\rm d}z\Big]\\
&\leq C\Big(1+\int_{\mathbb{R}^3\backslash B_2(0)}\Big|\int_{|z-y|>1,y\in B_2(0)}\frac{|\phi(z)-\phi(y)|^2}{|z-y|^{3+2s}}\,{\rm d}y\Big|^{\frac{3}{2s}}\,{\rm d}z\Big)\\
&=C\Big(1+\int_{\mathbb{R}^3\backslash B_2(0)}\Big|\int_{|z-y|>\frac{|z|}{2},y\in B_2(0)}\frac{1}{|z-y|^{3+2s}}\,{\rm d}y\Big|^{\frac{3}{2s}}\,{\rm d}z\\
&+\int_{\mathbb{R}^3\backslash B_2(0)}\Big|\int_{1<|z-y|\leq\frac{|z|}{2},y\in B_2(0)}\frac{1}{|z-y|^{1+2s}}\,{\rm d}y\Big|^{\frac{3}{2s}}\,{\rm d}z\Big)\\
&=C\Big(1+\int_{\mathbb{R}^3\backslash B_2(0)}\Big|\int_{|z-y|>\frac{|z|}{2},y\in B_2(0)}\frac{1}{|z-y|^{3+2s}}\,{\rm d}y\Big|^{\frac{3}{2s}}\,{\rm d}z+\int_{B_4(0)}\Big|\int_{1<|z-y|\leq\frac{|z|}{2}}\frac{1}{|z-y|^{1+2s}}\,{\rm d}y\Big|^{\frac{3}{2s}}\,{\rm d}z\Big)\\
&\leq C\Big(1+\int_{\mathbb{R}^3\backslash B_2(0)}\frac{1}{|z|^{(3+2s)\frac{3}{2s}}}\,{\rm d}z\Big)\leq C
\end{align*}
which implies that $\phi_{\rho,k}u_k\in H^s(\mathbb{R}^3)$.

\begin{align}\label{equ2-13}
&\int_{\mathbb{R}^3}\frac{(v_k(z)-v_k(y))(\phi_{\rho}(z)v_k(z)-\phi_{\rho}(y)v_k(y))}{|z-y|^{3+2s}}\,{\rm d}y\,{\rm d}z-\int_{\mathbb{R}^3}|v_k|^{2_s^{\ast}-2}v_k(\phi_{\rho}v_k)\,{\rm d}z\nonumber\\
&=\int_{\mathbb{R}^3}\frac{(u_k(z)-u_k(y))(\phi_{k,\rho}(z)u_k(z)-\phi_{k,\rho}(y)u_k(y))}{|z-y|^{3+2s}}\,{\rm d}y\,{\rm d}z-\int_{\mathbb{R}^3}|u_k|^{2_s^{\ast}-2}u_k(\phi_{k,\rho}u_k)\,{\rm d}z\nonumber\\
&=o_k(1)\|\phi_{k,\rho} u_k\|=o_k(1).
\end{align}
Since
\begin{align*}
&\int_{\mathbb{R}^3}\frac{(v_k(z)-v_k(y))(\phi_{\rho}(z)v_k(z)-\phi_{\rho}(y)v_k(y))}{|z-y|^{3+2s}}\,{\rm d}y\,{\rm d}z\\
&=C_s\int_{\mathbb{R}^3}|D_sv_k|^2\phi_{\rho}\,{\rm d}z+C_s\int_{\mathbb{R}^3\times\mathbb{R}^3}\frac{(\phi_{\rho}(z)-\phi_{\rho}(y))(v_k(z)-v_k(y))v_k(y)}{|z-y|^{3+2s}}\,{\rm d}y\,{\rm d}z,
\end{align*}
and
\begin{equation*}
\int_{\mathbb{R}^3\times\mathbb{R}^3}\frac{(\phi_{\rho}(z)-\phi_{\rho}(y))(v_k(z)-v_k(y))v_k(y)}{|z-y|^{3+2s}}\,{\rm d}y\,{\rm d}z\leq\|D_sv_k\|_{L^2}\|v_kD_s\phi_{\rho}\|_{L^2},
\end{equation*}
then we claim that
\begin{equation}\label{equ2-13-0}
\lim_{\rho\rightarrow0{+}}\limsup_{k\rightarrow\infty}\int_{\mathbb{R}^3}v_k^2|D_s\phi_{\rho}|^2\,{\rm d}z=0.
\end{equation}
Indeed, since
\begin{align*}
\mathbb{R}^3\times\mathbb{R}^3&=(B_{2\rho}^{c}(x_j)\times B_{2\rho}^{c}(x_j))\cup (B_{2\rho}(x_j)\times B_{2\rho}(x_j))\cup(B_{2\rho}^{c}(x_j)\times B_{\rho}(x_j))\cup (B_{\rho}(x_j)\times B_{2\rho}^{c}(x_j))\\
&\cup( B_{2\rho}(x_j)\backslash B_{\rho}(x_j)\times B_{2\rho}^{c}(x_j))\cup(B_{2\rho}^{c}(x_j)\times B_{2\rho}(x_j)\backslash B_{\rho}(x_j)),
\end{align*}
where $B_{\rho}^{c}(x_j)=\mathbb{R}^3\backslash B_{\rho}(x_j)$ and $B_{2\rho}^{c}(x_j)=\mathbb{R}^3\backslash B_{2\rho}(x_j)$. Next we will discuss the six cases on the above domains, respectively.

$\bullet$ $(y,z)\in B_{2\rho}^{c}(x_j)\times B_{2\rho}^{c}(x_j)$. Clearly $|\phi_{\rho}(z)-\phi_{\rho}(y)|=0$ and so
\begin{equation*}
\int_{B_{2\rho}^{c}(x_j)\times B_{2\rho}^{c}(x_j)}v_k^2(y)\frac{|\phi_{\rho}(z)-\phi_{\rho}(y)|^2}{|z-y|^{3+2s}}\,{\rm d}z\,{\rm d}y=0.
\end{equation*}

$\bullet$ $(y,z)\in B_{2\rho}(x_j)\times B_{2\rho}(x_j)$. Since $|\phi_{\rho}(z)-\phi_{\rho}(y)|\leq \frac{C}{\rho}|z-y|$ and $|y-z|\leq|y-x_j|+|z-x_j|\leq4\rho$, we have
\begin{align*}
\int_{B_{2\rho}(x_j)}v_k^2(y)\int_{B_{2\rho}(x_j)}\frac{|\phi_{\rho}(z)-\phi_{\rho}(y)|^2}{|z-y|^{3+2s}}\,{\rm d}z\,{\rm d}y&\leq\frac{C}{\rho^2}\int_{B_{2\rho}(x_j)}v_k^2(y)\int_{|y-z|\leq4\rho}\frac{1}{|y-z|^{1+2s}}\,{\rm d}z\,{\rm d}y\\
&\leq\frac{C}{\rho^{2s}}\int_{B_{2\rho}(x_j)}v_k^2\,{\rm d}y.
\end{align*}

$\bullet$ $(y,z)\in B_{\rho}(x_j)\times B_{2\rho}^{c}(x_j)$. There holds $|z-y|\geq|z-x_j|-|y-x_j|\geq\rho$ and thus
\begin{align*}
\int_{B_{\rho}(x_j)}v_k^2(y)\int_{|z-y|\geq\rho, z\in B_{2\rho}^{c}(x_j)}\frac{|\phi_{\rho}(y)-\phi_{\rho}(z)|^2}{|z-y|^{3+2s}}\,{\rm d}z\,{\rm d}y&\leq C\int_{B_{\rho}(x_j)}v_k^2\int_{|z-y|\geq\rho}\frac{1}{|z-y|^{3+2s}}\,{\rm d}z\,{\rm d}y\\
&\leq\frac{C}{\rho^{2s}}\int_{B_{\rho}(x_j)}v_k^2\,{\rm d}y.
\end{align*}

$\bullet$ $(y,z)\in B_{2\rho}^{c}(x_j)\times B_{\rho}(x_j)$. Obviously, $|y-z|\geq\rho$. Observe that for any fixed $K\geq4$, $ B_{2\rho}^c(x_j)\times B_{\rho}(x_j)\subset B_{K\rho}(x_j)\times B_{\rho}(x_j)\cup B_{K\rho}^c(x_j)\times B_{\rho}(x_j)$. Hence, if $|y-z|>\rho$ and $(y,z)\in B_{K\rho}(x_j)\times B_{\rho}(x_j)$, we have
\begin{align*}
\int_{B_{K\rho}(x_j)}v_k^2\int_{|y-z|\geq\rho,z\in B_{\rho}(x_j)}\frac{|\phi_{\rho}(y)-\phi_{\rho}(z)|^2}{|y-z|^{3+2s}}\,{\rm d}z\,{\rm d}y&\leq C\int_{B_{K\rho}(x_j)}v_k^2\int_{|y-z|\geq\rho}\frac{1}{|y-z|^{3+2s}}\,{\rm d}z\,{\rm d}y\\
&\leq\frac{C}{\rho^{2s}}\int_{B_{K\rho}(x_j)}v_k^2\,{\rm d}y.
\end{align*}
If $(y,z)\in B_{K\rho}^c(x_j)\times B_{\rho}(x_j)$, $|y-z|\geq|y-x_j|-|z-x_j|\geq\frac{3|y-x_j|}{4}+\frac{K}{4}\rho-\rho>\frac{3|y-x_j|}{4}$. By H\"{o}lder's inequality, we have
\begin{align*}
&\int_{B_{K\rho}^c(x_j)}v_k^2\int_{|x-y|\geq\rho, z\in B_{\rho}(x_j)}\frac{|\phi_{\rho}(z)-\phi_{\rho}(y)|^2}{|z-y|^{3+2s}}\,{\rm d}z\,{\rm d}y\leq C\rho^3\int_{B_{K\rho}^c(x_j)}v_k^2\frac{1}{|y-x_j|^{3+2s}}\,{\rm d}y\\
&\leq C\rho^3\Big(\int_{B_{K\rho}^c(x_j)}|v_k|^{2_s^{\ast}}\,{\rm d}y\Big)^{\frac{3-2s}{3}}\Big(\int_{B_{K\rho}^c(x_j)}\frac{1}{|y-x_j|^{(3+2s)\frac{3}{2s}}}\,{\rm d}y\Big)^{\frac{2s}{3}}\\
&\leq \frac{C}{K^3}\Big(\int_{B_{K\rho}^c(x_j)}|v_k|^{2_s^{\ast}}\,{\rm d}y\Big)^{\frac{3-2s}{3}}\leq\frac{C}{K^3}.
\end{align*}

$\bullet$ $(y,z)\in B_{2\rho}^c(x_j)\times B_{2\rho}(x_j)\backslash B_{\rho}(x_j)$. If $|y-z|\leq\rho$, then $|y-x_j|\leq|y-z|+|z-x_j|\leq3\rho$ and thus
\begin{align*}
\int_{ B_{2\rho}^c(x_j)}v_k^2\int_{|y-z|\leq\rho}\frac{|\phi_{\rho}(y)-\phi_{\rho}(z)|^2}{|y-z|^{3+2s}}\,{\rm d}z\,{\rm d}y&\leq\frac{C}{\rho^2}\int_{B_{3\rho}(x_j)}v_k^2\int_{|y-z|\leq\rho}\frac{|y-z|^2}{|y-z|^{3+2s}}\,{\rm d}z\,{\rm d}y\\
&\leq\frac{C}{\rho^{2s}}\int_{B_{3\rho}(x_j)}v_k^2\,{\rm d}y.
\end{align*}
Observe that for any fixed $K\geq4$, $ B_{2\rho}^c(x_j)\times B_{2\rho}(x_j)\backslash B_{\rho}(x_j) A \subset B_{K\rho}(x_j)\times B_{2\rho}(x_j)\cup B_{K\rho}^c(x_j)\times B_{2\rho}(x_j)$. Hence, if $|y-z|>\rho$ and $(y,z)\in B_{K\rho}(x_j)\times B_{2\rho}(x_j)$, we have
\begin{align*}
\int_{B_{K\rho}(x_j)}v_k^2\int_{|y-z|>\rho,z\in B_{2\rho}(x_j)}\frac{|\phi_{\rho}(y)-\phi_{\rho}(z)|^2}{|y-z|^{3+2s}}\,{\rm d}z\,{\rm d}y&\leq C\int_{B_{K\rho}(x_j)}v_k^2\int_{|y-z|>\rho}\frac{1}{|y-z|^{3+2s}}\,{\rm d}z\,{\rm d}y\\
&\leq\frac{C}{\rho^{2s}}\int_{B_{K\rho}(x_j)}v_k^2\,{\rm d}y.
\end{align*}
If $(y,z)\in B_{K\rho}^c(x_j)\times B_{2\rho}(x_j)$, $|y-z|\geq|y-x_j|-|z-x_j|\geq\frac{|y-x_j|}{2}+\frac{K}{2}\rho-2\rho\geq\frac{|y-x_j|}{2}$. By H\"{o}lder's inequality, we have
\begin{align*}
&\int_{B_{K\rho}^c(x_j)}v_k^2\int_{|x-y|>\rho, z\in B_{2\rho}(x_j)}\frac{|\phi_{\rho}(z)-\phi_{\rho}(y)|^2}{|z-y|^{3+2s}}\,{\rm d}z\,{\rm d}y\leq C\rho^3\int_{B_{K\rho}^c(x_j)}v_k^2\frac{1}{|y-x_j|^{3+2s}}\,{\rm d}y\\
&\leq C\rho^3\Big(\int_{B_{K\rho}^c(x_j)}|v_k|^{2_s^{\ast}}\,{\rm d}y\Big)^{\frac{3-2s}{3}}\Big(\int_{B_{K\rho}^c(x_j)}\frac{1}{|y-x_j|^{(3+2s)\frac{3}{2s}}}\,{\rm d}y\Big)^{\frac{2s}{3}}\\
&\leq \frac{C}{K^3}\Big(\int_{B_{K\rho}^c(x_j)}|v_k|^{2_s^{\ast}}\,{\rm d}y\Big)^{\frac{3-2s}{3}}\leq\frac{C}{K^3}.
\end{align*}

$\bullet$ $(y,z)\in B_{2\rho}(x_j)\backslash B_{\rho}(x_j)\times B_{2\rho}^{c}(x_j)$. If $|y-z|\leq\rho$, we have
\begin{align*}
\int_{ B_{2\rho}(x_j)\backslash B_{\rho}(x_j)}v_k^2\int_{|y-z|\leq\rho,z\in B_{2\rho}^{c}(x_j)}\frac{|\phi_{\rho}(y)-\phi_{\rho}(z)|^2}{|x-y|^{3+2s}}\,{\rm d}z\,{\rm d}y&\leq\frac{C}{\rho^2}\int_{B_{2\rho}(x_j)}v_k^2\int_{|y-z|\leq\rho}\frac{|y-z|^2}{|y-z|^{3+2s}}\,{\rm d}z\,{\rm d}y\\
&\leq\frac{C}{\rho^{2s}}\int_{B_{2\rho}(x_j)}v_k^2\,{\rm d}y.
\end{align*}
If $|y-z|>\rho$, then $|y-z|\geq\frac{|y-x_j|}{2}$. One has
\begin{align*}
\int_{B_{2\rho}(x_j)\backslash B_{\rho}(x_j)}v_k^2(y)\int_{|z-y|>\rho, z\in B_{2\rho}^{c}(x_j)}\frac{|\phi_{\rho}(y)-\phi_{\rho}(z)|^2}{|z-y|^{3+2s}}\,{\rm d}z\,{\rm d}y&\leq C\int_{B_{2\rho}(x_j)}v_k^2\int_{|z-y|>\rho}\frac{1}{|z-y|^{3+2s}}\,{\rm d}z\,{\rm d}y\\
&\leq\frac{C}{\rho^{2s}}\int_{B_{2\rho}(x_j)}v_k^2\,{\rm d}y.
\end{align*}

From all the above estimates and using H\"{o}lder's inequality, we get that
\begin{align*}
&\limsup_{k\rightarrow\infty}\int_{\mathbb{R}^3}v_k^2|D_s\phi_{\rho}|^2\,{\rm d}x\\
&\leq \frac{C}{\rho^{2s}}\Big(\int_{B_{2\rho}(x_j)}v^2\,{\rm d}x+\int_{B_{3\rho}(x_j)}v^2\,{\rm d}x+\int_{B_{K\rho}(x_j)}v^2\,{\rm d}x\Big)+\frac{C}{K^3}\\
&\leq C\Big[\Big(\int_{B_{2\rho}(x_j)}|v|^{2_s^{\ast}}\,{\rm d }x\Big)^{\frac{2}{2_s^{\ast}}}+\Big(\int_{B_{3\rho}(x_j)}|v|^{2_s^{\ast}}\,{\rm d }x\Big)^{\frac{2}{2_s^{\ast}}}\Big]
+K^{2s}\Big(\int_{B_{K\rho}(x_j)}|v|^{2_s^{\ast}}\,{\rm d }x\Big)^{\frac{2}{2_s^{\ast}}}+\frac{C}{K^3}.
\end{align*}
Letting $\rho\rightarrow0^{+}$ and then letting $K\rightarrow+\infty$, \eqref{equ2-13-0} follows.
Thus, from \eqref{equ2-13}, \eqref{equ2-11} and \eqref{equ2-12}, we get that
\begin{align*}
\mu_{j_0}&=\lim_{\rho\rightarrow0^{+}}\int_{B_{2\rho}(x_{j_0})}\phi_{\rho}\,{\rm d}\mu=\lim_{\rho\rightarrow0^{+}}\lim_{k\rightarrow\infty}\int_{\mathbb{R}^3}|D_sv_k|^2\phi_{\rho}\,{\rm d}x\\
&=\lim_{\rho\rightarrow0^{+}}\lim_{k\rightarrow\infty}\int_{\mathbb{R}^3}|v_k|^{2_s^{\ast}}\phi_{\rho}\,{\rm d}x=\lim_{\rho\rightarrow0^{+}}\int_{\mathbb{R}^3}|v|^{2_s^{\ast}}\phi_{\rho}\,{\rm d}x+\nu_{j_0}=\nu_{j_0}
\end{align*}
which leads to
\begin{equation*}
\nu_{j_0}\geq\mathcal{S}_s^{\frac{3}{2s}}.
\end{equation*}
But, by \eqref{equ2-9}, we have that
\begin{equation*}
\mathcal{S}_s^{\frac{3}{2s}}\leq\nu_{j_0}\leq\int_{B_1(0)}|v_k|^{2_s^{\ast}}\,{\rm d}x+o(1)=\tau_0+o(1)
\end{equation*}
which contradicts with $\tau_0<\mathcal{S}_s^{\frac{3}{2s}}$. Hence, $\{x_j\}_{j\in J}\cap \overline{B_1(0)}=\emptyset$ and then \eqref{equ2-10} holds. We complete the proof.
\end{proof}

\begin{proposition}\label{pro2-2}(\cite{Teng3}, Proposition 5.1)
Assume that $u_n$ are nonnegative weak solution of
\begin{equation*}
\left\{
  \begin{array}{ll}
   (-\Delta)^su+V_n(x) u+ \phi u=f_n(x,u) & \hbox{in $\mathbb{R}^3$,} \\
    (-\Delta)^t\phi=u^2& \hbox{in $\mathbb{R}^3$,}
  \end{array}
\right.
\end{equation*}
where $\{V_n\}$ satisfies $V_n(x)\geq V_0>0$ for all $x\in\mathbb{R}^3$ and $f_n(x,\tau)$ is a Carathedory function satisfying that for any $\delta>0$, there exists $C_{\varepsilon}>0$ such that
\begin{equation*}
|f_n(x,\tau)|\leq\delta|\tau|+C_{\delta}|\tau|^{2_s^{\ast}-1},\quad \forall (x,\tau)\in\mathbb{R}^3\times\mathbb{R}.
\end{equation*}
satisfying $u_n$ convergence strongly in $H^s(\mathbb{R}^3)$ or $u_n$ convergence strongly in $L^{2_s^{\ast}}(\mathbb{R}^3)$. Then there exists $C>0$ such that
\begin{equation*}
\|u_n\|_{L^{\infty}}\leq C\quad \text{for all}\,\, n.
\end{equation*}
\end{proposition}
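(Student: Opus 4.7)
The strategy is a Moser-type iteration (or equivalently a Brezis--Kato bootstrap) for the fractional Laplacian, carried out with every constant explicitly independent of $n$. Since $\phi_{u_n}^t(x)=c_t\int u_n^2/|x-y|^{3-2t}\geq 0$ and $V_n\geq V_0>0$, both non-local terms sit on the favourable side, so each nonnegative weak solution satisfies distributionally
\begin{equation*}
(-\Delta)^s u_n \leq \delta u_n + C_\delta u_n^{2_s^{\ast}-1}
\end{equation*}
for any $\delta>0$. Rewriting the right-hand side as $W_n u_n$ with $W_n(x)\leq \delta+C_\delta u_n^{2_s^{\ast}-2}$ reduces the problem to a linear inequality with potential $W_n\in L^{3/(2s)}$. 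The strong-convergence hypothesis is used only once, but essentially: it ensures that $\{u_n^{2_s^{\ast}}\}$ is equi-integrable in $L^1(\mathbb{R}^3)$, so for every $\varepsilon>0$ there is $M=M(\varepsilon)$, independent of $n$, with $\int_{\{u_n>M\}}u_n^{2_s^{\ast}}\,dx<\varepsilon$.

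For $\beta\geq 1$ and $L>0$, I would set $u_{n,L}=\min\{u_n,L\}$ and test the equation against $\varphi_n=u_n u_{n,L}^{2(\beta-1)}\in H^s(\mathbb{R}^3)$. The standard pointwise fractional inequality, combined with the Sobolev embedding $\mathcal{D}^{s,2}(\mathbb{R}^3)\hookrightarrow L^{2_s^{\ast}}(\mathbb{R}^3)$, then yields
\begin{equation*}
\|u_n u_{n,L}^{\beta-1}\|_{L^{2_s^{\ast}}}^{2}\leq C\beta^2\int_{\mathbb{R}^3}u_n^{2_s^{\ast}-2}\bigl(u_n u_{n,L}^{\beta-1}\bigr)^2\,dx.
\end{equation*}
Splitting the right-hand side over $\{u_n\leq M\}\cup\{u_n>M\}$ and applying H\"older on the large-set piece produces a good part bounded by $C\beta^2 M^{2_s^{\ast}-2}\|u_n u_{n,L}^{\beta-1}\|_{L^2}^2$ and a critical part bounded by $C\beta^2\varepsilon^{2s/3}\|u_n u_{n,L}^{\beta-1}\|_{L^{2_s^{\ast}}}^2$. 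Choosing $\varepsilon=\varepsilon(\beta)$ small enough to absorb the critical part on the left, letting $L\to\infty$ by Fatou, and iterating with $\beta_{k+1}=(2_s^{\ast}/2)\beta_k$, the resulting geometric recursion converges and delivers $\|u_n\|_{L^\infty}\leq C\|u_n\|_{L^{2_s^{\ast}}}$ with $C$ independent of $n$. Since strong convergence in $H^s$ (via Sobolev embedding) or in $L^{2_s^{\ast}}$ yields a uniform $L^{2_s^{\ast}}$-bound on $\{u_n\}$, the uniform $L^\infty$-bound follows.

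The principal obstacle is securing uniformity in $n$ precisely at the critical-growth step. Without $L^{2_s^{\ast}}$-compactness, concentration of mass at a point could force the threshold $M$ to depend on $n$ and grow without bound, and then the iteration constants would blow up as $n\to\infty$. The strong-convergence assumption is exactly the ingredient that provides uniform equi-integrability of $\{u_n^{2_s^{\ast}}\}$ and closes the Moser loop with $n$-independent constants; the remaining machinery (truncation, fractional pointwise inequality, Sobolev embedding, Fatou limit, geometric iteration) is standard and parallels the scheme used in Cabr\'e--Sire and in the fractional Schr\"odinger--Poisson literature.
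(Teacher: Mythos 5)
The paper does not prove this proposition internally; it is quoted verbatim from \cite{Teng3}, Proposition 5.1, so there is no in-text argument to compare against. Your Moser/Brezis--Kato scheme is nonetheless the standard route for such uniform-in-$n$ $L^\infty$ estimates, and you correctly identify the two essential points: (i) $V_n\geq V_0$ and $\phi_{u_n}^t\geq0$ let you drop those terms to the favourable side, and (ii) strong convergence in $L^{2_s^{\ast}}$ (equivalently, by Sobolev embedding, in $H^s$) is exactly what gives the $n$-uniform equi-integrability of $\{u_n^{2_s^{\ast}}\}$ via Vitali's theorem, which is the crux of making the iteration constants independent of $n$.

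There is, however, a genuine gap in the closing step. You recalibrate $\varepsilon=\varepsilon(\beta)$ (hence $M=M(\varepsilon(\beta))$) at \emph{every} stage $\beta_k=(2_s^{\ast}/2)^k$ of the iteration, and then assert that the resulting product $\prod_k\bigl(C\beta_k^2 M_k^{2_s^{\ast}-2}\bigr)^{1/(2\beta_k)}$ converges. That does not follow: equi-integrability gives you \emph{some} finite $M(\varepsilon)$ for each $\varepsilon$, but no rate, so $\log M_k$ can outrun $\beta_k$ and the product can diverge. The usual way to close this is to use the equi-integrability splitting \emph{once}, with a single fixed $\beta_1>1$, to promote the uniform $L^{2_s^{\ast}}$ bound to a uniform $L^{r_1}$ bound for some $r_1>2_s^{\ast}$; this takes the coefficient $u_n^{2_s^{\ast}-2}$ into $L^{p}$ for some $p>3/(2s)$ with an $n$-independent norm, and from there the remaining iteration is strictly subcritical, so H\"older with $2p'<2_s^{\ast}$, interpolation between $L^2$ and $L^{2_s^{\ast}}$, and Young's inequality absorb the high end with purely geometric constants that do converge under $\beta_{k+1}=(2_s^{\ast}/2)\beta_k$. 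With that one structural correction your argument is sound; everything else (the truncation $u_{n,L}$, the fractional pointwise inequality, Fatou to remove $L$, and the conclusion $\|u_n\|_{L^\infty}\leq C\|u_n\|_{L^{2_s^{\ast}}}$) is in order.
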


\begin{lemma}\label{lem2-3}(\cite{S}, Proposition 2.9)
Let $w=(-\Delta)^su$. Assume $w\in L^{\infty}(\mathbb{R}^n)$ and $u\in L^{\infty}(\mathbb{R}^n)$ for $s>0$.\\
If $2s\leq1$, then $u\in C^{0,\alpha}(\mathbb{R}^n)$ for any $\alpha\leq2s$. Moreover
\begin{equation*}
\|u\|_{C^{0,\alpha}(\mathbb{R}^n)}\leq C\Big(\|u\|_{L^{\infty}(\mathbb{R}^n)}+\|w\|_{L^{\infty}(\mathbb{R}^n)}\Big)
\end{equation*}
for some constant $C$ depending only on $n$, $\alpha$ and $s$.\\
If $2s>1$, then $u\in C^{1,\alpha}(\mathbb{R}^n)$ for any $\alpha<2s-1$. Moreover
\begin{equation*}
\|u\|_{C^{1,\alpha}(\mathbb{R}^n)}\leq C\Big(\|u\|_{L^{\infty}(\mathbb{R}^n)}+\|w\|_{L^{\infty}(\mathbb{R}^n)}\Big)
\end{equation*}
for some constant $C$ depending only on $n$, $\alpha$ and $s$.
\end{lemma}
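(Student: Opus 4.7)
The plan is to derive this $L^\infty$-to-H\"older regularity estimate through the Caffarelli--Silvestre extension, which reduces the nonlocal problem to a local degenerate elliptic boundary value problem on the upper half-space $\mathbb{R}^{n+1}_+$, and then to apply weighted elliptic regularity theory.

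First I would build the harmonic-type extension
\[
\tilde u(x,y) = c_{n,s}\int_{\mathbb{R}^n} \frac{y^{2s}\, u(\xi)}{(|x-\xi|^2 + y^2)^{(n+2s)/2}}\, d\xi,
\]
which satisfies $\tilde u(\cdot,0)=u$, $\mathrm{div}(y^{1-2s}\nabla\tilde u)=0$ in $\mathbb{R}^{n+1}_+$, and the weighted co-normal relation
\[
-\kappa_s\lim_{y\to 0^+} y^{1-2s}\partial_y\tilde u(x,y) = (-\Delta)^s u(x) = w(x),
\]
for an explicit constant $\kappa_s>0$. Because the Poisson-type kernel has unit mass in $\xi$, the maximum principle yields $\|\tilde u\|_{L^\infty(\mathbb{R}^{n+1}_+)}\le \|u\|_{L^\infty(\mathbb{R}^n)}$.

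Next I would invoke the interior H\"older theory of Fabes--Kenig--Serapioni for divergence-form elliptic equations with Muckenhoupt $A_2$ weights; the weight $y^{1-2s}$ lies in $A_2$ since $|1-2s|<1$. To transport this regularity up to the boundary $\{y=0\}$, I would subtract a barrier compensating the co-normal datum $w(x_0)$, for example by considering $\tilde u(x,y)-u(x_0)-c\, w(x_0)\, y^{2s}$, whose weighted normal derivative at $(x_0,0)$ vanishes and which therefore satisfies a homogeneous Neumann condition modulo an error controlled by $\|w\|_\infty$. Even reflection across $\{y=0\}$ turns the problem into a genuinely interior one for a weighted equation, and the standard oscillation-decay lemma then produces
\[
\operatorname{osc}_{B_r^+(x_0,0)}\tilde u \le C\, r^{\alpha}\bigl(\|u\|_\infty + \|w\|_\infty\bigr)
\]
for every $\alpha\le 2s$ when $2s\le1$. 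Restricting to the trace $y=0$ delivers the first assertion.

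For the regime $2s>1$, having already established $C^{0,\alpha_0}$ regularity of $u$ for some $\alpha_0\in(0,1)$, I would bootstrap via difference quotients: for any small $h\in\mathbb{R}^n$ the function $v_h(x)=\bigl(u(x+h)-u(x)\bigr)/|h|^{\alpha_0}$ is uniformly bounded in $L^\infty$ and satisfies $(-\Delta)^s v_h=\bigl(w(\cdot+h)-w(\cdot)\bigr)/|h|^{\alpha_0}$. Applying the already proved H\"older estimate to $v_h$ and running a Campanato-type iteration to replace $u$ by its best affine approximation at each point yields $C^{1,\beta}$ regularity for every $\beta<2s-1$, with constants of the stated form. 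The main obstacle is the sharp boundary H\"older exponent: the weight $y^{1-2s}$ is degenerate (or singular) precisely on $\{y=0\}$, where the inhomogeneous co-normal datum $w$ is prescribed, so absorbing $w$ into a polynomial barrier must be carried out in a scale-invariant way so as not to lose any exponent below the natural threshold $2s$. Making this barrier construction compatible with the Fabes--Kenig--Serapioni weighted oscillation decay is the heart of the argument.
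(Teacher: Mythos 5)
The paper offers no proof of this lemma: it is cited verbatim as Proposition~2.9 of Silvestre's paper \cite{S}, whose own argument is a direct, purely $n$-dimensional one based on estimating the Riesz-potential representation $u = I_{2s}w + (\text{correction})$ against the singular kernel, with no recourse to the Caffarelli--Silvestre extension at all. Your extension-based route is a genuinely different (and a priori viable) strategy, in the spirit of Cabr\'e--Sire and Jin--Li--Xiong, but as written it contains a real gap at its central step.

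The gap is in the passage from the interior Fabes--Kenig--Serapioni estimate to regularity up to $\{y=0\}$. Subtracting $u(x_0) + c\,w(x_0)\,y^{2s}$ kills the co-normal datum only at the single point $(x_0,0)$; the remaining Neumann datum is $w(x)-w(x_0)$, which is merely bounded (nothing is assumed beyond $w\in L^\infty$), so it does not vanish and is not small near $x_0$. Hence the even reflection does \emph{not} produce a homogeneous interior equation: it produces $\mathrm{div}(|y|^{1-2s}\nabla\tilde U) = -2\,(w(x)-w(x_0))\,\delta_{\{y=0\}}$, a weighted equation with a measure source concentrated on the hyperplane, to which the FKS interior oscillation-decay lemma simply does not apply. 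What is actually needed, and what your sketch silently assumes, is a boundary H\"older estimate for the weighted Neumann problem with bounded datum, i.e. $\mathrm{div}(y^{1-2s}\nabla v)=0$, $-\lim_{y\to 0^+}y^{1-2s}\partial_y v = g\in L^\infty$ implies $v\in C^{0,\alpha}$ up to $\{y=0\}$ for $\alpha\leq\min(2s,1)$; this is a separate theorem (e.g.\ Cabr\'e--Sire Lemma~4.5) whose proof requires solving the inhomogeneous Neumann problem by a weighted Poisson kernel and running a Campanato iteration against it, not a one-point barrier. Moreover, even granting that theorem, the endpoint $\alpha=2s$ claimed in the lemma for $2s<1$ typically falls outside what a generic oscillation-decay scheme delivers (which gives $\alpha<2s$), whereas Silvestre's direct kernel computation reaches the endpoint. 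Your bootstrap for $2s>1$ has the same shortfall, since the difference-quotient iteration inherits whatever exponent loss occurred at the first stage.
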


\section{Limiting problem}

In this section, we consider the "limiting problem" associated with problem \eqref{main-2-1}
\begin{equation}\label{equ3-1}
\left\{
  \begin{array}{ll}
    (-\Delta)^su+\mu u+\phi u=f(u)+u^{2_s^{\ast}-1} & \hbox{in $\mathbb{R}^3$,} \\
    (-\Delta)^t\phi=u^2,\,\, u>0& \hbox{in $\mathbb{R}^3$}
  \end{array}
\right.
\end{equation}
for $\mu>0$. The energy functional for the limiting problem \eqref{equ3-1} is given by
\begin{align*}
\mathcal{I}_{\mu}(u)&=\frac{1}{2}\int_{\mathbb{R}^3}|D_su|^2\,{\rm d}x+\frac{\mu}{2}\int_{\mathbb{R}^3}|u|^2\,{\rm d}x+\frac{1}{4}\int_{\mathbb{R}^3}\phi_u^tu^2\,{\rm d}x-\int_{\mathbb{R}^3}F(u)\,{\rm d}x\\
&-\frac{1}{2_s^{\ast}}\int_{\mathbb{R}^3}(u^{+})^{2_s^{\ast}}\,{\rm d}x,\quad u\in H^s(\mathbb{R}^3).
\end{align*}
Let
\begin{align*}
\mathcal{P}_{\mu}(u)=&\frac{3-2s}{2}\int_{\mathbb{R}^3}|D_su|^2\,{\rm d}x+\frac{3}{2}\int_{\mathbb{R}^3}\mu|u|^2\,{\rm d}x+\frac{3+2t}{4}\int_{\mathbb{R}^3}\phi_{u}^tu^2\,{\rm d}x\\
&-3\int_{\mathbb{R}^3}F(u)\,{\rm d}x-\frac{3}{2_s^{\ast}}\int_{\mathbb{R}^3}(u^{+})^{2_s^{\ast}}\,{\rm d}x
\end{align*}
and
\begin{align*}
\mathcal{G}_{\mu}(u)&=(s+t)\langle \mathcal{I}'_{\mu}(u),u\rangle-\mathcal{P}_{\mu}(u)\\
&=\frac{4s+2t-3}{2}\int_{\mathbb{R}^3}|D_su|^2\,{\rm d}x+\frac{2s+2t-3}{2}\mu\int_{\mathbb{R}^3}|u|^2\,{\rm d}x\\
&+\frac{4s+2t-3}{4}\int_{\mathbb{R}^3}\phi_u^tu^2\,{\rm d}x+\int_{\mathbb{R}^3}\Big(3F(u)-(s+t)f(u)u\Big)\,{\rm d}x\\
&+\frac{3-(s+t)2_s^{\ast}}{2_s^{\ast}}\int_{\mathbb{R}^3}(u^{+})^{2_s^{\ast}}\,{\rm d}x.
\end{align*}
We define the Nehari-Pohozaev manifold
\begin{equation*}
\mathcal{M}_{\mu}=\{u\in H^s(\mathbb{R}^3)\backslash\{0\}\,\,\Big|\,\, \mathcal{G}_{\mu}(u)=0\}
\end{equation*}
and set $b_{\mu}=\inf\limits_{u\in\mathcal{M}_{\mu}}\mathcal{I}_{\mu}(u)$. By standard arguments, we can show the following properties of $\mathcal{M}_{\mu}$.
\begin{proposition}\label{pro3-2}
The set $\mathcal{M}_{\mu}$ possesses the following properties:\\
$(i)$ $0\not\in\partial\mathcal{M}_{\mu}$;\\
$(ii)$ for any $u\in H^s(\mathbb{R}^3)\backslash\{0\}$, there exists a unique $\tau_0:=\tau(u)>0$ such that $u_{\tau_0}\in\mathcal{M}_{\mu}$, where $u_{\tau}=\tau^{s+t}u(\tau x)$. Moreover,
\begin{equation*}
\mathcal{I}_{\mu}(u_{\tau_0})=\max_{\tau\geq0}\mathcal{I}_{\mu}(u_{\tau});
\end{equation*}
\end{proposition}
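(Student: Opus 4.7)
The plan is to prove (i) and (ii) separately, exploiting the scaling $u \mapsto u_\tau=\tau^{s+t}u(\tau\cdot)$ and the monotonicity hypothesis $(f_3)$.

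For (i) it suffices to show $\inf\{\|u\|_{H^s}:u\in\mathcal{M}_\mu\}>0$. Starting from $\mathcal{G}_\mu(u)=0$ I move the sign-indefinite terms to the right to obtain
\begin{equation*}
\tfrac{4s+2t-3}{2}\|D_su\|_2^2+\tfrac{2s+2t-3}{2}\mu\|u\|_2^2+\tfrac{4s+2t-3}{4}\!\int\!\phi_u^tu^2=\!\int\!((s+t)uf(u)-3F(u))\,dx+\tfrac{(s+t)2_s^*-3}{2_s^*}\!\int\!(u^+)^{2_s^*}dx.
\end{equation*}
The hypothesis $(f_3)$ (after integration) gives $qF(v)\leq vf(v)$ for $v\geq 0$, and the condition $q>(4s+2t)/(s+t)$ together with $2s+2t>3$ yields $(s+t)q>3$, so $3F(v)\leq(s+t)vf(v)$. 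Since $4s+2t-3>0$ and $2s+2t-3>0$, the left-hand side is bounded below by $c_0\|u\|_{H^s}^2$. The subcritical decay of $f$ at infinity combined with $(f_0)$ gives the standard bound $f(v)v\leq\varepsilon v^2+C_\varepsilon|v|^{2_s^*}$; together with Sobolev this majorises the right-hand side by $\varepsilon C\|u\|_{H^s}^2+C'\|u\|_{H^s}^{2_s^*}$. Absorbing the $\varepsilon$-term yields $\|u\|_{H^s}\geq c>0$.

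For (ii) put $\psi(\tau)=\mathcal{I}_\mu(u_\tau)$. The scaling identities give
\begin{equation*}
\psi(\tau)=\tfrac{\tau^{4s+2t-3}}{2}\!\int\!|D_su|^2+\tfrac{\mu\tau^{2s+2t-3}}{2}\!\int\!u^2+\tfrac{\tau^{4s+2t-3}}{4}\!\int\!\phi_u^tu^2-\tau^{-3}\!\int\!F(\tau^{s+t}u)\,dx-\tfrac{\tau^{(s+t)2_s^*-3}}{2_s^*}\!\int\!(u^+)^{2_s^*}dx,
\end{equation*}
and a direct differentiation yields the key identity $\tau\psi'(\tau)=\mathcal{G}_\mu(u_\tau)$. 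Because $2s+2t-3>0$, because $F(v)=o(v^2)$ near $0$ by $(f_0)$, and because of the exponent ordering $(s+t)2_s^*-3>4s+2t-3>2s+2t-3>0$, one checks that $\psi(\tau)\to 0^+$ as $\tau\to 0^+$, $\psi$ is strictly positive on some right neighbourhood of $0$, and $\psi(\tau)\to-\infty$ as $\tau\to\infty$. Thus $\psi$ attains a positive maximum at some $\tau_0\in(0,\infty)$ with $\mathcal{G}_\mu(u_{\tau_0})=0$, giving existence together with $\mathcal{I}_\mu(u_{\tau_0})=\max_{\tau\geq 0}\mathcal{I}_\mu(u_\tau)$.

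The main obstacle is the uniqueness of $\tau_0$. I would show $\psi''(\tau_0)<0$ at every critical point, so each such point is a strict local maximum and a second one cannot exist (two local maxima would force a local minimum in between, contradicting strictness). Since $\psi'(\tau_0)=0$, one has $\psi''(\tau_0)=\tau_0^{-1}\frac{d}{d\tau}\mathcal{G}_\mu(u_\tau)|_{\tau=\tau_0}$. After using $\mathcal{G}_\mu(u_{\tau_0})=0$ to eliminate the $\tau^{(s+t)2_s^*-3}$ term and applying the two consequences of $(f_3)$, namely $qF(v)\leq vf(v)$ and $vf'(v)\geq(q-1)f(v)$, the sign reduces to an algebraic inequality in the exponents which factorises as $(q(s+t)-(4s+2t))(q(s+t)-3)\geq 0$; both factors are positive by $q>(4s+2t)/(s+t)$ and $q(s+t)>3$. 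This yields the strict concavity at each critical point and completes the uniqueness argument.
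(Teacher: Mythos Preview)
The paper itself does not prove this proposition; it only states that the properties follow ``by standard arguments''. So there is no detailed argument against which to compare yours. Your treatment of (i) and of the existence half of (ii) is correct and is exactly what one expects from the standard route: the Sobolev bound on the right-hand side of $\mathcal{G}_\mu(u)=0$ gives the uniform lower bound on $\|u\|_{H^s}$, and the shape of $\psi(\tau)=\mathcal{I}_\mu(u_\tau)$ (positive for small $\tau$, tending to $-\infty$) together with the identity $\tau\psi'(\tau)=\mathcal{G}_\mu(u_\tau)$ yields a maximiser on $\mathcal{M}_\mu$.

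Your uniqueness argument, however, has a genuine gap. You assert that after eliminating the $\tau^{(s+t)2_s^\ast-3}$ term from $\tau_0^2\psi''(\tau_0)$ via $\mathcal{G}_\mu(u_{\tau_0})=0$ and applying $qF\le vf$, $vf'\ge(q-1)f$, the sign of $\psi''(\tau_0)$ reduces to the inequality $(q(s+t)-(4s+2t))(q(s+t)-3)\ge 0$. This step is not carried out, and it already fails in the model case $f(v)=v^{q-1}$, where both $(f_3)$-inequalities are equalities: after eliminating the critical term, the remaining $f$-integral in $\tau_0^2\psi''(\tau_0)$ equals
\[
\frac{(s+t)(2_s^\ast-q)\bigl((s+t)q-3\bigr)}{q}\,\tau_0^{(s+t)q-3}\int_{\mathbb{R}^3}|u|^{q}\,dx>0,
\]
so it contributes with the \emph{wrong} sign. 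Hence $\psi''(\tau_0)<0$ cannot be read off from the $f$-integral alone; it must come from a quantitative balance against the strictly negative gradient/Poisson and $L^2$ contributions, and that balance is precisely what your sketch does not supply. A workable route is to show instead that $\tau\mapsto\tau^{-(4s+2t-3)}\mathcal{G}_\mu(u_\tau)$ is strictly decreasing: the $|D_su|^2$ and Poisson parts become constants, the $L^2$ part carries the strictly decreasing factor $\tau^{-2s}$, and the critical part is a negative constant times an increasing power; the $f$-part then reduces to checking that $w\mapsto w^{-(4s+2t)/(s+t)}\bigl((s+t)wf(w)-3F(w)\bigr)$ is nondecreasing, which is where $(f_3)$ enters in an essential way. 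Monotonicity of this quotient forces $\mathcal{G}_\mu(u_\tau)$ (hence $\psi'$) to have at most one zero, giving uniqueness.
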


Now, it is easy to check that $\mathcal{I}_{\mu}$ satisfies the mountain pass geometry.
\begin{lemma}\label{lem3-1}
$(i)$ there exist $\rho_0,\beta_0>0$ such that $\mathcal{I}_{\mu}(u)\geq\beta_0$ for all $u\in H^s(\mathbb{R}^3)$ with $\|u\|=\rho_0$;\\
$(ii)$ there exists $u_0\in H^s(\mathbb{R}^3)$ such that $\mathcal{I}_{\mu}(u_0)<0$.
\end{lemma}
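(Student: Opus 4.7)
My plan is to verify both items by standard mountain--pass computations, exploiting the growth bounds given by $(f_0)$--$(f_2)$ and the nonnegativity of the Riesz--Hartree coupling.

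For (i), I would first extract the pointwise bound $|F(\tau)|\le \varepsilon|\tau|^2 + C_\varepsilon|\tau|^{2_s^{\ast}}$, valid for every $\varepsilon>0$. The piece near zero comes from $(f_0)$, while the piece at infinity comes from $(g_1)$ applied to the decomposition $g(\tau)=f(\tau)+\tau^{2_s^{\ast}-1}$ (with $\kappa=1$), which forces $f(\tau)=o(\tau^{2_s^{\ast}-1})$ as $\tau\to+\infty$. Plugging this into $\mathcal{I}_\mu(u)$, dropping the nonnegative term $\tfrac14\int\phi_u^tu^2\,{\rm d}x$, and using the continuous embedding $H^s(\mathbb{R}^3)\hookrightarrow L^{2_s^{\ast}}(\mathbb{R}^3)$, if I choose $\varepsilon=\mu/2$ I obtain
$$\mathcal{I}_\mu(u)\ge \tfrac14\|u\|^2-C\|u\|^{2_s^{\ast}}$$
for an equivalent $H^s$--norm. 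Since $2_s^{\ast}>2$, this is bounded below by a positive $\beta_0$ on a sphere of small radius $\rho_0$.

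For (ii), I would use a Pohozaev-type dilation. Pick $u\in C_0^\infty(\mathbb{R}^3)$ nontrivial and nonnegative, and set $u_\tau(x):=\tau^{s+t}u(\tau x)$ for $\tau>0$. From $(-\Delta)^t\phi_{u_\tau}^t = u_\tau^2$ and the homogeneity of $(-\Delta)^t$ I get $\phi_{u_\tau}^t(x)=\tau^{2s}\phi_u^t(\tau x)$, and then a direct change of variables yields
\begin{align*}
\mathcal{I}_\mu(u_\tau) &= \tfrac{\tau^{4s+2t-3}}{2}\|D_s u\|_2^2+\tfrac{\mu\,\tau^{2s+2t-3}}{2}\|u\|_2^2+\tfrac{\tau^{4s+2t-3}}{4}\!\int\phi_u^tu^2\,{\rm d}x\\
&\qquad -\tau^{-3}\!\int F(\tau^{s+t}u)\,{\rm d}x-\tfrac{\tau^{(s+t)2_s^{\ast}-3}}{2_s^{\ast}}\|u\|_{2_s^{\ast}}^{2_s^{\ast}}.
\end{align*}
Since $f\ge 0$ by $(f_2)$ (so $F\ge 0$), the integral of $F$ can be discarded. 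The arithmetic identity $(s+t)2_s^{\ast}-3-(4s+2t-3)=\tfrac{2s(4s+2t-3)}{3-2s}$ together with $4s+2t>2s+2t>3$ shows
$$(s+t)2_s^{\ast}-3>4s+2t-3>2s+2t-3>0,$$
so the critical term dominates as $\tau\to+\infty$ and $\mathcal{I}_\mu(u_\tau)\to-\infty$. Taking $u_0:=u_\tau$ for $\tau$ large enough finishes (ii).

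There is no genuine obstacle here; the only point requiring a little care is the computation of $\phi_{u_\tau}^t$ and the ordering of the scaling exponents, for which the standing hypothesis $2s+2t>3$ from Theorem~\ref{thm1-1} is essential.
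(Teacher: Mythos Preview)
Your proposal is correct and follows the standard route that the paper implicitly relies on (the paper omits the proof as routine). In particular, your use of the Pohozaev dilation $u_\tau(x)=\tau^{s+t}u(\tau x)$ for (ii) is exactly the scaling the paper employs throughout (see Proposition~\ref{pro3-2} and the proof of Lemma~\ref{lem3-4}), and your growth estimate $|F(\tau)|\le\varepsilon\tau^2+C_\varepsilon|\tau|^{2_s^{\ast}}$ for (i), extracted from $(f_0)$ together with $(g_1)$, is the natural choice.
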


%\begin{proof}
%By $(f_1)$ and $(f_2)$, for any $\eta>0$, there exists $C_{\eta}>0$ such that
%\begin{equation}\label{equ3-4}
%f(t)\leq\eta|t|+C_{\eta}|t|^{2_s^{\ast}-1}\quad \text{and}\quad F(t)\leq\frac{\eta}{2}|t|^2+C_{\eta}|t|^{2_s^{\ast}}\quad \text{for any}\,\, t\in\mathbb{R}.
%\end{equation}
%Hence, choosing $\eta=\frac{\mu}{2}$ and by Sobolev inequality, we have that
%\begin{align*}
%\mathcal{I}_{\mu}(u)&\geq\frac{1}{2}\int_{\mathbb{R}^3}|D_su|^2\,{\rm d}x+\frac{\mu}{2}\int_{\mathbb{R}^3}|u|^2\,{\rm d}x-\frac{\eta}{2}\int_{\mathbb{R}^3}|u|^2\,{\rm d}x-C_{\eta}\int_{\mathbb{R}^3}|u|^{2_s^{\ast}}\,{\rm d}x\\
%&\geq\frac{1}{4}\|u\|^2-C_{\mu}\|u\|^{2_s^{\ast}},
%\end{align*}
%thus, there exists $\rho_0,\beta_0>0$ small enough such that $\mathcal{I}_{\mu}(u)\geq\beta_0$ for $\|u\|=\rho_0$.
%
%$(ii)$ For any $u\in H^s(\mathbb{R}^3)$ with $u\geq0$, set $u_{\tau}(x)=\tau^{(s+t)}u(\tau x)$ with $\tau>0$. Thus, by $(f_3)$, we deduce that
%\begin{align*}
%\mathcal{I}_{\mu}(u_{\tau})&\leq\frac{\tau^{(4s+2t-3)}}{2}\int_{\mathbb{R}^3}|D_su|^2\,{\rm d}x+\frac{\tau^{(2s+2t-3)}}{2}\int_{\mathbb{R}^3}\mu|u|^2\,{\rm d}x\\
%&+\frac{\tau^{(4s+2t-3)}}{4}\int_{\mathbb{R}^3}\phi_u^tu^2\,{\rm d}x-C\tau^{2_s^{\ast}(s+t)-3}\int_{\mathbb{R}^3}(u^{+})^{2_s^{\ast}}\,{\rm d}x.
%\end{align*}
%Since $4s+2t>3$ and so $4s+2t-3<2_s^{\ast}(s+t)-3$, we obtain that $\mathcal{I}_{\mu}(u_{\tau})\rightarrow-\infty$ as $\tau\rightarrow+\infty$. Hence, there exists $\tau_0>0$ large enough such that $\mathcal{I}_{\mu}(u_0)<0$, where $u_0=u_{\tau_0}$.
%\end{proof}

From Lemma \ref{lem3-1}, the mountain-pass level of $\mathcal{I}_{\mu}$ defined as follows
\begin{equation*}
c_{\mu}=\inf_{\gamma\in\Gamma_{\mu}}\sup_{t\in[0,1]}\mathcal{I}_{\mu}(\gamma(t))
\end{equation*}
where
\begin{equation*}
\Gamma_{\mu}=\Big\{\gamma\in C([0,1],H^s(\mathbb{R}^3))\,\,\Big|\,\, \gamma(0)=0,\,\, \mathcal{I}_{\mu}(\gamma(1))<0\Big\}
\end{equation*}
satisfies that $c_{\mu}>0$. Furthermore, by $(f_3)$, it is easy to verify that
\begin{equation}\label{equ3-2}
f'(\tau)\tau-(q-1)f(\tau)>0\,\,\text{and}\,\,  f(\tau)\tau-qF(\tau)>0 \,\,\text{for any}\,\,\tau>0.
\end{equation}
By using Lemma \ref{lem3-1} and \eqref{equ3-2}, we can show the equivalent characterization of mountain-pass level $c_{\mu}$.
\begin{lemma}\label{lem3-2}
\begin{equation*}
c_{\mu}=b_{\mu}.
\end{equation*}
\end{lemma}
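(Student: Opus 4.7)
The plan is to prove both directions $c_\mu \le b_\mu$ and $c_\mu \ge b_\mu$ by exploiting the scaling $u_\tau(x) = \tau^{s+t} u(\tau x)$ and Proposition \ref{pro3-2}(ii). A direct computation of the way each term of $\mathcal{I}_\mu$ rescales yields the key identity $\mathcal{G}_\mu(u_\tau) = \tau \frac{d}{d\tau}\mathcal{I}_\mu(u_\tau)$, so Proposition \ref{pro3-2}(ii) is equivalent to saying that $\tau \mapsto \mathcal{I}_\mu(u_\tau)$ has a unique critical point $\tau(u) > 0$, which is a strict global maximum. I will also note, using the hypothesis $2s+2t>3$ (hence $4s+2t-3>0$ and $2s+2t-3>0$) together with $(f_2)$ and $q > \frac{4s+2t}{s+t}$, that $\|u_\tau\|_{H^s}\to 0$ as $\tau\to 0^+$ while $\mathcal{I}_\mu(u_\tau) \to -\infty$ as $\tau \to \infty$ (the term $-\int F(u_\tau)\,dx$ scales with exponent $q(s+t)-3 > 4s+2t-3$).

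For $c_\mu \le b_\mu$, I will fix $u \in \mathcal{M}_\mu$ and pick $T>1$ with $\mathcal{I}_\mu(u_T)<0$. Defining $\gamma(t) := u_{tT}$ for $t \in (0,1]$ and $\gamma(0) := 0$ produces a continuous path in $\Gamma_\mu$. Since $u \in \mathcal{M}_\mu$, Proposition \ref{pro3-2}(ii) forces $\tau(u) = 1$, so
\[
\max_{t\in[0,1]} \mathcal{I}_\mu(\gamma(t)) \;=\; \max_{\tau \in [0,T]} \mathcal{I}_\mu(u_\tau) \;=\; \mathcal{I}_\mu(u).
\]
Taking the infimum over $u \in \mathcal{M}_\mu$ yields $c_\mu \le b_\mu$.

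For $c_\mu \ge b_\mu$, the strategy is to show every $\gamma \in \Gamma_\mu$ must cross $\mathcal{M}_\mu$. First, $(f_0)$ together with the Sobolev embedding $H^s \hookrightarrow L^{2_s^\ast}$ and the estimate $\int \phi_u^t u^2 \le C\|u\|_{H^s}^4$ from Lemma \ref{lem2-1}(iii) make every nonquadratic term of $\mathcal{G}_\mu$ an $o(\|u\|_{H^s}^2)$ quantity, so $\mathcal{G}_\mu(u)>0$ for $0<\|u\|_{H^s}$ small; by continuity of $\gamma$ at $0$, $\mathcal{G}_\mu(\gamma(t))>0$ for all small $t>0$. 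Next, set $u := \gamma(1) \ne 0$ and put $g(\tau) := \mathcal{I}_\mu(u_\tau)$. Then $g(\tau)\to 0$ as $\tau\to 0^+$, $g(\tau)\to-\infty$ as $\tau\to\infty$, and by Proposition \ref{pro3-2}(ii) the maximum is attained uniquely at $\tau(u)$ with $g(\tau(u))\ge b_\mu > 0$. Since $g(1) = \mathcal{I}_\mu(\gamma(1)) < 0$, the value $\tau=1$ cannot lie on the (nonnegative) increasing branch of $g$, so $\tau(u)<1$ and $g$ is strictly decreasing on $(\tau(u),\infty)$; thus $\mathcal{G}_\mu(u) = g'(1) < 0$.

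Combining these sign statements, the continuous function $t \mapsto \mathcal{G}_\mu(\gamma(t))$ is positive for small $t>0$ and negative at $t=1$, so by the intermediate value theorem there exists $t^\ast \in (0,1)$ with $\mathcal{G}_\mu(\gamma(t^\ast)) = 0$ and $\gamma(t^\ast) \ne 0$ (nonzero because it lies in the region where $\mathcal{G}_\mu$ has already changed sign, away from the small-norm zone). Hence $\gamma(t^\ast) \in \mathcal{M}_\mu$ and $\max_{t\in[0,1]} \mathcal{I}_\mu(\gamma(t)) \ge \mathcal{I}_\mu(\gamma(t^\ast)) \ge b_\mu$, giving $c_\mu \ge b_\mu$. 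The main obstacle is the implication $\mathcal{I}_\mu(u)<0 \Rightarrow \mathcal{G}_\mu(u)<0$ for $u\ne 0$; it is not true in general variational settings and only works here because of the uniqueness and maximality in Proposition \ref{pro3-2}(ii) combined with the correct asymptotics of $g(\tau)$ at the two ends, both of which rest on the structural hypothesis $\frac{4s+2t}{s+t}<q<2_s^\ast$ and $(f_3)$.
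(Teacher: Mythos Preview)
Your proposal is correct and follows the standard route the paper has in mind: the paper omits the proof, only indicating that it follows from Lemma~\ref{lem3-1} (mountain-pass geometry) and \eqref{equ3-2} (the monotonicity consequences of $(f_3)$), and your argument via the scaling $u_\tau$, Proposition~\ref{pro3-2}(ii), and the intermediate value theorem on $\mathcal{G}_\mu(\gamma(t))$ is exactly the expected fill-in. The only place to tighten is the justification that $\gamma(t^\ast)\neq 0$: rather than the informal ``away from the small-norm zone'', take $t^\ast$ to be the largest $t\in[0,1]$ with $\mathcal{G}_\mu(\gamma(t))=0$; since $\mathcal{G}_\mu(\gamma(1))<0$ and $\mathcal{G}_\mu>0$ on a punctured neighborhood of the origin in $H^s(\mathbb{R}^3)$, one has $t^\ast<1$ and $\gamma(t^\ast)\neq 0$.
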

%\begin{proof}
%Checking the proof of Lemma 2.8 in , we only need to verify that $\gamma([0,1])\cap\mathcal{M}_{\mu}\neq\emptyset$. Indeed, by Lemma \ref{lem3-1}, we see that if $u\in H^s(\mathbb{R}^3)\backslash\{0\}$, is interior to or on $\mathcal{M}_{\mu}$, then
%\begin{align*}
%&\frac{4s+2t-3}{2}\int_{\mathbb{R}^3}|D_su|^2\,{\rm d}x+\frac{2s+2t-3}{2}\int_{\mathbb{R}^3}\mu|u|^2\,{\rm d}x\\
%&+\frac{4s+2t-3}{4}\int_{\mathbb{R}^3}\phi_{u}^tu^2\,{\rm d}x\geq\int_{\mathbb{R}^3}\Big((s+t)f(u)u-3F(u)\Big)\,{\rm d}x+\frac{2_s^{\ast}(s+t)-3}{2_s^{\ast}}\int_{\mathbb{R}^3}(u^{+})^{2_s^{\ast}}\,{\rm d}x
%\end{align*}
%and
%\begin{align*}
%(4s+2t-3)\mathcal{I}_{\mu}(u)&=\mathcal{G}_{\mu}(u)+s\int_{\mathbb{R}^3}\mu|u|^2\,{\rm d}x+\frac{(2_s^{\ast}-4)s+(2_s^{\ast}-2)t}{2_s^{\ast}}\int_{\mathbb{R}^3}(u^{+})^{2_s^{\ast}}\,{\rm d}x\\
%&+\int_{\mathbb{R}^3}\Big((s+t)f(u)u-(4s+2t)F(u)\Big)\,{\rm d}x>0.
%\end{align*}
%Hence $\gamma$ crosses $\mathcal{M}_{\mu}$ since $\gamma(0)=0$, $\mathcal{I}_{\mu}(\gamma(1))<0$ which implies that $\mathcal{G}_{\mu}(\gamma(1))<0$, combining with $\mathcal{G}_{\mu}(\gamma(t))\geq0$. Therefore,
%\begin{equation*}
%\max_{t\in[0,1]}\mathcal{I}_{\mu}(\gamma(t))\geq\inf_{\mathcal{M}_{\mu}}\mathcal{I}_{\mu}(w)=b_{\mu}
%\end{equation*}
%and then $c_{\mu}\geq b_{\mu}$.
%\end{proof}
In order to obtain the boundedness of $(PS)$ sequence, we will construct a $(PS)$ sequence $\{u_n\}$ for $\mathcal{I}_{\mu}$ at the level $c_{\mu}$ that satisfies $\mathcal{G}_{\mu}(u_n)\rightarrow0$ as $n\rightarrow+\infty$ i.e.,
\begin{lemma}\label{lem3-3}
There exists a sequence $\{u_n\}$ in $H^s(\mathbb{R}^3)$ such that as $n\rightarrow+\infty$,
\begin{equation}\label{equ3-5}
\mathcal{I}_{\mu}(u_n)\rightarrow c_{\mu},\quad\mathcal{I}'_{\mu}(u_n)\rightarrow0,\quad \mathcal{G}_{\mu}(u_n)\rightarrow0.
\end{equation}
\end{lemma}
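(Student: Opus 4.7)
The plan is to adapt L.\ Jeanjean's auxiliary-functional device to the Pohozaev--Nehari setting. The key identity underlying the approach is that the dilation $u_\tau(x):=\tau^{s+t}u(\tau x)$ satisfies
\[
\frac{d}{d\tau}\mathcal{I}_\mu(u_\tau)\Big|_{\tau=1}=\mathcal{G}_\mu(u),
\]
as one verifies by differentiating each of the five terms in $\mathcal{I}_\mu$ under the scaling laws $\|D_s u_\tau\|_2^2=\tau^{4s+2t-3}\|D_s u\|_2^2$, $\|u_\tau\|_2^2=\tau^{2s+2t-3}\|u\|_2^2$, $\int\phi_{u_\tau}^t u_\tau^2=\tau^{4s+2t-3}\int\phi_u^t u^2$, etc. Thus the constraint $\mathcal{G}_\mu(u_n)\to 0$ is exactly the derivative of $\mathcal{I}_\mu$ along the one-parameter dilation group vanishing at the identity, and this suggests working on the enlarged product space.

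Next I would introduce the $C^1$ auxiliary functional
\[
\Phi_\mu(u,\theta):=\mathcal{I}_\mu(H(\theta)u),\qquad H(\theta)u(x):=e^{(s+t)\theta}u(e^\theta x),
\]
on $H^s(\mathbb{R}^3)\times\mathbb{R}$ with the product norm $\|(u,\theta)\|:=(\|u\|^2+|\theta|^2)^{1/2}$. Since $H(\theta)$ is a bounded one-parameter group on $H^s(\mathbb{R}^3)$ and Lemma~\ref{lem3-1} already furnishes the mountain-pass geometry along the slice $\theta=0$, one checks that $\Phi_\mu$ itself has mountain-pass geometry with level
\[
\widetilde c_\mu:=\inf_{\widetilde\gamma\in\widetilde\Gamma_\mu}\sup_{t\in[0,1]}\Phi_\mu(\widetilde\gamma(t)),\quad \widetilde\Gamma_\mu:=\{\widetilde\gamma\in C([0,1],H^s\times\mathbb{R})\mid\widetilde\gamma(0)=(0,0),\ \Phi_\mu(\widetilde\gamma(1))<0\}.
\]
The identity $\widetilde c_\mu=c_\mu$ follows by a direct path correspondence: every $\gamma\in\Gamma_\mu$ lifts to $(\gamma,0)\in\widetilde\Gamma_\mu$ with the same supremum, and conversely any $\widetilde\gamma=(\eta,\theta)\in\widetilde\Gamma_\mu$ projects to $\gamma(t):=H(\theta(t))\eta(t)\in\Gamma_\mu$ with $\mathcal{I}_\mu(\gamma(t))=\Phi_\mu(\widetilde\gamma(t))$.

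Then I would apply the Ekeland variational principle to a minimizing sequence of paths in $\widetilde\Gamma_\mu$ to produce $\{(v_n,\theta_n)\}\subset H^s\times\mathbb{R}$ with
\[
\Phi_\mu(v_n,\theta_n)\to c_\mu,\qquad \Phi'_\mu(v_n,\theta_n)\to 0\text{ in }(H^s\times\mathbb{R})^{\ast},\qquad \theta_n\to 0.
\]
Setting $u_n:=H(\theta_n)v_n$, direct computation gives the scaling identities
\[
\mathcal{I}_\mu(u_n)=\Phi_\mu(v_n,\theta_n),\qquad \partial_\theta\Phi_\mu(v_n,\theta_n)=\mathcal{G}_\mu(u_n),
\]
\[
\langle\mathcal{I}'_\mu(u_n),w\rangle=\langle\partial_u\Phi_\mu(v_n,\theta_n),H(-\theta_n)w\rangle,\qquad\forall\,w\in H^s(\mathbb{R}^3),
\]
from which \eqref{equ3-5} follows, using that $H(-\theta_n)$ remains a uniformly bounded operator on $H^s(\mathbb{R}^3)$ because $\theta_n\to 0$.

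The main obstacle is twofold: first, verifying the equality $\widetilde c_\mu=c_\mu$ with a rigorous path-deformation argument (straightforward but requiring that $\theta\mapsto H(\theta)u$ is continuous into $H^s(\mathbb{R}^3)$ for each fixed $u$), and second, arranging the normalization $\theta_n\to 0$ via the Jeanjean trick of selecting a minimizing sequence of paths whose endpoints lie in $\{\theta=0\}$ before invoking Ekeland's principle on the full product space. Once these two functional-analytic points are established, the conclusion reduces to the routine verification of the scaling laws listed above.
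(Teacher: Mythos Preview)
Your proposal is correct and is precisely the standard Jeanjean auxiliary-functional argument that this lemma is built around; the paper itself omits the proof but, given its Nehari--Pohozaev framework (Proposition~\ref{pro3-2}, Lemma~\ref{lem3-2}) and the references to \cite{Teng2,Teng3}, the intended argument is exactly the one you outline. The scaling identities you record are all correct, the derivative computation $\frac{d}{d\tau}\mathcal{I}_\mu(u_\tau)|_{\tau=1}=\mathcal{G}_\mu(u)$ matches the definition of $\mathcal{G}_\mu$ in the paper, and your description of how to force $\theta_n\to 0$ (lift minimizing $\gamma_n\in\Gamma_\mu$ to $(\gamma_n,0)\in\widetilde\Gamma_\mu$, then apply Ekeland close to this path) is the right mechanism.
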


\begin{lemma}\label{lem3-4}
Every sequence $\{u_n\}\subset H^s(\mathbb{R}^3)$ satisfying \eqref{equ3-5} is bounded in $H^s(\mathbb{R}^3)$.
\end{lemma}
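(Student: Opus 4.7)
The plan is to find a single scalar $\alpha\ge 0$ such that the combination $\mathcal I_\mu(u_n)-\alpha\mathcal G_\mu(u_n)$ dominates a positive multiple of $\|u_n\|_{H^s}^2$. Since \eqref{equ3-5} forces the left-hand side to converge to $c_\mu$, this immediately yields boundedness of $\{u_n\}$. Expanding the combination term by term, I would read off the coefficients of $\|D_su\|_2^2$, $\mu\|u\|_2^2$, $\int_{\mathbb R^3}\phi_u^tu^2\,{\rm d}x$, $\int_{\mathbb R^3}(u^+)^{2_s^*}\,{\rm d}x$ and the nonlinearity integrand, each of which is affine in $\alpha$.

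The kinetic coefficient $\frac12-\alpha\frac{4s+2t-3}{2}$ should be strictly positive, forcing $\alpha<\frac{1}{4s+2t-3}$; under $2s+2t>3$ the coefficients of $\mu\|u\|_2^2$ and $\int\phi_u^tu^2\,{\rm d}x$ are then automatically non-negative. The critical-term coefficient $-\frac{1}{2_s^*}+\alpha\frac{4s+2t-3}{2}$ should be non-negative, forcing $\alpha\ge\frac{3-2s}{3(4s+2t-3)}$. The remaining integrand is $\alpha(s+t)f(u_n)u_n-(1+3\alpha)F(u_n)$, to be handled via $(f_3)$: the pointwise inequality $f(\tau)\tau\ge qF(\tau)$ for $\tau\ge 0$ follows from the monotonicity of $f(\tau)/\tau^{q-1}$ by writing $F(\tau)=\int_0^\tau s^{q-1}\,\frac{f(s)}{s^{q-1}}\,{\rm d}s$ and pulling out the supremum (this is the first half of \eqref{equ3-2}). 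Using it together with $F\ge 0$, the integrand dominates $[\alpha(q(s+t)-3)-1]F(u_n)$, which is non-negative provided $\alpha\ge\frac{1}{q(s+t)-3}$.

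The whole exercise reduces to verifying that the admissible range
\begin{equation*}
\max\!\Big\{\tfrac{3-2s}{3(4s+2t-3)},\,\tfrac{1}{q(s+t)-3}\Big\}\le\alpha<\tfrac{1}{4s+2t-3}
\end{equation*}
is non-empty. The first lower bound lies strictly below the upper one because $s>0$; the second lies strictly below it precisely because $q(s+t)>4s+2t$, which is the lower bound on $q$ assumed in $(f_2)$. This is the only place where the specific lower restriction on $q$ enters essentially. For any such $\alpha$ one then obtains $\mathcal I_\mu(u_n)-\alpha\mathcal G_\mu(u_n)\ge C\|u_n\|_{H^s}^2$; combined with $\mathcal I_\mu(u_n)\to c_\mu$ and $\alpha\mathcal G_\mu(u_n)\to 0$, this forces $\{u_n\}$ to be bounded in $H^s(\mathbb R^3)$. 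The main obstacle is not analytic but bookkeeping: four sign constraints on the single parameter $\alpha$ must be balanced simultaneously, and the conditions $2s+2t>3$ and $q>\frac{4s+2t}{s+t}$ must be used essentially in checking compatibility.
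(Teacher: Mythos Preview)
Your proposal is correct and follows essentially the same approach as the paper. The paper simply fixes the specific choice $\alpha=\frac{1}{q(s+t)-3}$, which is the left endpoint of your admissible range (note that $q<2_s^{\ast}$ forces $\frac{3-2s}{3(4s+2t-3)}<\frac{1}{q(s+t)-3}$, so your maximum is always the second term); with this $\alpha$ the nonlinear integrand becomes exactly $\frac{s+t}{q(s+t)-3}\big(f(u_n)u_n-qF(u_n)\big)\ge 0$ by \eqref{equ3-2}, and the kinetic and $L^2$ coefficients are strictly positive precisely because $q>\frac{4s+2t}{s+t}$, yielding the same bound $c_\mu+o_n(1)\ge C\|u_n\|_{H^s}^2$.
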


\begin{proof}
By \eqref{equ3-5}, \eqref{equ3-2} and $q>\frac{4s+2t}{s+t}$, we have that
\begin{align*}
&c_{\mu}+o_n(1)=\mathcal{I}_{\mu}(u_n)-\frac{1}{q(s+t)-3}\mathcal{G}_{\mu}(u_n)\\
&=\frac{(q-4)s+(q-2)t}{2(q(s+t)-3)}\int_{\mathbb{R}^3}|D_su_n|^2\,{\rm d}x+\frac{(q-2)(s+t)}{2(q(s+t)-3)}\mu\int_{\mathbb{R}^3}|u_n|^2\,{\rm d}x\\
&+\frac{(q-4)s+(q-2)t}{4(q(s+t)-3)}\int_{\mathbb{R}^3}\phi_{u_n}^tu_n^2\,{\rm d}x+\frac{(2_s^{\ast}-q)(s+t)}{2_s^{\ast}(q(s+t)-3)}\int_{\mathbb{R}^3}(u_n^{+})^{2_s^{\ast}}\,{\rm d}x\\
&+\frac{s+t}{q(s+t)-3}\int_{\mathbb{R}^3}\Big(f(u_n)u_n-qF(u_n)\Big)\,{\rm d}x.
\end{align*}
Hence, sequence $\{u_n\}$ is bounded in $H^s(\mathbb{R}^3)$.
\end{proof}

For obtaining the compactness of the above bounded sequence $\{u_n\}$, we need the estimate of the Mountain-Pass level $c_{\mu}$ which is given as the following Lemma.

\begin{lemma}\label{lem3-4}
\begin{equation*}
c_{\mu}<\frac{s}{3}\mathcal{S}_s^{\frac{3}{2s}}
\end{equation*}
if in the case $s>\frac{3}{4}$, $q\in(2_s^{\ast}-2,2_s^{\ast})$ for all $\lambda>0$ or $q\in(\frac{4s+2t}{s+t},2_s^{\ast}-2]$ for $\lambda>0$ large; if in the case $\frac{1}{2}<s\leq\frac{3}{4}$, $q\in(\frac{4s+2t}{s+t},2_s^{\ast})$ for any $\lambda>0$, where $\mathcal{S}_s$ is the best Sobolev consatnt for the embedding $\mathcal{D}^{s,2}(\mathbb{R}^3)\hookrightarrow L^{2_s^{\ast}}(\mathbb{R}^3)$.
\end{lemma}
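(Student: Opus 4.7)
The strategy is the classical one: construct an explicit mountain-pass path using a concentrating Aubin--Talenti family and show that the lower-bound $f(\tau) \geq \lambda\tau^{q-1}$ produces a negative lower-order correction strong enough to pull $c_\mu$ strictly below the critical threshold $(s/3)\mathcal{S}_s^{3/(2s)}$.

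Let $U(x) = \kappa(1+|x|^2)^{-(3-2s)/2}$ be the (up to rescaling) positive extremal of $\mathcal{S}_s$, let $\eta \in C_c^\infty(\mathbb{R}^3)$ equal $1$ near the origin with support in a small ball around $0 \in \mathcal{M}$, and set $u_\varepsilon(x) = \eta(x)\,\varepsilon^{-(3-2s)/2}U(x/\varepsilon)$. I would first record the now-standard asymptotics for the fractional Talenti family:
\[
\|D_s u_\varepsilon\|_2^2 = \mathcal{S}_s^{3/(2s)} + O(\varepsilon^{3-2s}),\qquad \|u_\varepsilon\|_{2_s^{\ast}}^{2_s^{\ast}} = \mathcal{S}_s^{3/(2s)} + O(\varepsilon^3),
\]
together with $\|u_\varepsilon\|_q^q \sim \varepsilon^{3-q(3-2s)/2}$ and
\[
\|u_\varepsilon\|_2^2 \sim \begin{cases}\varepsilon^{2s}, & s > 3/4,\\ \varepsilon^{2s}|\log\varepsilon|, & s = 3/4,\\ \varepsilon^{3-2s}, & s < 3/4.\end{cases}
\]
Combining Lemma \ref{lem2-1}$(iii)$ with a direct change of variables yields $\int \phi_{u_\varepsilon}^t u_\varepsilon^2\,dx = O(\varepsilon^{4s+2t-3})$; under the standing assumption $2s+2t>3$ this satisfies $\varepsilon^{4s+2t-3} = o(\varepsilon^{2s})$, so the nonlocal Poisson contribution is never the dominant error.

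Since $\mathcal{I}_\mu(\tau u_\varepsilon) \to -\infty$ as $\tau \to \infty$ because of the critical term, the straight-line path from $0$ to $\tau_0 u_\varepsilon$ (with $\tau_0$ large) lies in $\Gamma_\mu$, so $c_\mu \leq \max_{\tau \geq 0} \mathcal{I}_\mu(\tau u_\varepsilon)$. Writing $A = \|D_s u_\varepsilon\|_2^2 + \mu\|u_\varepsilon\|_2^2$ and $D = \|u_\varepsilon\|_{2_s^{\ast}}^{2_s^{\ast}}$, the quadratic-minus-critical function $\tfrac{\tau^2}{2}A - \tfrac{\tau^{2_s^{\ast}}}{2_s^{\ast}}D$ is maximized with value $\tfrac{s}{3}A^{3/(2s)}/D^{(3-2s)/(2s)}$. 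Plugging in the expansions, using $f(\tau)\geq \lambda\tau^{q-1}$, and noting that the actual maximizer $\tau_\varepsilon$ stays bounded away from $0$ and $\infty$ as $\varepsilon \to 0$, one obtains
\[
\max_{\tau \geq 0}\mathcal{I}_\mu(\tau u_\varepsilon) \leq \tfrac{s}{3}\mathcal{S}_s^{3/(2s)} + O(\varepsilon^{3-2s}) + O(\|u_\varepsilon\|_2^2) + O(\varepsilon^{4s+2t-3}) - c_0\lambda\,\|u_\varepsilon\|_q^q,
\]
with $c_0 > 0$ independent of $\varepsilon$ and $\lambda$.

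The desired strict inequality now reduces to a matching of powers of $\varepsilon$. The worst positive error is $\varepsilon^{3-2s}$: indeed for $s \geq 3/4$ one has $\|u_\varepsilon\|_2^2 = O(\varepsilon^{2s})$ with $2s \geq 3-2s$, while for $s < 3/4$ the mass term is itself $O(\varepsilon^{3-2s})$, and in all cases the Poisson exponent $4s+2t-3$ strictly exceeds $2s$. The negative subcritical contribution scales like $\lambda\varepsilon^{3-q(3-2s)/2}$, and strictly beats $\varepsilon^{3-2s}$ for every $\lambda > 0$ and all small $\varepsilon$ provided $3-q(3-2s)/2 < 3-2s$, i.e.\ $q > 4s/(3-2s) = 2_s^{\ast}-2$; this gives the first regime of the lemma. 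When $s > 3/4$ and $q \leq 2_s^{\ast}-2$ the exponents are reversed, but choosing $\varepsilon$ at the balance scale $\varepsilon \sim \lambda^{-2/((q-2)(3-2s))}$ turns the required inequality into one depending only on a product of absolute constants times $\lambda$ to a positive power, which is achieved by taking $\lambda$ large --- this is the second regime. In the remaining range $1/2 < s \leq 3/4$ one has $2_s^{\ast}-2 \leq 2 < (4s+2t)/(s+t) \leq q$, so we are automatically in the first regime. The principal technical obstacle is controlling the nonlocal Poisson interaction in terms of a positive power of $\varepsilon$; the hypothesis $2s+2t>3$ was tailored precisely to make it subdominant.
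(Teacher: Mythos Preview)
Your approach is the same as the paper's: test with a truncated Aubin--Talenti bubble, isolate the principal critical part giving $(s/3)\mathcal S_s^{3/(2s)}$, and compare the subcritical gain $\lambda\|u_\varepsilon\|_q^q$ against the lower-order positive errors. The only cosmetic difference is that the paper uses the Nehari--Pohozaev dilation $u_\delta^\tau(x)=\tau^{s+t}u_\delta(\tau x)$ rather than the scalar path $\tau\mapsto\tau u_\varepsilon$; both produce the same leading term.

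There is, however, a concrete error in your bookkeeping: the $L^2$ asymptotics of the truncated bubble are \emph{swapped}. In dimension $3$ one has
\[
\|u_\varepsilon\|_2^2 \sim
\begin{cases}
\varepsilon^{3-2s}, & s>3/4,\\
\varepsilon^{3/2}|\log\varepsilon|, & s=3/4,\\
\varepsilon^{2s}, & s<3/4,
\end{cases}
\]
not the version you wrote. This matters in the range $\tfrac12<s<\tfrac34$: there $2s<3-2s$, so the mass term $O(\varepsilon^{2s})$ is the \emph{largest} positive error, not $\varepsilon^{3-2s}$ as you claim. Your ``first regime'' condition $q>2_s^\ast-2$ only guarantees the subcritical term beats $\varepsilon^{3-2s}$, which is insufficient. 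The repair is immediate once the correct exponent is used: one needs $3-\tfrac{q(3-2s)}{2}<2s$, i.e.\ $q>2$, and this follows from $q>\tfrac{4s+2t}{s+t}>2$. The paper compares against $\delta^{2s}$ in this regime for exactly this reason.

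Two smaller imprecisions: the formula $\|u_\varepsilon\|_q^q\sim\varepsilon^{3-q(3-2s)/2}$ holds only when $q>\tfrac{3}{3-2s}$, and the ``direct change of variables'' giving $\int\phi_{u_\varepsilon}^t u_\varepsilon^2=O(\varepsilon^{4s+2t-3})$ tacitly requires $U\in L^{12/(3+2t)}$, i.e.\ $8s+2t<9$, which can fail for $s,t$ close to $1$. Neither actually breaks the conclusion (in the borderline cases the Poisson term is $O(\varepsilon^{2(3-2s)})$, still subdominant), but the paper handles them by the explicit case analysis in \eqref{equ3-10} rather than by a single formula.
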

\begin{proof}
Let
\begin{equation*}
u_{\delta}(x)=\psi(x)U_{\delta}(x),\quad x\in\mathbb{R}^3,
\end{equation*}
where $U_{\delta}(x)=\delta^{-\frac{3-2s}{2}}u^{\ast}(x/\delta)$, $u^{\ast}(x)=\frac{\widetilde{u}(x/\mathcal{S}_s^{\frac{1}{2s}})}{\|\widetilde{u}\|_{2_s^{\ast}}}$, $\kappa\in\mathbb{R}\backslash\{0\}$, $\mu>0$ and $x_0\in\mathbb{R}^3$ are fixed constants, $\widetilde{u}(x)=\kappa(\mu^2+|x-x_0|^2)^{-\frac{3-2s}{2}}$, and $\psi\in C^{\infty}(\mathbb{R}^3)$ such that $0\leq\psi\leq1$ in $\mathbb{R}^3$, $\psi(x)\equiv1$ in $B_{R}$ and $\psi\equiv0$ in $\mathbb{R}^3\backslash B_{2R}$. From Proposition 21 and Proposition 22 in \cite{SV}, Lemma 3.3 in \cite{Teng}, we know that
\begin{equation}\label{equ3-8}
\int_{\mathbb{R}^{3}}|D_su_{\delta}(x)|^2\,{\rm d}x\leq\mathcal{S}_s^{\frac{3}{2s}}+O(\delta^{3-2s}),
\end{equation}
\begin{equation}\label{equ3-9}
\int_{\mathbb{R}^{3}}|u_{\delta}(x)|^{2_s^{\ast}}\,{\rm d}x=\mathcal{S}_s^{\frac{3}{2s}}+O(\delta^3),
\end{equation}
and
\begin{equation}\label{equ3-10}
\int_{\mathbb{R}^3}|u_{\delta}(x)|^p\,{\rm d}x=\left\{
\begin{array}{ll}
O(\delta^{\frac{(2-p)3+2sp}{2}})&\hbox{$p>\frac{3}{3-2s}$,} \\
O(\delta^{\frac{(2-p)3+2sp}{2}}|\log\delta|) & \hbox{$p=\frac{3}{3-2s}$,} \\
O(\delta^{\frac{3-2s}{2}p}) & \hbox{$p<\frac{3}{3-2s}$.}
\end{array}
\right.
\end{equation}
Here $a_{\delta}=O(b_{\delta})$ means that $C_1\leq \frac{a_{\delta}}{b_{\delta}}\leq C_2$ for some $C_1, C_2>0$, independent of $\delta$.

Set $u_{\delta}^{\tau}(x)=\tau^{s+t} u_{\delta}(\tau x)$ for any $\tau\geq0$, by $(f_2)$, we deduce that
\begin{align*}
&\mathcal{I}_{\mu}(u_{\delta}^{\tau})\leq h_{\delta}(\tau):=\frac{\tau^{4s+2t-3}}{2}\int_{\mathbb{R}^3}|D_su_{\delta}|^2\,{\rm d}x+\frac{\tau^{2s+2t-3}}{2}\int_{\mathbb{R}^3}\mu |u_{\delta}|^2\,{\rm d}x\\
&+\frac{\tau^{4s+2t-3}}{4}\int_{\mathbb{R}^3}\phi_{u_{\delta}}^tu_{\delta}^2\,{\rm d}x-\lambda\frac{\tau^{q(s+t)-3}}{q}\int_{\mathbb{R}^3}|u_{\delta}|^q\,{\rm d}x-\frac{\tau^{2_s^{\ast}(s+t)-3}}{2_s^{\ast}}\int_{\mathbb{R}^3}|u_{\delta}|^{2_s^{\ast}}\,{\rm d}x.
\end{align*}
Since $h_{\delta}(\tau)\rightarrow-\infty$ as $\tau\rightarrow+\infty$, we have that $\sup\{h_{\delta}(\tau):\,\, \tau\geq0\}=h_{\delta}(\tau_{\delta})$ for some $\tau_{\delta}>0$. Hence, $\tau_{\delta}$ verifies the following equality:
\begin{align}\label{equ3-9}
&\frac{4s+2t-3}{2}\tau_{\delta}^{2s}\int_{\mathbb{R}^3}|D_su_{\delta}|^2\,{\rm d}x+\frac{2s+2t-3}{2}\int_{\mathbb{R}^3}\mu |u_{\delta}|^2\,{\rm d}x\nonumber\\
&+\frac{4s+2t-3}{4}\tau_{\delta}^{2s}\int_{\mathbb{R}^3}\phi_{u_{\delta}}^tu_{\delta}^2\,{\rm d}x\nonumber\\
&=\frac{\lambda(q(s+t)-3)}{q}\tau_{\delta}^{(q-2)(s+t)}\int_{\mathbb{R}^3}|u_{\delta}|^q\,{\rm d}x+\frac{(2_s^{\ast}(s+t)-3)}{2_s^{\ast}}\tau_{\delta}^{(2_s^{\ast}-2)(s+t)}\int_{\mathbb{R}^3}|u_{\delta}|^{2_s^{\ast}}\,{\rm d}x.
\end{align}
We claim that $\{\tau_{\delta}\}$ is bounded from below by a positive constant for $\delta$ small. Otherwise, there exists a sequence $\delta_n\rightarrow0$ such that $\tau_{\delta_n}\rightarrow0$ as $n\rightarrow+\infty$. Thus $0<c_{\mu}\leq\sup_{\tau\geq0}\mathcal{I}_{\mu}(u_{\delta_n}^{\tau})\leq\sup_{\tau\geq0}h_{\delta_n}(\tau)=h_{\delta_n}(\tau_{\delta_n})\rightarrow0$ as $n\rightarrow\infty$, a contradiction. So there exists a constant $C_0>0$ independent of $\delta$ such that $\tau_{\delta}\geq C_0$. Using the similar argument in \eqref{equ3-9}, we can show that the sequence $\{\tau_{\delta}\}$ is bounded from above by a constant $C$ independent of $\delta$. Thus $0<C_0\leq\tau_{\delta}\leq C$ for $\delta$ small.

Let $g_{\delta}(\tau)=\frac{\tau^{4s+2t-3}}{2}\int_{\mathbb{R}^3}|D_su_{\delta}|^2\,{\rm d}x-\frac{\tau^{2_s^{\ast}(s+t)-3}}{2_s^{\ast}}\int_{\mathbb{R}^3}|u_{\delta}|^{2_s^{\ast}}\,{\rm d}x$, then we get for some universal constant $C>0$ so that
\begin{align}\label{equ3-10}
\mathcal{I}_{\mu}(u_{\delta}^{\tau})&\leq \sup_{\tau\geq0}g_{\delta}(\tau)+C\int_{\mathbb{R}^3}\mu|u_{\delta}|^2\, {\rm d}x+C\int_{\mathbb{R}^3}\phi_{u_{\delta}}^tu_{\delta}^2\,{\rm d}x-C\lambda\int_{\mathbb{R}^3}|u_{\delta}|^{q}\, {\rm d}x.
\end{align}
Directly computation, we get that $\frac{4s+2t-3}{2}\frac{2_s^{\ast}}{2_s^{\ast}(s+t)-3}=1$ and $\frac{2_s^{\ast}(s+t)-3}{(2_s^{\ast}-4)s+(2_s^{\ast}-2)t}=\frac{3}{2s}$. Thus, by \eqref{equ3-8}, we deduce that
\begin{align*}
\sup_{\tau\geq0}g_{\delta}(\tau)&=g_{\delta}(\tau_0)=\frac{s}{3}\frac{\Big(\int_{\mathbb{R}^3}|D_su_{\delta}|^2\,{\rm d}x\Big)^{\frac{3}{2s}}}{\Big(\int_{\mathbb{R}^3}|u_{\delta}|^{2_s^{\ast}}\,{\rm d}x\Big)^{\frac{3-2s}{3}}}\leq\frac{s}{3}\frac{(\mathcal{S}_s^{\frac{3}{2s}}+O(\delta^{3-2s}))^{\frac{3}{2s}}}{(\mathcal{S}_s^{\frac{3}{2s}}+O(\delta^{3}))^{\frac{3-2s}{3}}}\\
&\leq\frac{s}{3} \mathcal{S}_s^{\frac{3}{2s}}+O(\delta^{3-2s}),
\end{align*}
where
\begin{align*}
\tau_0&=\Big(\frac{4s+2t-3}{2}\frac{2_s^{\ast}}{2_s^{\ast}(s+t)-3}\frac{\int_{\mathbb{R}^3}|D_su_{\delta}|^2\,{\rm d}x}{\int_{\mathbb{R}^3}|u_{\delta}|^{2_s^{\ast}}\,{\rm d}x}\Big)^{\frac{1}{(2_s^{\ast}-4)s+(2_s^{\ast}-2)t}}\\
&=\Big(\frac{\int_{\mathbb{R}^3}|D_su_{\delta}|^2\,{\rm d}x}{\int_{\mathbb{R}^3}|u_{\delta}|^{2_s^{\ast}}\,{\rm d}x}\Big)^{\frac{1}{(2_s^{\ast}-4)s+(2_s^{\ast}-2)t}}.
\end{align*}
By $(i)$ of Lemma \ref{lem2-1}, \eqref{equ3-8} and \eqref{equ3-9}, using the elementary inequality $(a+b)^{\alpha}\leq a^{\alpha}+\alpha (a+b)^{\alpha-1}b$, $\alpha\geq1$ and $a,b\geq0$, we deduce that
\begin{align*}
\mathcal{I}_{\mu}(u_{\delta}^{\tau})&\leq \frac{s}{3}\mathcal{S}_s^{\frac{3}{2s}}+C \delta^{3-2s}+C\int_{\mathbb{R}^3}\mu|u_{\delta}|^2\, {\rm d}x+C\|u_{\delta}\|_{L^{\frac{12}{3+2t}}}^4-C\lambda\int_{\mathbb{R}^3}|u_{\delta}|^{q}\, {\rm d}x.
\end{align*}

$\bullet$ In the case $s>\frac{3}{4}$, by \eqref{equ3-8}, we deduce that
\begin{equation*}
\mathcal{I}_{\mu}(u_{\delta}^{\tau})\leq\frac{s}{3}\mathcal{S}_s^{\frac{3}{2s}}+C\delta^{3-2s}+C\|u_{\delta}\|_{L^{\frac{12}{3+2t}}}^4-C\lambda\int_{\mathbb{R}^3}|u_{\delta}|^{q}\, {\rm d}x.
\end{equation*}
In view of \eqref{equ3-8}, we have that
\begin{align*}
\lim_{\delta\rightarrow0^{+}}\frac{\Big(\int_{\mathbb{R}^3}|u_{\delta}|^{\frac{12}{3+2t}}\,{\rm d}x\Big)^{\frac{3+2t}{3}}}{\delta^{3-2s}}\leq\left\{
                                                                                                                            \begin{array}{ll}
                                                                                                                              \lim\limits_{\delta\rightarrow0^{+}}\frac{O(\delta^{2t+4s-3})}{\delta^{3-2s}}=0, & \hbox{$\frac{12}{3+2t}>\frac{3}{3-2s}$,} \\
                                                                                                                              \lim\limits_{\delta\rightarrow0^{+}}\frac{O(\delta^{2t+4s-3}|\log\delta|^{\frac{3+2t}{3}})}{\delta^{3-2s}}=0, & \hbox{$\frac{12}{3+2t}=\frac{3}{3-2s}$,} \\
                                                                                                                             \lim\limits_{\delta\rightarrow0^{+}} \frac{O(\delta^{2(3-2s)})}{\delta^{3-2s}}=0, & \hbox{$\frac{12}{3+2t}<\frac{3}{3-2s}$,}
                                                                                                                            \end{array}
                                                                                                                          \right.
\end{align*}
and noting that $2s-\frac{3-2s}{2}q<0$ if $\frac{4s}{3-2s}< q<\frac{6}{3-2s}$, we have
\begin{align*}
\lim_{\delta\rightarrow0^{+}}\lambda\frac{\int_{\mathbb{R}^3}|u_{\delta}|^q\,{\rm d}x}{\delta^{3-2s}}=
\left\{
  \begin{array}{ll}
\lim\limits_{\delta\rightarrow0}\lambda\frac{O(\delta^{3-\frac{3-2s}{2}q})}{\delta^{3-2s}}=+\infty, & \hbox{$\frac{4s}{3-2s}<q<\frac{6}{3-2s}$,} \\
    \lim\limits_{\delta\rightarrow0}\lambda\frac{O(\delta^{3-\frac{3-2s}{2}q})}{\delta^{3-2s}}, & \hbox{$\frac{3}{3-2s}<q\leq\frac{4s}{3-2s}$,} \\
     \lim\limits_{\delta\rightarrow0}\lambda\frac{O(\delta^{3-\frac{3-2s}{2}q}|\log\delta|)}{\delta^{3-2s}}, & \hbox{$q=\frac{3}{3-2s}$,} \\
    \lim\limits_{\delta\rightarrow0}\lambda\frac{O(\delta^{\frac{3-2s}{2}q})}{\delta^{3-2s}}, & \hbox{$\frac{4s+2t}{s+t}<q<\frac{3}{3-2s}$.}
  \end{array}
\right.
\end{align*}
We can choosing $\lambda$ large enough such that the above three limit equal to $+\infty$, for instance, $\lambda=\delta^{-2s}$.

$\bullet$ If $s=\frac{3}{4}$, it follows from \eqref{equ3-8} that
\begin{align*}
\mathcal{I}_{\mu}(u_{\delta}^{\tau})&\leq\frac{s}{3}\mathcal{S}_s^{\frac{3}{2s}}+C\delta^{\frac{3}{2}}+C\delta^{\frac{3}{2}}|\log\delta|+C\|u_{\delta}\|_{L^{\frac{12}{3+2t}}}^4-C\lambda\int_{\mathbb{R}^3}|u_{\delta}|^{q}\, {\rm d}x\\
&\leq\frac{s}{3}\mathcal{S}_s^{\frac{3}{2s}}+C\delta^{\frac{3}{2}}|\log\delta|+C\|u_{\delta}\|_{L^{\frac{12}{3+2t}}}^4-C\lambda\int_{\mathbb{R}^3}|u_{\delta}|^{q}\, {\rm d}x.
\end{align*}
Since $\frac{12}{3+2t}>2=\frac{3}{3-2s}$, we get that
\begin{align*}
\lim_{\delta\rightarrow0^{+}}\frac{\Big(\int_{\mathbb{R}^3}|u_{\delta}|^{\frac{12}{3+2t}}\,{\rm d}x\Big)^{\frac{3+2t}{3}}}{\delta^{2s}|\log\delta|}\leq
\lim\limits_{\delta\rightarrow0^{+}}\frac{O(\delta^{2t+4s-3})}{\delta^{2s}|\log\delta|}=0,\quad\frac{12}{3+2t}>\frac{3}{3-2s}=2
\end{align*}
and owing to $\frac{3}{3-2s}=2<\frac{4s+2t}{s+t}<q$, then for any $\lambda>0$, we obtain that
\begin{align*}
\lim_{\delta\rightarrow0^{+}}\lambda\frac{\int_{\mathbb{R}^3}|u_{\delta}|^q\,{\rm d}x}{\delta^{2s}|\log\delta|}=
\lim\limits_{\delta\rightarrow0^{+}}\lambda\frac{O(\delta^{3-\frac{3-2s}{2}q})}{\delta^{2s}|\log\delta|}=+\infty,\quad \frac{4s+2t}{s+t}<q<\frac{6}{3-2s}.
\end{align*}

$\bullet$ In the case $\frac{1}{2}<s<\frac{3}{4}$, by means of \eqref{equ3-8}, we get
\begin{align*}
\mathcal{I}_{\mu}(u_{\delta}^{\tau})&\leq\frac{s}{3}\mathcal{S}_s^{\frac{3}{2s}}+C\delta^{3-2s}+C\delta^{2s}+C\|u_{\delta}\|_{L^{\frac{12}{3+2t}}}^4-C\lambda\int_{\mathbb{R}^3}|u_{\delta}|^{q}\, {\rm d}x\\
&\leq\frac{s}{3}\mathcal{S}_s^{\frac{3}{2s}}+C\delta^{2s}+C\|u_{\delta}\|_{L^{\frac{12}{3+2t}}}^4-C\lambda\int_{\mathbb{R}^3}|u_{\delta}|^{q}\, {\rm d}x.
\end{align*}

Observing that $\frac{3}{3-2s}\in(\frac{3}{2},2)$, thus $\frac{12}{3+2t}>\frac{3}{3-2s}$ and $\frac{3}{3-2s}<\frac{4s+2t}{s+t}<q<\frac{6}{3-2s}$. Hence
\begin{align*}
\lim_{\delta\rightarrow0^{+}}\frac{\Big(\int_{\mathbb{R}^3}|u_{\delta}|^{\frac{12}{3+2t}}\,{\rm d}x\Big)^{\frac{3+2t}{3}}}{\delta^{2s}}\leq
\lim\limits_{\delta\rightarrow0^{+}}\frac{O(\delta^{2t+4s-3})}{\delta^{2s}}=0,\quad \frac{12}{3+2t}>\frac{3}{3-2s}                                                                                                                             \end{align*}
and for any $\lambda>0$, we have
\begin{align*}
\lim_{\delta\rightarrow0^{+}}\lambda\frac{\int_{\mathbb{R}^3}|u_{\delta}|^q\,{\rm d}x}{\delta^{2s}}=
\lim\limits_{\delta\rightarrow0^{+}}\lambda\frac{O(\delta^{3-\frac{3-2s}{2}q})}{\delta^{2s}}=+\infty,\quad \frac{4s+2t}{s+t}<q<\frac{6}{3-2s}.
\end{align*}
From the above arguments, we conclude the proof.
\end{proof}

From the estimate of mountain pass level, using the Vanishing Lemma, it is not difficult to deduce that the bounded sequence $\{u_n\}\subset H^s(\mathbb{R}^3)$ given in \eqref{equ3-5} is non-vanishing. That is,
\begin{lemma}\label{lem3-5}
There exists a sequence $\{x_n\}\subset\mathbb{R}^3$ and $R>0$, $\beta>0$ such that $\int_{B_R(x_n)}|u_n|^2\,{\rm d}x\geq\beta$.
\end{lemma}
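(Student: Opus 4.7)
The natural strategy is to argue by contradiction via Lions' vanishing lemma and to pit the resulting estimate against the strict level bound in Lemma \ref{lem3-4}. Suppose on the contrary that, for every $R>0$,
\[
\lim_{n\to\infty}\sup_{y\in\mathbb{R}^3}\int_{B_R(y)}|u_n|^2\,{\rm d}x=0.
\]
Since $\{u_n\}$ is bounded in $H^s(\mathbb{R}^3)$ by Lemma \ref{lem3-4}, the fractional Lions vanishing lemma yields $u_n\to 0$ strongly in $L^p(\mathbb{R}^3)$ for every $p\in(2,2_s^{\ast})$.

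Next I would show that every subcritical ingredient of the functional vanishes in the limit. From $(f_0)$ and the consequence $\lim_{\tau\to\infty}f(\tau)/\tau^{2_s^{\ast}-1}=0$ of $(g_1)$ with $\kappa=1$, for any $\varepsilon>0$ there exist $C_{\varepsilon}>0$ and some $p_0\in(2,2_s^{\ast})$ with $|f(t)|\leq\varepsilon(|t|+|t|^{2_s^{\ast}-1})+C_{\varepsilon}|t|^{p_0-1}$. Boundedness of $\{u_n\}$ in $H^s$ controls the $L^2$ and $L^{2_s^{\ast}}$ contributions while $\|u_n\|_{p_0}\to 0$, so letting $\varepsilon\to 0^+$ gives $\int f(u_n)u_n\,{\rm d}x\to 0$ and $\int F(u_n)\,{\rm d}x\to 0$. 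For the nonlocal Coulomb piece, Lemma \ref{lem2-1}(iii) gives $\int\phi_{u_n}^tu_n^2\,{\rm d}x\leq C\|u_n\|_{12/(3+2t)}^4$; since the standing hypothesis $2s+2t>3$ implies $4s+2t>3$, the exponent $12/(3+2t)$ lies in $(2,2_s^{\ast})$, hence $\int\phi_{u_n}^tu_n^2\,{\rm d}x\to 0$.

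Passing to a further subsequence, define
\[
A=\lim_{n\to\infty}\int_{\mathbb{R}^3}|D_su_n|^2\,{\rm d}x,\qquad B=\mu\lim_{n\to\infty}\int_{\mathbb{R}^3}|u_n|^2\,{\rm d}x,\qquad C=\lim_{n\to\infty}\int_{\mathbb{R}^3}(u_n^{+})^{2_s^{\ast}}\,{\rm d}x.
\]
From $\langle\mathcal{I}'_{\mu}(u_n),u_n\rangle\to 0$ together with the cancellations above, $A+B=C$; from $\mathcal{I}_{\mu}(u_n)\to c_{\mu}$ one obtains $\tfrac{1}{2}(A+B)-\tfrac{1}{2_s^{\ast}}C=c_{\mu}$, whence $c_{\mu}=\tfrac{s}{3}C$. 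Because $c_{\mu}>0$, necessarily $C>0$. The fractional Sobolev inequality $\mathcal{S}_s\|u\|_{2_s^{\ast}}^2\leq\int|D_su|^2\,{\rm d}x$ applied to $u_n$, together with $\int(u_n^{+})^{2_s^{\ast}}\leq\int|u_n|^{2_s^{\ast}}$, yields in the limit $C^{2/2_s^{\ast}}\leq\mathcal{S}_s^{-1}A\leq\mathcal{S}_s^{-1}C$ (the last inequality using $A\leq C$ since $B\geq 0$). Solving, $C\geq\mathcal{S}_s^{3/(2s)}$, and therefore $c_{\mu}\geq\tfrac{s}{3}\mathcal{S}_s^{3/(2s)}$, contradicting Lemma \ref{lem3-4}.

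The main obstacle I anticipate is verifying that $\int\phi_{u_n}^tu_n^2$ is truly controlled by a subcritical Lebesgue norm to which Lions applies; this reduces to the algebraic check $2<12/(3+2t)<2_s^{\ast}$, equivalent to $t<3/2$ (trivial, as $t\in(0,1)$) and $4s+2t>3$ (supplied by the standing hypothesis $2s+2t>3$). All other pieces fit the standard Lions-plus-Sobolev scheme.
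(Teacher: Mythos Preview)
Your argument is correct and is exactly the approach the paper has in mind: the paper merely states that the conclusion follows ``from the estimate of mountain pass level, using the Vanishing Lemma,'' and you have supplied precisely those details---Lions' vanishing lemma to kill the subcritical terms (including the Coulomb term via $\|u_n\|_{12/(3+2t)}\to 0$), then the balance $A+B=C$ from $\langle\mathcal{I}_{\mu}'(u_n),u_n\rangle\to 0$ combined with $\mathcal{I}_{\mu}(u_n)\to c_{\mu}$ and the Sobolev inequality to force $c_{\mu}\geq\frac{s}{3}\mathcal{S}_s^{3/(2s)}$, contradicting Lemma~\ref{lem3-4}. Your verification that $12/(3+2t)\in(2,2_s^{\ast})$ under the standing hypothesis $2s+2t>3$ (hence $4s+2t>3$) is the only point requiring care, and you handle it correctly.
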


Combining with Lemma \ref{lem3-3} and Lemma \ref{lem3-5}, we can show the existence of positive ground state solution for the limiting problem \eqref{equ3-1}.
\begin{proposition}\label{pro3-3}
Problem \eqref{equ3-1} possesses a positive ground state solution $u\in H^s(\mathbb{R}^3)$.
\end{proposition}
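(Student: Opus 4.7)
The plan is to extract a nontrivial weak limit from the Palais--Smale type sequence of Lemma \ref{lem3-3} via translation, and then rule out any loss of mass through a Brezis--Lieb splitting combined with the strict bound in Lemma \ref{lem3-4}.

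First, I would take $\{u_n\}\subset H^s(\mathbb{R}^3)$ from Lemma \ref{lem3-3}, which is bounded by the first part of Lemma \ref{lem3-4}. Setting $\tilde u_n(x):=u_n(x+x_n)$ with $\{x_n\}$ from Lemma \ref{lem3-5}, translation invariance of $\mathcal{I}_\mu$ preserves \eqref{equ3-5} and boundedness. Up to a subsequence, $\tilde u_n\rightharpoonup u$ in $H^s(\mathbb{R}^3)$, a.e.\ pointwise, and strongly in $L^p_{\rm loc}$ for $p\in[1,2_s^\ast)$. Lemma \ref{lem3-5} forces $\int_{B_R(0)}u^2\,{\rm d}x\geq\beta>0$, so $u\not\equiv 0$. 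Testing $\mathcal{I}'_\mu(\tilde u_n)\to 0$ against $v\in C_0^\infty(\mathbb{R}^3)$, the Hartree term and the subcritical piece pass to the limit thanks to Lemma \ref{lem2-1}(iv), while the critical term converges via a.e.\ convergence and the uniform $L^{2_s^\ast}$ bound, yielding $\mathcal{I}'_\mu(u)=0$. Testing with $u^-$ (allowed since $f\equiv 0$ on $(-\infty,0]$ and only $(u^+)^{2_s^\ast-1}$ appears) shows $u\geq 0$, and the strong maximum principle for $(-\Delta)^s$ upgrades to $u>0$.

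The decisive step is to identify the energy level. Since $u$ is a nontrivial critical point, the Pohozaev identity $\mathcal{P}_\mu(u)=0$ holds, hence $\mathcal{G}_\mu(u)=0$, so $u\in\mathcal{M}_\mu$ and $\mathcal{I}_\mu(u)\geq b_\mu=c_\mu$ by Lemma \ref{lem3-2}. For the opposite inequality, I would set $w_n:=\tilde u_n-u\rightharpoonup 0$ and carry out the Brezis--Lieb splitting in each piece: the Gagliardo and critical norms split in the usual way; the subcritical nonlinearity $\int F(\tilde u_n)\,{\rm d}x\to\int F(u)\,{\rm d}x$ by boundedness, $(f_0)$--$(f_1)$ and a.e.\ convergence; and since $\tilde u_n^2=u^2+w_n^2+2uw_n$ with $uw_n\rightharpoonup 0$ in $L^{\frac{6}{3+2t}}(\mathbb{R}^3)$, one obtains $\phi_{\tilde u_n}^t=\phi_u^t+\phi_{w_n}^t+o(1)$ in $\mathcal{D}^{t,2}(\mathbb{R}^3)$, whence $\int\phi_{\tilde u_n}^t\tilde u_n^2\,{\rm d}x=\int\phi_u^tu^2\,{\rm d}x+\int\phi_{w_n}^tw_n^2\,{\rm d}x+o(1)$. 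This produces
\begin{equation*}
c_\mu=\mathcal{I}_\mu(u)+\lim_{n\to\infty}\mathcal{I}_\mu^\infty(w_n),
\end{equation*}
where $\mathcal{I}_\mu^\infty$ retains only the quadratic, Hartree and critical terms, together with the splitting of $\langle\mathcal{I}'_\mu(\tilde u_n),\tilde u_n\rangle\to 0$, namely
\begin{equation*}
\|D_sw_n\|_2^2+\mu\|w_n\|_2^2+\int_{\mathbb{R}^3}\phi_{w_n}^tw_n^2\,{\rm d}x=\int_{\mathbb{R}^3}(w_n^+)^{2_s^\ast}\,{\rm d}x+o(1).
\end{equation*}

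The main obstacle, and the reason Lemma \ref{lem3-4} was computed, is to exclude $w_n\not\to 0$. Assuming the contrary, the Sobolev inequality $\|w_n\|_{2_s^\ast}^{2_s^\ast}\leq\mathcal{S}_s^{-2_s^\ast/2}\|D_sw_n\|_2^{2_s^\ast}$ inserted in the second identity forces $\liminf_n\|D_sw_n\|_2^2\geq\mathcal{S}_s^{3/(2s)}$; since $\tfrac{1}{2}-\tfrac{1}{2_s^\ast}=\tfrac{s}{3}$ and the Hartree and $L^2$ pieces are nonnegative, substituting back gives $\lim_n\mathcal{I}_\mu^\infty(w_n)\geq\tfrac{s}{3}\mathcal{S}_s^{3/(2s)}$. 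Combined with $\mathcal{I}_\mu(u)\geq c_\mu$, this would yield $c_\mu\geq c_\mu+\tfrac{s}{3}\mathcal{S}_s^{3/(2s)}$, contradicting Lemma \ref{lem3-4}. Hence $w_n\to 0$ strongly, $\mathcal{I}_\mu(u)=c_\mu=b_\mu$, and $u>0$ is the desired positive ground state of \eqref{equ3-1}.
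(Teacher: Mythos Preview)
Your overall route---translate, extract a weak limit, then run a Brezis--Lieb splitting and eliminate the residual $w_n=\tilde u_n-u$ via the Sobolev threshold---is a legitimate alternative to the paper's argument, but there is a genuine gap. The assertion ``$\int_{\mathbb{R}^3}F(\tilde u_n)\,\mathrm{d}x\to\int_{\mathbb{R}^3}F(u)\,\mathrm{d}x$ by boundedness, $(f_0)$--$(f_1)$ and a.e.\ convergence'' is not justified: from weak convergence in $H^s(\mathbb{R}^3)$ one only gets the Brezis--Lieb identity $\int F(\tilde u_n)=\int F(u)+\int F(w_n)+o(1)$, and in general $\int F(w_n)\not\to 0$ (consider $w_n=w(\cdot-ne_1)$). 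The same omission appears in your residual constraint, which should read
\[
\|D_sw_n\|_2^2+\mu\|w_n\|_2^2+\int_{\mathbb{R}^3}\phi_{w_n}^tw_n^2\,\mathrm{d}x=\int_{\mathbb{R}^3}f(w_n)w_n\,\mathrm{d}x+\int_{\mathbb{R}^3}(w_n^+)^{2_s^\ast}\,\mathrm{d}x+o(1).
\]
With these subcritical terms present, the lower bound $\mathcal{I}_\mu^\infty(w_n)\ge\tfrac{s}{3}\mathcal{S}_s^{3/(2s)}$ no longer follows from the Sobolev inequality alone.

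The paper sidesteps the splitting entirely. Lemma~\ref{lem3-3} gives the extra information $\mathcal{G}_\mu(\widehat u_n)\to 0$, and since the weak limit is a critical point, $\mathcal{G}_\mu(\widehat u)=0$. One then writes the energy as
\[
\mathcal{I}_\mu(v)-\frac{1}{4s+2t-3}\mathcal{G}_\mu(v),
\]
which, thanks to \eqref{equ3-2} and $q>\tfrac{4s+2t}{s+t}$, is a sum of \emph{nonnegative} integrands. Fatou's lemma yields $c_\mu=b_\mu\le\mathcal{I}_\mu(\widehat u)\le\liminf_n\bigl[\mathcal{I}_\mu(\widehat u_n)-\tfrac{1}{4s+2t-3}\mathcal{G}_\mu(\widehat u_n)\bigr]=c_\mu$, and the resulting equalities force $\|\widehat u_n\|_2\to\|\widehat u\|_2$ and $\|\widehat u_n^+\|_{2_s^\ast}\to\|\widehat u^+\|_{2_s^\ast}$, hence strong $H^s$-convergence. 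Your approach can be repaired by carrying $\int F(w_n)$ and $\int f(w_n)w_n$ through and then using the same $\mathcal{G}_\mu$-information (which you do have from Lemma~\ref{lem3-3}) on the residual, but at that point you are essentially reproducing the paper's Fatou mechanism inside a heavier framework. As a minor remark, your final inequality $c_\mu\ge c_\mu+\tfrac{s}{3}\mathcal{S}_s^{3/(2s)}$ is already absurd and does not actually invoke Lemma~\ref{lem3-4}.
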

\begin{proof}
Let $\{u_n\}$ be the sequence given in \eqref{equ3-5}. Set $\widehat{u}_n(x)=u_n(x+x_n)$, where $\{x_n\}$ is the sequence obtained in Lemma \ref{lem3-5}. Thus $\{\widehat{u}_n\}$ is still bounded in $H^s(\mathbb{R}^3)$ and so up to a subsequence, still denoted by $\{\widehat{u}_n\}$, we may assume that there exists $\widehat{u}\in H^s(\mathbb{R}^3)$ such that
\begin{equation*}
\left\{
  \begin{array}{ll}
    \widehat{u}_n\rightharpoonup \widehat{u} & \hbox{in $H^s(\mathbb{R}^3)$,} \\
    \widehat{u}_n\rightarrow \widehat{u} & \hbox{in $L_{loc}^p(\mathbb{R}^3)$ for all $1\leq p<2_s^{\ast}$,}\\
    \widehat{u}_n\rightarrow \widehat{u} & \hbox{a.e. $\mathbb{R}^3$.}
  \end{array}
\right.
\end{equation*}
It follows from Lemma \ref{lem3-5} that $\widehat{u}$ is nontrivial. Moreover, using $(iv)$ of Lemma \ref{lem2-1}, it is not difficult to verify that $\widehat{u}$ is a nontrivial solution of problem \eqref{equ3-1}, and since $f\in C^1(\mathbb{R}^3)$, standard arguments lead to $\mathcal{G}_{\mu}(\widehat{u})=0$. By Fatou's Lemma and \eqref{equ3-5}, we have
\begin{align*}
c_{\mu}&=b_{\mu}\leq\mathcal{I}_{\mu}(\widehat{u})=\mathcal{I}_{\mu}(\widehat{u})-\frac{1}{4s+2t-3}\mathcal{G}_{\mu}(\widehat{u})=\frac{s}{4s+2t-3}\int_{\mathbb{R}^3}\mu|\widehat{u}|^2\,{\rm d}x\\
&+\frac{s+t}{4s+2t-3}\int_{\mathbb{R}^3}\Big(f(\widehat{u})\widehat{u}-\frac{4s+2t}{s+t}F(\widehat{u})\Big)\,{\rm d}x+\frac{(2_s^{\ast}-4)s+(2_s^{\ast}-2)t}{2_s^{\ast}(4s+2t-3)}\int_{\mathbb{R}^3}(\widehat{u}^{+})^{2_s^{\ast}}\,{\rm d}x\\
&\leq\liminf_{n\rightarrow\infty}\Big[\frac{s+t}{4s+2t-3}\int_{\mathbb{R}^3}\Big(f(\widehat{u}_n)\widehat{u}_n-\frac{4s+2t}{s+t}F(\widehat{u}_n)\Big)\,{\rm d}x\\
&+\frac{(2_s^{\ast}-4)s+(2_s^{\ast}-2)t}{2_s^{\ast}(4s+2t-3)}\int_{\mathbb{R}^3}(\widehat{u}_n^{+})^{2_s^{\ast}}\,{\rm d}x+\frac{s}{4s+2t-3}\int_{\mathbb{R}^3}\mu|\widehat{u}_n|^2\,{\rm d}x\Big]\\
&=\liminf_{n\rightarrow\infty}\Big[\mathcal{I}_{\mu}(\widehat{u}_n)-\frac{1}{4s+2t-3}\mathcal{G}_{\mu}(\widehat{u}_n)\Big]\\
&=\liminf_{n\rightarrow\infty}\Big[\mathcal{I}_{\mu}(u_n)-\frac{1}{4s+2t-3}\mathcal{G}_{\mu}(u_n)\Big]=c_{\mu}
\end{align*}
which implies that $\widehat{u}_n\rightarrow \widehat{u}$ in $H^s(\mathbb{R}^3)$. Indeed, from the above inequality, we get that
\begin{equation*}
\int_{\mathbb{R}^3}\widehat{u}_n^2\,{\rm d}x\rightarrow\int_{\mathbb{R}^3}\widehat{u}^2\,{\rm d}x,\quad \int_{\mathbb{R}^3}(\widehat{u}_n^{+})^{2_s^{\ast}}\,{\rm d}x\rightarrow\int_{\mathbb{R}^3}(\widehat{u}^{+})^{2_s^{\ast}}\,{\rm d}x.
\end{equation*}
By virtue of the Brezis-Lieb Lemma and interpolation argument, we conclude that
\begin{equation*}
\widehat{u}_n\rightarrow\widehat{u}\quad \text{in}\,\, L^r(\mathbb{R}^3)\,\, \text{for all}\,\, 2\leq r\leq 2_s^{\ast}.
\end{equation*}
Hence, from the standard arguments, it follows that $\widehat{u}_n\rightarrow \widehat{u}$ in $H^s(\mathbb{R}^3)$. Therefore, by Lemma \ref{lem3-2}, we conclude that $\mathcal{I}_{\mu}(\widehat{u})=c_{\mu}$ and $\mathcal{I}'_{\mu}(\widehat{u})=0$.

Next, we show that the ground state solution of \eqref{equ3-1} is positive. Indeed, by standard argument to the proof Proposition 4.4 in \cite{Teng2}, using Lemma \ref{lem2-3} two times, we have that $\widehat{u}\in C^{2,\alpha}(\mathbb{R}^3)$ for some $\alpha\in(0,1)$ for $s>\frac{1}{2}$. Using $-\widehat{u}^{-}$ as a testing function, it is easy to see that $\widehat{u}\geq0$. Since $\widehat{u}\in C^{2,\alpha}(\mathbb{R}^3)$, by Lemma 3.2 in \cite{NPV}, we have that
\begin{equation*}
(-\Delta)^s\widehat{u}(x)=-\frac{1}{2}C_s\int_{\mathbb{R}^3}\frac{\widehat{u}(x+y)+\widehat{u}(x-y)-2\widehat{u}(x)}{|x-y|^{3+2s}}\,{\rm d}x\,{\rm d}y,\quad \forall\,\, x\in\mathbb{R}^3.
\end{equation*}
Assume that there exists $x_0\in\mathbb{R}^3$ such that $\widehat{u}(x_0)=0$, then from $\widehat{u}\geq0$ and $\widehat{u}\not\equiv0$, we get
\begin{equation*}
(-\Delta)^s\widehat{u}(x_0)=-\frac{1}{2}C_s\int_{\mathbb{R}^3}\frac{\widehat{u}(x_0+y)+\widehat{u}(x_0-y)}{|x_0-y|^{3+2s}}\,{\rm d}x\,{\rm d}y<0.
\end{equation*}
However, observe that $(-\Delta)^s\widehat{u}(x_0)=-\mu \widehat{u}(x_0)-(\phi_{\widehat{u}}^t\widehat{u})(x_0)+f(\widehat{u}(x_0))+\widehat{u}(x_0)^{2_s^{\ast}-1}=0$, a contradiction. Hence, $\widehat{u}(x)>0$, for every $x\in\mathbb{R}^3$. The proof is completed.

\end{proof}

Let $\mathcal{L}_{\mu}$ be the set of ground state solutions $W$ of \eqref{equ3-1} satisfying $W(0)=\max\limits_{\mathbb{R}^3}W(x)$. By similar proof of Proposition 3.8 in \cite{Teng4}, we can establish the following compactness of $\mathcal{L}_{\mu}$.

\begin{proposition}\label{pro3-4}
$(i)$ For each $\mu>0$, $\mathcal{L}_{\mu}$ is compact in $H^s(\mathbb{R}^3)$.\\
$(ii)$ $0<W(x)\leq\frac{C}{1+|x|^{3+2s}}$, for any $x\in\mathbb{R}^3$.
\end{proposition}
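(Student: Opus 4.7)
The plan is to treat part $(i)$ (compactness) first and then part $(ii)$ (pointwise decay), since the decay estimate will be used to upgrade local to global convergence at the end of $(i)$.

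For part $(i)$, I take an arbitrary sequence $\{W_n\}\subset\mathcal{L}_\mu$. Each $W_n$ satisfies $\mathcal{I}_\mu(W_n)=c_\mu$, $\mathcal{I}'_\mu(W_n)=0$, and $W_n(0)=\max_{\mathbb{R}^3}W_n$. By Lemma \ref{lem3-4} the sequence is bounded in $H^s(\mathbb{R}^3)$; by Proposition \ref{pro2-1}\,(more precisely, Proposition \ref{pro2-2}) we get a uniform bound $\|W_n\|_{L^\infty}\le C$; and Lemma \ref{lem2-3} applied to the equation then gives a uniform Hölder (or $C^{1,\alpha}$) modulus of continuity, so $\{W_n\}$ is equicontinuous on $\mathbb{R}^3$. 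I next show a uniform lower bound $W_n(0)\ge c_0>0$: if $W_n(0)\to 0$ then, by maximality, $\|W_n\|_{L^\infty}\to 0$; using $(f_0)$ and the interpolation $\|W_n\|_p^p\le\|W_n\|_\infty^{p-2}\|W_n\|_2^2$, together with Lemma \ref{lem2-1}\,(iii), all the nonlinear terms in $\langle\mathcal{I}'_\mu(W_n),W_n\rangle=0$ become $o(\|W_n\|^2)$, forcing $\|W_n\|_{H^s}\to 0$ and hence $\mathcal{I}_\mu(W_n)\to 0$, contradicting $c_\mu>0$.

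Passing to a subsequence, $W_n\rightharpoonup W$ in $H^s(\mathbb{R}^3)$. Equicontinuity plus $W_n(0)\ge c_0$ give $W_n\ge c_0/2$ on some fixed $B_\delta(0)$, so $W\not\equiv 0$. Lemma \ref{lem2-1}\,(iv) shows $W$ solves the limiting equation; by the regularity already established, $W$ satisfies the Pohozaev identity, so $\mathcal{G}_\mu(W)=0$, hence $W\in\mathcal{M}_\mu$ and $\mathcal{I}_\mu(W)\ge b_\mu=c_\mu$. Applying Fatou's lemma to the nonnegative expression
\begin{equation*}
\mathcal{I}_\mu(u)-\tfrac{1}{4s+2t-3}\mathcal{G}_\mu(u)
\end{equation*}
exactly as in the proof of Proposition \ref{pro3-3} forces $\mathcal{I}_\mu(W)=c_\mu$ and the equalities in each Fatou term. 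Brezis–Lieb plus interpolation then upgrade this to strong convergence $W_n\to W$ in $H^s$. Finally, I use the decay from $(ii)$: since $W_n(x),W(x)\le C/(1+|x|^{3+2s})$ uniformly, the convergence is in fact uniform on $\mathbb{R}^3$, so $W_n(0)=\max W_n$ passes to $W(0)=\max W$, placing $W$ in $\mathcal{L}_\mu$ and proving compactness.

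For part $(ii)$, the strategy is to rewrite the equation as
\begin{equation*}
(-\Delta)^sW+\mu W=f(W)+W^{2_s^\ast-1}-\phi_W^t W=:h(x),
\end{equation*}
and exploit the fact that the fundamental solution $\mathcal{K}_{s,\mu}$ of $(-\Delta)^s+\mu$ behaves like $|x|^{-(3+2s)}$ at infinity, so that $W=\mathcal{K}_{s,\mu}*h$. A De Giorgi–Moser iteration combined with Lemma \ref{lem2-3} first produces $W(x)\to 0$ as $|x|\to\infty$; from $2s+2t>3$ and Lemma \ref{lem2-1}\,(iii), $\phi_W^t$ is in $L^\infty$ and decays as well. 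Plugging this crude decay of $h$ into the convolution formula yields a first polynomial decay of $W$, which in turn forces faster decay of $f(W)+W^{2_s^\ast-1}-\phi_W^tW$, and one bootstraps until reaching the optimal rate $W(x)\le C/(1+|x|^{3+2s})$; this is exactly the pattern used for Proposition 3.8 of \cite{Teng4}. Positivity $W>0$ comes from the strong maximum principle for $(-\Delta)^s$ already invoked in Proposition \ref{pro3-3}.

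The main obstacle is the nonlocal Hartree term $\phi_W^tW$: its decay is not given \emph{a priori} but is only as fast as $W$ allows. The resolution is the simultaneous bootstrap above, which works precisely because the condition $2s+2t>3$ in the hypothesis of Theorem \ref{thm1-1} ensures $\phi_W^t$ sees the full decay of $W^2$ through the Riesz potential and thereby decays strictly faster than $W$ at each iteration, so the scheme converges to the sharp rate $|x|^{-(3+2s)}$.
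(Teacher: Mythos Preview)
The paper does not provide a proof of this proposition; it simply refers to Proposition~3.8 of \cite{Teng4}. Your sketch follows the standard route used there---boundedness, non-vanishing, Fatou on the nonnegative quantity $\mathcal{I}_\mu-\tfrac{1}{4s+2t-3}\mathcal{G}_\mu$ to force strong $H^s$-convergence, and for~(ii) the comparison with the kernel of $(-\Delta)^s+\mu$ together with a bootstrap on the right-hand side---so the overall strategy is correct and matches the intended argument.

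There is, however, one ordering issue you should repair. You invoke Proposition~\ref{pro2-2} to obtain the uniform $L^\infty$ bound (and hence equicontinuity) \emph{before} you have strong convergence, but Proposition~\ref{pro2-2} as stated explicitly requires $u_n\to u$ strongly in $H^s$ or in $L^{2_s^\ast}$; for critical problems the Moser iteration does not give a uniform $L^\infty$ estimate from a mere $H^s$ bound, because $L^{2_s^\ast}$-mass could concentrate. The clean fix is to rearrange the argument: first apply Lemma~\ref{lem3-5} to the sequence $\{W_n\}$ (which satisfies \eqref{equ3-5} exactly) to obtain points $x_n$ with $\int_{B_R(x_n)}W_n^2\ge\beta$; run the Fatou argument from Proposition~\ref{pro3-3} on the translated sequence $W_n(\cdot+x_n)$ to get strong $H^s$-convergence to a ground state $\widehat W$; \emph{now} Proposition~\ref{pro2-2} applies and, combined with Lemma~\ref{lem2-3} and the decay from~(ii), yields uniform convergence on $\mathbb{R}^3$. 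Since $W_n$ attains its maximum at~$0$ while $\widehat W(\cdot-x_n)$ attains its at a point converging to the (unique up to translation) maximum of $\widehat W$, the sequence $\{x_n\}$ must be bounded, and after a further subsequence the untranslated $W_n$ converge in $H^s$ to an element of $\mathcal{L}_\mu$. Your argument for $W_n(0)\ge c_0$ is fine and does not need the $L^\infty$ bound, so it survives unchanged; only the step ``equicontinuity $\Rightarrow W\not\equiv 0$'' needs to be replaced by the translation-and-return argument above.
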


\section{The penalization scheme}

For the bounded domain $\Lambda$ given in $(V_1)$, $k>2$, $a>0$ such that $f(a)+a^{2_s^{\ast}-1}=\frac{V_0}{k}a$ where $\alpha_0$ is mentioned in $(V_0)$, we consider a new problem
\begin{equation}\label{main-4-1}
(-\Delta)^su+V(\varepsilon z)u+\phi_u^t u=g(\varepsilon z,u) \quad \text{in}\,\,\mathbb{R}^3
\end{equation}
where $g(\varepsilon z,\tau)=\chi_{\Lambda_{\varepsilon}}(\varepsilon z)(f(\tau)+(\tau^{+})^{2_s^{\ast}-1})+(1-\chi_{\Lambda_{\varepsilon}}(\varepsilon z))\tilde{f}(\tau)$ with
\begin{equation*}
\tilde{f}(\tau)=\left\{
  \begin{array}{ll}
    f(\tau)+(\tau^{+})^{2_s^{\ast}-1} & \hbox{if $\tau\leq a$,} \\
    \frac{V_0}{k}\tau & \hbox{if $\tau>a$}
  \end{array}
\right.
\end{equation*}
and $\chi_{\Lambda_{\varepsilon}}(\varepsilon z)=1$ if $z\in\Lambda_{\varepsilon}$, $\chi(z)=0$ if $z\not\in\Lambda_{\varepsilon}$, where $\Lambda_{\varepsilon}=\Lambda/\varepsilon$. It is easy to see that under the assumptions $(f_1)$-$(f_3)$, $g(z,\tau)$ is a Caratheodory function and satisfies the following assumptions:\\
$(g_1)$ $g(z,\tau)=o(\tau^3)$ as $\tau\rightarrow0$ uniformly on $z\in\mathbb{R}^3$;\\
$(g_2)$ $g(z,\tau)\leq f(\tau)+\tau^{2_s^{\ast}-1}$ for all $\tau\in\mathbb{R}^{+}$ and $z\in\mathbb{R}^3$, $g(z,\tau)=0$ for all $z\in\mathbb{R}^3$ and $\tau<0$, $g(z,\tau)=f(\tau)+(\tau^{+})^{2_s^{\ast}-1}$ for $z\in\mathbb{R}^3$, $\tau\in[0,a]$;\\
$(g_3)$ $0<2\tilde{F}(\tau)\leq\tilde{f}(\tau)\tau\leq\frac{V_0}{k}\tau^2\leq\frac{V(x)}{k}\tau^2$ for all $s\geq0$ with the number $k>2$, where $\tilde{F}(\tau)$ is a prime function of $\tilde{f}$;\\
$(g_4)$ $0<q G(z,\tau)\leq g(z,\tau)\tau$ for all $z\in\Lambda$, $\tau>0$ or $z\in\mathbb{R}^3\backslash\Lambda$, $\tau\leq a$, where $G(z,\tau)$ is a prime function of $g(z,\tau)$;\\
$(g_5)$ $\frac{g(z,s\tau)}{s}$ is nondecreasing in $\tau\in\mathbb{R}^{+}$ uniformly for $z\in\mathbb{R}^3$, $\frac{g(z,s\tau)}{\tau^q}$ is nondecreasing in $\tau\in\mathbb{R}^{+}$ and $z\in\Lambda$, $\frac{g(z,s\tau)}{\tau^q}$ is nondecreasing in $\tau\in(0,a)$ and $z\in\mathbb{R}^3\backslash\Lambda$.

Obviously, if $u_{\varepsilon}$ is a solution of \eqref{main-4-1} satisfying $u_{\varepsilon}(z)\leq a$ for $z\in\mathbb{R}^3$, then $u_{\varepsilon}$ is indeed a solution of the original problem \eqref{R-1}.

For $u\in H_{\varepsilon}$, let
\begin{equation*}
P_{\varepsilon}(u)=\frac{1}{2}\int_{\mathbb{R}^3}(|D_su|^2+V(\varepsilon z)u^2)\,{\rm d}z+\frac{1}{4}\int_{\mathbb{R}^3}\phi_u^tu^2\,{\rm d}z-\int_{\mathbb{R}^3}G(\varepsilon z,u)\,{\rm d}z
\end{equation*}
and
\begin{equation*}
Q_{\varepsilon}(u)=\Big(\int_{\mathbb{R}^3\backslash\Lambda_{\varepsilon}}u^2\,{\rm d}x-\varepsilon\Big)_{+}^2.
\end{equation*}
Let us define the functional $\mathcal{J}_{\varepsilon}: H_{\varepsilon}\rightarrow\mathbb{R}$ as follows
\begin{equation*}
\mathcal{J}_{\varepsilon}(u)=P_{\varepsilon}(u)+Q_{\varepsilon}(u).
\end{equation*}
Clearly, $\mathcal{J}_{\varepsilon}\in C^1(H_{\varepsilon},\mathbb{R})$. To find solutions of \eqref{main-4-1} which concentrates in $\Lambda$ as $\varepsilon\rightarrow0$, we shall search critical points of $\mathcal{J}_{\varepsilon}$ such that $Q_{\varepsilon}$ is zero.

Set
\begin{equation*}
\delta_0=\frac{1}{10}{\rm dist}(\mathcal{M},\mathbb{R}^3\backslash\Lambda),\quad \beta\in(0,\delta_0).
\end{equation*}
Fix a cut-off function $\varphi\in C_0^{\infty}(\mathbb{R}^3)$ such that $0\leq\varphi\leq1$, $\varphi=1$ for $|z|\leq\beta$, $\varphi=0$ for $|z|\geq 2\beta$ and $|\nabla\varphi|\leq C/\beta$. Set $\varphi_{\varepsilon}(z)=\varphi(\varepsilon z)$, for any $W\in \mathcal{L}_{V_0}$ and any point $y\in\mathcal{M}^{\beta}=\{y\in\mathbb{R}^3\,\,|\,\,\inf\limits_{z\in\mathcal{M}}|y-z|\leq\beta\}$, we define
\begin{equation*}
W_{\varepsilon}^{y}(z)=\varphi_{\varepsilon}(z-\frac{y}{\varepsilon})W(z-\frac{y}{\varepsilon}).
\end{equation*}
For $A\subset H_{\varepsilon}$, we use the notation
\begin{equation*}
A^{a}=\{u\in H_{\varepsilon}\,\,\Big|\,\,\inf_{v\in A}\|u-v\|_{H_{\varepsilon}}\leq a\}.
\end{equation*}

We want to find a solution near the set
\begin{equation*}
\mathcal{N}_{\varepsilon}=\{W_{\varepsilon}^{y}(z)\,\,\Big|\,\,y\in\mathcal{M}^{\beta}, \,\, W\in \mathcal{L}_{V_0}\}
\end{equation*}
for $\varepsilon>0$ sufficiently small.

Similar arguments as the proof of Lemma 4.1, Lemma 4.2 and Lemma 4.3 in \cite{Teng4}, we can show that\\
\begin{itemize}
  \item $\mathcal{N}_{\varepsilon}$ is uniformly bounded in $H_{\varepsilon}$ and it is compact in $H_{\varepsilon}$ for any $\varepsilon>0$;
  \item \begin{equation*}
\sup_{\tau\in[0,\tau_0]}\Big|\mathcal{J}_{\varepsilon}(W_{\varepsilon,\tau})-\mathcal{I}_{V_0}(W^{\ast}_{\tau}(z))\Big|\rightarrow0\quad \text{as}\,\, \varepsilon\rightarrow0;
\end{equation*}
\item \begin{equation}\label{equ4-1}
\lim_{\varepsilon\rightarrow0}\mathcal{C}_{\varepsilon}=\lim_{\varepsilon\rightarrow0}\mathcal{D}_{\varepsilon}:=\lim_{\varepsilon\rightarrow0}\max_{\tau\in[0,1]}\mathcal{J}_{\varepsilon}(\gamma_{\varepsilon}(\tau))=c_{V_0}
\end{equation}
\end{itemize}
where \begin{equation*}
\mathcal{C}_{\varepsilon}:=\inf_{\gamma\in\mathcal{A}_{\varepsilon}}\max_{\tau\geq0}\mathcal{J}_{\varepsilon}(\gamma(\tau)),
\end{equation*}
$\mathcal{A}_{\varepsilon}=\{\gamma\in C([0,1],H_{\varepsilon})\,\,|\,\, \gamma(0)=0,\,\,\gamma(1)=U_{\varepsilon,\tau_0}\}$, $\gamma_{\varepsilon}(\tau)=W_{\varepsilon,\tau\tau_0}$ for $\tau\in[0,1]$ and $c_{V_0}=\mathcal{I}_{V_0}(W^{\ast})$ for $W^{\ast}\in\mathcal{L}_{V_0}$. Moreover, $\mathcal{J}_{\varepsilon}(U_{\varepsilon,\tau})$ possesses the mountain-pass geometry.

\begin{lemma}\label{lem4-1}
There exists a small $d_0>0$ such that for any $\{\varepsilon_i\}$, $\{u_{\varepsilon_i}\}$ satisfying $\lim\limits_{i\rightarrow\infty}\varepsilon_i\rightarrow0$, $u_{\varepsilon_i}\in \mathcal{N}_{\varepsilon_i}^{d_0}$ and
\begin{equation*}
\lim_{i\rightarrow\infty}\mathcal{J}_{\varepsilon_i}(u_{\varepsilon_i})\leq c_{V_0}\quad \text{and}\quad \lim_{i\rightarrow\infty}\mathcal{J}_{\varepsilon_i}'(u_{\varepsilon_i})=0,
\end{equation*}
there exist, up to a subsequence, $\{x_i\}\subset\mathbb{R}^3$, $x_0\in\mathcal{M}$, $W\in \mathcal{L}_{V_0}$ such that
\begin{equation*}
\lim_{i\rightarrow\infty}|\varepsilon_ix_i-x_0|=0\quad \text{and}\quad \lim_{i\rightarrow\infty}\|u_{\varepsilon_i}-\varphi_{\varepsilon}(\cdot-x_{i})W(\cdot-x_{i})\|_{E_{\varepsilon_i}}=0.
\end{equation*}
\end{lemma}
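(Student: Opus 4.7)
The plan is to run a translation-and-compactness argument, exploiting the tube structure of $\mathcal{N}_{\varepsilon_i}$. First I would use the compactness of $\mathcal{L}_{V_0}$ (Proposition 3.4(i)) and the boundedness of $\mathcal{M}^\beta$ to pick, for each $i$, a pair $(y_i,W_i)\in \mathcal{M}^\beta\times \mathcal{L}_{V_0}$ realizing (up to a factor $2$) the distance of $u_{\varepsilon_i}$ to $\mathcal{N}_{\varepsilon_i}$. Passing to a subsequence, $y_i\to y^{*}\in \overline{\mathcal{M}^\beta}\subset\Lambda$ and $W_i\to W^{*}$ in $H^s(\mathbb{R}^3)$. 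Setting $x_i=y_i/\varepsilon_i$ and $\tilde u_i(z)=u_{\varepsilon_i}(z+x_i)$, the sequence $\{\tilde u_i\}$ is bounded in $H^s(\mathbb{R}^3)$; up to a subsequence $\tilde u_i\rightharpoonup U$ weakly and pointwise a.e. Because $W^{y_i}_{\varepsilon_i}(z+x_i)=\varphi_{\varepsilon_i}(z)W_i(z)\to W^{*}(z)$ in $H^s$, the bound $\|u_{\varepsilon_i}-W^{y_i}_{\varepsilon_i}\|_{H_{\varepsilon_i}}\leq 2d_0$ forces $\|U-W^{*}\|_{H^s}\leq C d_0$, so taking $d_0$ smaller than $\tfrac{1}{2C}\inf\{\|W\|_{H^s}:W\in \mathcal{L}_{V_0}\}$ (positive by compactness) yields $U\not\equiv 0$.

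Next I would pass to the limit in the relation $\mathcal{J}'_{\varepsilon_i}(u_{\varepsilon_i})\to 0$ tested against $\psi(\cdot-x_i)$ for $\psi\in C_0^\infty(\mathbb{R}^3)$. On any compact set $K$, since $y^{*}\in\Lambda$ (as $\beta<\delta_0$), one has $\varepsilon_i z+y_i\in\Lambda$ eventually for $z\in K$, so the truncation disappears and $g(\varepsilon_i(z+x_i),\tilde u_i)=f(\tilde u_i)+(\tilde u_i^+)^{2_s^\ast-1}$; together with $V(\varepsilon_i z+y_i)\to V(y^{*})$ and Lemma 2.1(iv), this shows that $U$ solves
\begin{equation*}
(-\Delta)^sU+V(y^{*})U+\phi_U^tU=f(U)+(U^+)^{2_s^\ast-1}\quad\text{in }\mathbb{R}^3,
\end{equation*}
so $U$ is a nontrivial critical point of $\mathcal{I}_{V(y^{*})}$ and $\mathcal{I}_{V(y^{*})}(U)\geq c_{V(y^{*})}$.

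To pin down $y^{*}$ and obtain the strong convergence, I would combine this with the energy bound. Writing $\mathcal{J}_{\varepsilon_i}(u_{\varepsilon_i})-\tfrac{1}{4s+2t-3}\langle \mathcal{J}'_{\varepsilon_i}(u_{\varepsilon_i}),u_{\varepsilon_i}\rangle$ as a sum of manifestly non-negative integrands (using $(g_3)$-$(g_5)$, the same trick as in the proof of Proposition 3.3), Fatou's lemma yields $\mathcal{I}_{V(y^{*})}(U)\leq c_{V_0}$. Since $V(y^{*})\geq V_0$ gives $c_{V(y^{*})}\geq c_{V_0}$, we conclude $V(y^{*})=V_0$ and $\mathcal{I}_{V_0}(U)=c_{V_0}$, hence $y^{*}\in \mathcal{M}$ and $U$ is a ground state. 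After translating $U$ so that its maximum sits at the origin (absorbing the shift into $x_i$) we obtain $U\in \mathcal{L}_{V_0}$; equality in the Fatou estimate then promotes weak convergence into norm convergence in every summand, i.e.\ $\tilde u_i\to U$ in $H^s$, which, together with $V(\varepsilon_i z+y_i)\to V_0$ and $\varphi_{\varepsilon_i}\to 1$ locally, is precisely $\|u_{\varepsilon_i}-\varphi_{\varepsilon_i}(\cdot-x_i)U(\cdot-x_i)\|_{H_{\varepsilon_i}}\to 0$, with $x_0=y^{*}\in\mathcal{M}$.

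The delicate step I expect to be the main obstacle is the strong convergence in the presence of the critical exponent: a priori the weak limit $U$ could absorb only a fraction of the mass, with the rest escaping as concentrating bubbles centered away from $x_i$. Ruling this out requires Lemma 2.2 together with the strict inequality $c_{V_0}<\tfrac{s}{3}\mathcal{S}_s^{3/(2s)}$ from Lemma 3.4 (a bubble would contribute at least $\tfrac{s}{3}\mathcal{S}_s^{3/(2s)}$ to the Pohozaev-Nehari energy, contradicting $\mathcal{J}_{\varepsilon_i}(u_{\varepsilon_i})\leq c_{V_0}+o_i(1)$), while bubbles outside $\Lambda_{\varepsilon_i}$ are forbidden by the subcritical truncation in $(g_2)$-$(g_3)$ combined with the penalization $Q_{\varepsilon_i}$, which keeps the external $L^2$-mass under control.
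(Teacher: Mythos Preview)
Your outline follows the same overall strategy as the paper (translate, take a weak limit, identify it as a ground state, upgrade to strong convergence), but there is a genuine gap at the point you flag as ``the same trick as in the proof of Proposition~3.3.''

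The combination $\mathcal{J}_{\varepsilon_i}(u_{\varepsilon_i})-\tfrac{1}{4s+2t-3}\langle \mathcal{J}'_{\varepsilon_i}(u_{\varepsilon_i}),u_{\varepsilon_i}\rangle$ is \emph{not} a sum of nonnegative integrands.  In Proposition~3.3 the nonnegative decomposition uses $\mathcal{I}_\mu-\tfrac{1}{4s+2t-3}\mathcal{G}_\mu$, where $\mathcal{G}_\mu=(s+t)\langle\mathcal{I}'_\mu(u),u\rangle-\mathcal{P}_\mu(u)$ involves the Pohozaev functional; this is what makes the gradient and Poisson terms cancel.  If you use only $\langle\mathcal{J}'_\varepsilon(u),u\rangle$, the Poisson term $\int\phi_u^tu^2$ enters with coefficient $\tfrac14-\tfrac{1}{\theta}$, which is nonnegative only for $\theta\ge 4$; but then $\tfrac{1}{\theta}g(\varepsilon z,u)u-G(\varepsilon z,u)\ge 0$ inside $\Lambda_\varepsilon$ requires $\theta\le q$, and $q$ is allowed to lie in $(\tfrac{4s+2t}{s+t},4)$.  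The Pohozaev identity, on the other hand, is unavailable for $\mathcal{J}_\varepsilon$ because the $x$--dependent potential, the truncation $g(\varepsilon z,\cdot)$, and the penalization $Q_\varepsilon$ all break the scaling.  So your Fatou step, as written, does not go through.

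The paper works around this by a much longer localization argument: first it proves (Step~1) that no $L^{2_s^\ast}$--mass lives in the annulus $B_{3\beta/\varepsilon}(x_\varepsilon/\varepsilon)\setminus B_{\beta/2\varepsilon}(x_\varepsilon/\varepsilon)$; then it splits $u_\varepsilon=\varphi_\varepsilon(\cdot-\tfrac{x_\varepsilon}{\varepsilon})u_\varepsilon+(1-\varphi_\varepsilon(\cdot-\tfrac{x_\varepsilon}{\varepsilon}))u_\varepsilon$, shows the tail goes to zero in $H_\varepsilon$, and only then analyzes the translated cut-off piece $\widehat u_\varepsilon$.  The exclusion of critical bubbles is carried out not through an energy splitting but via Proposition~2.3 (a profile-decomposition lemma tailored to $(-\Delta)^s v=|v|^{2_s^\ast-2}v$), applied twice: once in Step~1 and once to prove $\widehat u_\varepsilon\to\widehat u$ in $L^{2_s^\ast}$.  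Each time, the extracted bubble would force $\int|D_s W_0|^2$ or $\int|W_0|^{2_s^\ast}$ to exceed $\mathcal{S}_s^{3/(2s)}-Cd_0$, and this is ruled out by computing $c_{V_0}=\mathcal{I}_{V_0}(W_0)-\tfrac{1}{q(s+t)-3}\mathcal{G}_{V_0}(W_0)$ for the \emph{limiting} profile $W_0$ (where the Pohozaev identity is legitimate) and invoking $c_{V_0}<\tfrac{s}{3}\mathcal{S}_s^{3/(2s)}$.  Only after strong $L^{2_s^\ast}$ convergence is established does the paper recover $H^s$--convergence by pairing with the equation, along the lines you suggest.
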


\begin{proof}
In the proof we will drop the index $i$ and write $\varepsilon$ instead of $\varepsilon_i$ for simplicity, and we still use $\varepsilon$ after taking a subsequence. By the definition of $\mathcal{N}_{\varepsilon}^{d_0}$, there exist $\{W_{\varepsilon}\}\subset\mathcal{L}_{V_0}$ and $\{x_{\varepsilon}\}\subset\mathcal{M}^{\beta}$ such that
\begin{equation*}
\|u_{\varepsilon}-\varphi_{\varepsilon}(\cdot-\frac{x_{\varepsilon}}{\varepsilon})W_{\varepsilon}(\cdot-\frac{x_{\varepsilon}}{\varepsilon})\|_{H_{\varepsilon}}\leq\frac{3}{2}d_0.
\end{equation*}
Since $\mathcal{L}_{V_0}$ and $\mathcal{M}^{\beta}$ are compact, there exist $W_0\in\mathcal{L}_{V_0}$, $x_0\in\mathcal{M}^{\beta}$ such that $W_{\varepsilon}\rightarrow W_0$ in $H^s(\mathbb{R}^3)$ and $x_{\varepsilon}\rightarrow x_0$ as $\varepsilon\rightarrow0$. Thus, for $\varepsilon>0$ small,
\begin{equation}\label{equ4-2}
\|u_{\varepsilon}-\varphi_{\varepsilon}(\cdot-\frac{x_{\varepsilon}}{\varepsilon})W_0(\cdot-\frac{x_{\varepsilon}}{\varepsilon})\|_{H_{\varepsilon}}\leq2d_0.
\end{equation}

{\bf Step 1.} We claim that
\begin{equation}\label{equ4-3}
\lim_{\varepsilon\rightarrow0}\sup_{y\in A_{\varepsilon}}\int_{B_1(y)}|u_{\varepsilon}|^{2_s^{\ast}}\,{\rm d}z=0,
\end{equation}
where $A_{\varepsilon}=B_{3\beta/\varepsilon}(x_{\varepsilon}/\varepsilon)\backslash B_{\beta/2\varepsilon}(x_{\varepsilon}/\varepsilon)$. Suppose by contradiction that there exists $r>0$ such that
\begin{equation*}
\liminf_{\varepsilon\rightarrow0}\sup_{y\in A_{\varepsilon}}\int_{B_1(y)}|u_{\varepsilon}|^{2_s^{\ast}}\,{\rm d}z=2r>0.
\end{equation*}
Thus, there exists $y_{\varepsilon}\in A_{\varepsilon}$ such that $\int_{B_1(y_{\varepsilon})}|u_{\varepsilon}|^{2_s^{\ast}}\,{\rm d}z\geq r>0$ for $\varepsilon>0$ small. Since $y_{\varepsilon}\in A_{\varepsilon}$, there exists $y^{\ast}\in\mathcal{M}^{4\beta}\subset\Lambda$ such that $\varepsilon y_{\varepsilon}\rightarrow y^{\ast}$ as $\varepsilon\rightarrow0$. Set $v_{\varepsilon}(z)=u_{\varepsilon}(z+y_{\varepsilon})$, then for $\varepsilon>0$ small,
\begin{equation}\label{equ4-4}
\int_{B_1(0)}|v_{\varepsilon}|^{2_s^{\ast}}\,{\rm d}z\geq r>0.
\end{equation}
Thus, up to a subsequence, we may assume that there exists $v\in H^s(\mathbb{R}^3)$ such that $v_{\varepsilon}\rightharpoonup v$ in $H^s(\mathbb{R}^3)$, $v_{\varepsilon}\rightarrow v$ in $L_{loc}^p(\mathbb{R}^3)$ for $1\leq p<2_s^{\ast}$ and $v_{\varepsilon}\rightarrow v$ a.e. in $\mathbb{R}^3$. It is easy to check that $v$ satisfies
\begin{equation}\label{equ4-4-0}
(-\Delta)^s v+V(y^{\ast}) v+\phi_{v}^t v=f(v)+(v^{+})^{2_s^{\ast}-1}    \quad x\in \mathbb{R}^3.
\end{equation}
Indeed, by the definition of weakly convergence, we have
\begin{align*}
\int_{\mathbb{R}^3}\int_{\mathbb{R}^3}\frac{(v_{\varepsilon}(z)-v_{\varepsilon}(y))(\varphi(z)-\varphi(y)}{|z-y|^{3+2s}}\,{\rm d }y\,{\rm d }z+\int_{\mathbb{R}^3}V(y^{\ast})v_{\varepsilon}\varphi\,{\rm d}z\rightarrow\\
\int_{\mathbb{R}^3}\int_{\mathbb{R}^3}\frac{(v(z)-v(y))(\varphi(z)-\varphi(y)}{|z-y|^{3+2s}}\,{\rm d }y\,{\rm d }z+\int_{\mathbb{R}^3}V(y^{\ast})v\varphi\,{\rm d}z
\end{align*}
for any $\varphi\in C_0^{\infty}(\mathbb{R}^3)$. Now given $\varphi\in C_0^{\infty}(\mathbb{R}^3)$, we have $\|\varphi(\cdot-y_{\varepsilon})\|_{H_{\varepsilon}}\leq C$ and so $\langle\mathcal{J}_{\varepsilon}'(u_{\varepsilon}),\varphi(\cdot-y_{\varepsilon})\rangle\rightarrow0$ as $\varepsilon\rightarrow0$. Using the fact that $v_{\varepsilon}\rightarrow v$ in $L_{loc}^p(\mathbb{R}^3)$ for $1\leq p<2_s^{\ast}$, the Lebesgue dominated convergence Theorem, the boundedness of ${\rm supp}(\varphi)$ and $(g_0)$--$(g_1)$, it follows that
\begin{align*}
\int_{\mathbb{R}^3}(V(\varepsilon z+\varepsilon y_{\varepsilon})-V(y^{\ast}))v_{\varepsilon}\varphi\,{\rm d }z\rightarrow0,\quad \int_{\mathbb{R}^3}(\phi_{v_{\varepsilon}}^tv_{\varepsilon}-\phi_v^tv)\varphi\,{\rm d }z\rightarrow0,
\end{align*}
\begin{align*}
\int_{\mathbb{R}^3\backslash\Lambda_{\varepsilon}}u_{\varepsilon}(z)\varphi(z-y_{\varepsilon})\,{\rm d}z=\int_{\mathbb{R}^3\backslash\Lambda_{\varepsilon}+y_{\varepsilon}}v_{\varepsilon}(z)\varphi(z)\,{\rm d}z\rightarrow0
\end{align*}
and
\begin{align*}
\int_{\mathbb{R}^3}\Big[g(\varepsilon z+\varepsilon y_{\varepsilon},v_{\varepsilon})-f(v)-(v^{+})^{2_s^{\ast}-1}\Big]\varphi\,{\rm d }z\rightarrow0
\end{align*}
for any $\varphi\in C_0^{\infty}(\mathbb{R}^3)$. Therefore, we get that
\begin{align*}
\int_{\mathbb{R}^3}\int_{\mathbb{R}^3}\frac{(v(z)-v(y))(\varphi(z)-\varphi(y))}{|z-y|^{3+2s}}\,{\rm d}y\,{\rm d}z&+\int_{\mathbb{R}^3}V(y^{\ast})v\varphi\,{\rm d}z&+\int_{\mathbb{R}^3}\phi_v^tv\varphi\,{\rm d}z\\
&-\int_{\mathbb{R}^3}g(v)\varphi\,{\rm d}z=0
\end{align*}
for any $\varphi\in C_0^{\infty}(\mathbb{R}^3)$. Since $\varphi$ is arbitrary and $C_0^{\infty}(\mathbb{R}^3)$ is dense in $H_{\varepsilon}$, it follows that $v$ satisfies \eqref{equ4-4-0}.

{\bf Case 1.} If $v\neq0$, then
\begin{align*}
&c_{V(y^{\ast})}\leq \mathcal{I}_{V(y^{\ast})}(v)=\mathcal{I}_{V(y^{\ast})}(v)-\frac{1}{4s+2t-3}\mathcal{G}_{V(y^{\ast})}(v)\\
&=s\int_{\mathbb{R}^3}V(y^{\ast})|v|^2\,{\rm d}z+\frac{s+t}{4s+2t-3}\int_{\mathbb{R}^3}[f(v)v-\frac{4s+2t}{s+t}F(v)]\,{\rm d}z\\
&+\frac{(2_s^{\ast}-4)s+(2_s^{\ast}-2)t}{2_s^{\ast}(4s+2t-3)}\int_{\mathbb{R}^3}(v^{+})^{2_s^{\ast}}\,{\rm d}z\\
&\leq s\|V\|_{L^{\infty}(\overline{\Lambda})}\int_{\mathbb{R}^3}|v|^2\,{\rm d}z+\frac{s+t}{4s+2t-3}\int_{\mathbb{R}^3}[f(v)v-\frac{4s+2t}{s+t}F(v)]\,{\rm d}z\\
&+\frac{(2_s^{\ast}-4)s+(2_s^{\ast}-2)t}{2_s^{\ast}(4s+2t-3)}\int_{\mathbb{R}^3}(v^{+})^{2_s^{\ast}}\,{\rm d}z.
\end{align*}
Hence, for sufficiently large $R>0$, by Fatou's Lemma, we have that
\begin{align*}
&\liminf_{\varepsilon\rightarrow0}\Big[s\|V\|_{L^{\infty}(\overline{\Lambda})}\int_{B_R(y_{\varepsilon})}|u_{\varepsilon}|^2\,{\rm d}z+\frac{s+t}{4s+2t-3}\int_{B_R(y_{\varepsilon})}[f(u_{\varepsilon})u_{\varepsilon}-\frac{4s+2t}{s+t}F(u_{\varepsilon})]\,{\rm d}z\\
&+\frac{(2_s^{\ast}-4)s+(2_s^{\ast}-2)t}{2_s^{\ast}(4s+2t-3)}\int_{B_R(y_{\varepsilon})}(u_{\varepsilon}^{+})^{2_s^{\ast}}\,{\rm d}z\Big]\\
&=\liminf_{\varepsilon\rightarrow0}\Big[s\|V\|_{L^{\infty}(\overline{\Lambda})}\int_{B_R(0)}|v_{\varepsilon}|^2\,{\rm d}z+\frac{s+t}{4s+2t-3}\int_{B_R(0)}[f(v_{\varepsilon})v_{\varepsilon}-\frac{4s+2t}{s+t}F(v_{\varepsilon})]\,{\rm d}z\\
&+\frac{(2_s^{\ast}-4)s+(2_s^{\ast}-2)t}{2_s^{\ast}(4s+2t-3)}\int_{B_R(0)}(v_{\varepsilon}^{+})^{2_s^{\ast}}\,{\rm d}z\Big]\\
&\geq\Big[s\|V\|_{L^{\infty}(\overline{\Lambda})}\int_{B_R(0)}|v|^2\,{\rm d}z+\frac{s+t}{4s+2t-3}\int_{B_R(0)}[f(v)v-\frac{4s+2t}{s+t}F(v)]\,{\rm d}z\\
&+\frac{(2_s^{\ast}-4)s+(2_s^{\ast}-2)t}{2_s^{\ast}(4s+2t-3)}\int_{B_R(0)}(v^{+})^{2_s^{\ast}}\,{\rm d}z\Big]\\
&\geq\frac{1}{2}\Big[s\|V\|_{L^{\infty}(\overline{\Lambda})}\int_{\mathbb{R}^3}|v|^2\,{\rm d}z+\frac{s+t}{4s+2t-3}\int_{\mathbb{R}^3}[f(v)v-\frac{4s+2t}{s+t}F(v)]\,{\rm d}z\\
&+\frac{(2_s^{\ast}-4)s+(2_s^{\ast}-2)t}{2_s^{\ast}(4s+2t-3)}\int_{B_R(0)}(v^{+})^{2_s^{\ast}}\,{\rm d}z\Big]\geq\frac{1}{2}c_{V(x^{\ast})}>0.
\end{align*}
On the other hand, by the Sobolev embedding theorem and \eqref{equ4-2}, one has
\begin{align*}
&s\|V\|_{L^{\infty}(\overline{\Lambda})}\int_{B_R(y_{\varepsilon})}|u_{\varepsilon}|^2\,{\rm d}z+\frac{s+t}{4s+2t-3}\int_{B_R(y_{\varepsilon})}[f(u_{\varepsilon})u_{\varepsilon}-\frac{4s+2t}{s+t}F(u_{\varepsilon})]\,{\rm d}z\\
&+\frac{(2_s^{\ast}-4)s+(2_s^{\ast}-2)t}{2_s^{\ast}(4s+2t-3)}\int_{B_R(y_{\varepsilon})}(u_{\varepsilon}^{+})^{2_s^{\ast}}\,{\rm d}z\\
&\leq Cd_0+C\int_{B_R(y_{\varepsilon})}\Big|\varphi(\varepsilon z-x_{\varepsilon})W_0(z-\frac{x_{\varepsilon}}{\varepsilon})\Big|^2\,{\rm d}z+C\int_{B_R(y_{\varepsilon})}\Big|\varphi(\varepsilon z-x_{\varepsilon})W_0(z-\frac{x_{\varepsilon}}{\varepsilon})\Big|^{2_s^{\ast}}\,{\rm d}z\\
&\leq Cd_0+C\int_{B_R(y_{\varepsilon}-\frac{x_{\varepsilon}}{\varepsilon})}|W_0|^2\,{\rm d}x+C\int_{B_R(y_{\varepsilon}-\frac{x_{\varepsilon}}{\varepsilon})}|W_0|^{2_s^{\ast}}\,{\rm d}x
\end{align*}
Observing that $y_{\varepsilon}\in A_{\varepsilon}$, implies that $|y_{\varepsilon}-\frac{x_{\varepsilon}}{\varepsilon}|\geq\frac{\beta}{2\varepsilon}$, then for $\varepsilon>0$ small enough, there hold
\begin{equation*}
\int_{B_R(y_{\varepsilon}-\frac{x_{\varepsilon}}{\varepsilon})}|W_0|^2\,{\rm d}z+\int_{B_R(y_{\varepsilon}-\frac{x_{\varepsilon}}{\varepsilon})}|W_0|^{2_s^{\ast}}\,{\rm d}z=o(1),
\end{equation*}
where $o(1)\rightarrow0$ as $\varepsilon\rightarrow0$. Thus, we have proved that
\begin{align*}
&\frac{1}{2}c_{V(y^{\ast})}\leq s\|V\|_{L^{\infty}(\overline{\Lambda})}\int_{B_R(y_{\varepsilon})}|u_{\varepsilon}|^2\,{\rm d}z+\frac{s+t}{4s+2t-3}\int_{B_R(y_{\varepsilon})}[f(u_{\varepsilon})u_{\varepsilon}\\
&-\frac{4s+2t}{s+t}F(u_{\varepsilon})]\,{\rm d}z+\frac{(2_s^{\ast}-4)s+(2_s^{\ast}-2)t}{2_s^{\ast}(4s+2t-3)}\int_{B_R(y_{\varepsilon})}(u_{\varepsilon}^{+})^{2_s^{\ast}}\,{\rm d}z\\
&\leq Cd_0+o(1).
\end{align*}
This leads to a contradiction if $d_0$ is small enough.

{\bf Case 2.} If $v=0$, i.e., $v_{\varepsilon}\rightharpoonup0$ in $H^s(\mathbb{R}^3)$, $v_{\varepsilon}\rightarrow 0$ in $L_{loc}^p(\mathbb{R}^3)$ for $1\leq p<2_s^{\ast}$ and $v_{\varepsilon}\rightarrow 0$ a.e. in $\mathbb{R}^3$. Now we claim that
\begin{equation}\label{equ4-5}
\lim_{\varepsilon\rightarrow0}\sup_{\varphi\in C_0^{\infty}(B_2(0)),\|\varphi\|=1}|\langle\rho_{\varepsilon},\varphi\rangle|=0,
\end{equation}
where $\rho_{\varepsilon}=-(-\Delta)^sv_{\varepsilon}+(v_{\varepsilon}^{+})^{2_s^{\ast}-1}\in \Big(H^s(\mathbb{R}^3)\Big)'$. It is easy to check that for $\varepsilon>0$ small, $\int_{\mathbb{R}^3\backslash\Lambda_{\varepsilon}}u_{\varepsilon}\varphi(z-y_{\varepsilon})\,{\rm d}z=0$ uniformly for any $\varphi\in C_0^{\infty}(B_2(0))$.
Thus, for any $\varphi\in C_0^{\infty}(B_2(0))$ with $\|\varphi\|=1$, we deduce that
\begin{align*}
&\langle\rho_{\varepsilon},\varphi\rangle=-\int_{\mathbb{R}^3}\frac{(v_{\varepsilon}(z)-v_{\varepsilon}(y))(\varphi(z)-\varphi(y))}{|z-y|^{3+2s}}\,{\rm d}y\,{\rm d}z+\int_{\mathbb{R}^3}(v_{\varepsilon}^{+})^{2_s^{\ast}-1}\varphi\,{\rm d}z\\
&=-\langle\mathcal{J}'_{\varepsilon}(u_{\varepsilon}),\varphi(\cdot-y_{\varepsilon})\rangle+\int_{\mathbb{R}^3}V(\varepsilon z)u_{\varepsilon}\varphi(z-y_{\varepsilon})\,{\rm d}z+\int_{\mathbb{R}^3}\phi_{u_{\varepsilon}}^tu_{\varepsilon}\varphi(z-y_{\varepsilon})\,{\rm d}z\\
&+\int_{\mathbb{R}^3}[(u_{\varepsilon}^{+})^{2_s^{\ast}-1}-g(\varepsilon z, u_{\varepsilon})]\varphi(z-y_{\varepsilon})\,{\rm d}z\\
&:=A_1+A_2+A_3+A_4.
\end{align*}
By the facts that $\lim\limits_{\varepsilon\rightarrow0}\mathcal{J}'_{\varepsilon}(u_{\varepsilon})=0$, ${\rm supp}\varphi\subset B_2$, $\sup\limits_{z\in B_2(0)}V(\varepsilon z+\varepsilon y_{\varepsilon})\leq C$ uniformly for all $\varepsilon>0$ small, $v_{\varepsilon}\rightarrow0$ in $L_{loc}^p(\mathbb{R}^3)$ for $1\leq p<2_s^{\ast}$, $\frac{12}{3+2t}<2_s^{\ast}$, and using H\"{o}lder's inequality, we deduce that
\begin{align*}
|A_1|\leq\|\mathcal{J}'_{\varepsilon}(u_{\varepsilon})\|_{(H_{\varepsilon})'}\|\varphi(\cdot-y_{\varepsilon})\|_{H_{\varepsilon}}\leq o(1)\|\varphi(\cdot-y_{\varepsilon})\|=o(1)\|\varphi\|\rightarrow0,
\end{align*}
\begin{align*}
|A_2|\leq\sup_{x\in B_2(0)}V(\varepsilon x+\varepsilon y_{\varepsilon})\Big(\int_{B_2(0)}|v_{\varepsilon}|^2\,{\rm d}z\Big)^{\frac{1}{2}}\Big(\int_{B_2(0)}|\varphi|^2\,{\rm d}z\Big)^{\frac{1}{2}}\rightarrow0,
\end{align*}
\begin{align*}
|A_3|&\leq\Big(\int_{\mathbb{R}^3}|\phi_{v_{\varepsilon}}^t|^{2_t^{\ast}}\,{\rm d}z\Big)^{\frac{1}{2_t^{\ast}}}\Big(\int_{B_2(0)}|v_{\varepsilon}|^{\frac{12}{3+2t}}\,{\rm d}z\Big)^{\frac{3+2t}{12}}\Big(\int_{B_2(0)}|\varphi|^{\frac{12}{3+2t}}\,{\rm d}z\Big)^{\frac{3+2t}{12}}\rightarrow0,
\end{align*}
uniformly for $\varphi\in C_0^{\infty}(B_2(0))$ with $\|\varphi\|=1$. By $(f_0)$ and $(f_1)$, for any $\eta>0$, there exists $C_{\eta}>0$ such that
\begin{equation*}
f(\tau)\leq C_{\eta}|\tau|+\eta|\tau|^{2_s^{\ast}-1}\quad \text{for any}\,\, \tau\geq0
\end{equation*}
and
\begin{equation*}
\lim_{\tau\rightarrow+\infty}\frac{g(\varepsilon z+\varepsilon y_{\varepsilon},\tau)-f(\tau)-(\tau^{+})^{2_s^{\ast}-1}}{(\tau^{+})^{2_s^{\ast}-1}}=0
\end{equation*}
uniformly for $z\in B_2(0)$ and for $\varepsilon$ small enough and then, we have
\begin{align*}
|A_4|&=\Big|\int_{\mathbb{R}^3}[(v_{\varepsilon}^{+})^{2_s^{\ast}-1}-g(\varepsilon z+\varepsilon y_{\varepsilon}, v_{\varepsilon})]\varphi\,{\rm d}z\Big|=\Big|-\int_{\mathbb{R}^3}f( v_{\varepsilon})\varphi\,{\rm d}z\\
&+\int_{\mathbb{R}^3}[(v_{\varepsilon}^{+})^{2_s^{\ast}-1}+f(v_{\varepsilon})-g(\varepsilon z+\varepsilon y_{\varepsilon}, v_{\varepsilon})]\varphi\,{\rm d}z\Big|\\
&\leq\eta\Big(\int_{\mathbb{R}^3}|v_{\varepsilon}|^{2_s^{\ast}}\,{\rm d}z\Big)^{\frac{2_s^{\ast}-1}{2_s^{\ast}}}\Big(\int_{\mathbb{R}^3}|\varphi|^{2_s^{\ast}}\,{\rm d}z\Big)^{\frac{1}{2_s^{\ast}}}+C_{\eta}\Big(\int_{B_2(0)}|v_{\varepsilon}|^2\,{\rm d}z\Big)^{\frac{1}{2}}\Big(\int_{B_2(0)}|\varphi|^{2}\,{\rm d}z\Big)^{\frac{1}{2}}\\
&+\Big|\int_{B_2(0)}[(v_{\varepsilon}^{+})^{2_s^{\ast}-1}+f(v_{\varepsilon})-g(\varepsilon z+\varepsilon y_{\varepsilon}, v_{\varepsilon})]\varphi\,{\rm d}z\Big|\\
&\leq2\eta\Big(\int_{\mathbb{R}^3}|v_{\varepsilon}|^{2_s^{\ast}}\,{\rm d}z\Big)^{\frac{2_s^{\ast}-1}{2_s^{\ast}}}\Big(\int_{\mathbb{R}^3}|\varphi|^{2_s^{\ast}}\,{\rm d}z\Big)^{\frac{1}{2_s^{\ast}}}+2C_{\eta}\Big(\int_{B_2(0)}|v_{\varepsilon}|^2\,{\rm d}z\Big)^{\frac{1}{2}}\Big(\int_{B_2(0)}|\varphi|^{2}\,{\rm d}z\Big)^{\frac{1}{2}}.
\end{align*}
for $\varepsilon$ sufficiently small. Letting $\varepsilon\rightarrow0$ and then $\eta\rightarrow0$ in the above inequality, we see that $A_4\rightarrow0$ as $\varepsilon\rightarrow0$ uniformly for $\varphi\in C_0^{\infty}(B_2(0))$ with $\|\varphi\|=1$. Hence \eqref{equ4-5} holds.

By Proposition \ref{pro2-3}, taking $Q=B_1(0)$ and $V=B_2(0)$, from \eqref{equ4-4} and \eqref{equ4-5}, it follows that there exists $\widetilde{z}_{\varepsilon}\in\mathbb{R}^3$ and $\sigma_{\varepsilon}>0$ with $\widetilde{z}_{\varepsilon}\rightarrow \widetilde{z}\in\overline{B_1(0)}$, $\sigma_{\varepsilon}\rightarrow0$ as $\varepsilon\rightarrow0$, such that
\begin{equation*}
\widetilde{v}_{\varepsilon}(z):=\sigma_{\varepsilon}^{\frac{3-2s}{2}}v_{\varepsilon}(\sigma_{\varepsilon}z+\widetilde{z}_{\varepsilon})\rightharpoonup \widetilde{v}\quad \text{in}\,\, \mathcal{D}^{s,2}(\mathbb{R}^3)
\end{equation*}
and $\widetilde{v}\geq0$ is a nontrivial solution of
\begin{equation}\label{equ4-6}
(-\Delta)^sv=v^{2_s^{\ast}-1},\quad v\in\mathcal{D}^{s,2}(\mathbb{R}^3).
\end{equation}
By Theorem 1.1 in \cite{CKT} and Claim 6 in \cite{SV}, we see that
\begin{equation*}
\widetilde{v}(z)=\frac{\bar{u}(z/\mathcal{S}_s^{\frac{1}{2s}})}{\|\bar{u}\|_{L^{2_s^{\ast}}}},\quad \bar{u}(z)=\kappa(\mu^2+|z-x_0|^2)^{-\frac{3-2s}{2}}
\end{equation*}
for some $\kappa>0$, $\mu>0$, $x_0\in\mathbb{R}^3$, and
\begin{equation}\label{equ4-6-0}
\int_{\mathbb{R}^3}|D_s\widetilde{v}|^2\,{\rm d}z=\int_{\mathbb{R}^3}|\widetilde{v}|^{2_s^{\ast}}\,{\rm d}z=\mathcal{S}_s^{\frac{3}{2s}}.
\end{equation}
Thus, there exists $R>0$ such that
\begin{equation*}
\int_{B_R(0)}|\widetilde{v}|^{2_s^{\ast}}\,{\rm d}z\geq\frac{1}{2}\int_{\mathbb{R}^3}|\widetilde{v}|^{2_s^{\ast}}\,{\rm d}z=\frac{1}{2}\mathcal{S}_s^{\frac{3}{2s}}>0.
\end{equation*}
On the other hand, using the facts that $\sigma_{\varepsilon}\rightarrow0$ and $\widetilde{z}_{\varepsilon}\rightarrow \widetilde{z}\in\overline{B_1(0)}$ (imply that $B_{\sigma_{\varepsilon}R}(\widetilde{z}_{\varepsilon}+z_{\varepsilon})\subset B_2(z_{\varepsilon})$ for $\varepsilon$ small), we have that
\begin{align}\label{equ4-7}
&\int_{B_R(0)}\widetilde{v}^{2_s^{\ast}}\,{\rm d}z\leq\liminf_{\varepsilon\rightarrow0}\int_{B_R(0)}\widetilde{v}_{\varepsilon}^{2_s^{\ast}}\,{\rm d}z=\liminf_{\varepsilon\rightarrow0}\int_{B_{\sigma_{\varepsilon}R}(\widetilde{z}_{\varepsilon})}v_{\varepsilon}^{2_s^{\ast}}\,{\rm d}z\nonumber\\
&=\liminf_{\varepsilon\rightarrow0}\int_{B_{\sigma_{\varepsilon}R}(\widetilde{z}_{\varepsilon}+y_{\varepsilon})}u_{\varepsilon}^{2_s^{\ast}}\,{\rm d}z\leq\liminf_{\varepsilon\rightarrow0}\int_{B_2(y_{\varepsilon})}u_{\varepsilon}^{2_s^{\ast}}\,{\rm d}z.
\end{align}
But, by the Sobolev imbedding Theorem and \eqref{equ4-2}, we get
\begin{align*}
\int_{B_2(y_{\varepsilon})}u_{\varepsilon}^{2_s^{\ast}}\,{\rm d}z&\leq Cd_0+C\int_{B_2(y_{\varepsilon})}(\varphi(\varepsilon x-x_{\varepsilon})W_0(x-\frac{x_{\varepsilon}}{\varepsilon}))^{2_s^{\ast}}\,{\rm d}x\\
&\leq Cd_0+C\int_{B_2(y_{\varepsilon}-\frac{x_{\varepsilon}}{\varepsilon})}W_0^{2_s^{\ast}}\,{\rm d}x=cd_0+o(1)
\end{align*}
since $|y_{\varepsilon}-\frac{x_{\varepsilon}}{\varepsilon}|\geq\frac{\beta}{2\varepsilon}$. This leads to a contradiction with \eqref{equ4-7} for $d_0>0$ small. Hence the claim \eqref{equ4-3} holds.

From \eqref{equ4-3}, by Proposition \ref{pro2-1}, we conclude that
\begin{equation}\label{equ4-8}
\lim_{\varepsilon\rightarrow0}\int_{A_{\varepsilon}^1}|u_{\varepsilon}|^{2_s^{\ast}}\,{\rm d}z=0,
\end{equation}
where $A_{\varepsilon}^1=B_{2\beta/\varepsilon}(\frac{x_{\varepsilon}}{\varepsilon})\backslash B_{\beta/\varepsilon}(\frac{x_{\varepsilon}}{\varepsilon})$. Indeed, taking a smooth cut-off function $\psi_{\varepsilon}\in C_0^{\infty}(\mathbb{R}^3)$ such that $\psi_{\varepsilon}=1$ on $B_{2\beta/\varepsilon}(\frac{x_{\varepsilon}}{\varepsilon})\backslash B_{\beta/\varepsilon}(\frac{x_{\varepsilon}}{\varepsilon})$, $\psi_{\varepsilon}=0$ on $A_{\varepsilon}^2=B_{3\beta/\varepsilon-1}(\frac{x_{\varepsilon}}{\varepsilon})\backslash B_{\beta/2\varepsilon+1}(\frac{x_{\varepsilon}}{\varepsilon})$.  Since $u_{\varepsilon}\in H_{\varepsilon}$ and using $(V_0)$, it is easy to check that $u_{\varepsilon}\psi_{\varepsilon}\in H^s(\mathbb{R}^3)$. Moreover,
\begin{equation*}
\sup_{y\in A_{\varepsilon}}\int_{B_1(y)}|u_{\varepsilon}|^{2_s^{\ast}}\,{\rm d}z\geq\sup_{y\in\mathbb{R}^3}\int_{B_1(y)}|u_{\varepsilon}\psi_{\varepsilon}|^{2_s^{\ast}}\,{\rm d}z.
\end{equation*}
By Proposition \ref{pro2-1}, we have
\begin{equation*}
\int_{\mathbb{R}^3}|u_{\varepsilon}\psi_{\varepsilon}|^{2_s^{\ast}}\,{\rm d}z\rightarrow0\quad \text{as}\,\, \varepsilon\rightarrow0.
\end{equation*}
Since $A_{\varepsilon}^1\subset A_{\varepsilon}^2$ for $\varepsilon>0$ small, so \eqref{equ4-8} holds. Therefore, by the interpolation inequality, it is not difficult to verify that
\begin{equation}\label{equ4-9}
\lim_{\varepsilon\rightarrow0}\int_{A_{\varepsilon}^1}|u_{\varepsilon}|^p\,{\rm d}z=0\quad \text{for all} \,\, p\in(2,2_s^{\ast}].
\end{equation}

Set $u_{\varepsilon,1}(z)=\varphi(\varepsilon z-x_{\varepsilon})u_{\varepsilon}(z)$, $u_{\varepsilon,2}(z)=(1-\varphi(\varepsilon z-x_{\varepsilon}))u_{\varepsilon}(z)$. As the same proof of (26) in \cite{Teng3}, we obtain that
\begin{align}\label{equ4-9-0}
\int_{\mathbb{R}^3}|D_s u_{\varepsilon}|^2\,{\rm d}z&=\int_{\mathbb{R}^3}|D_s u_{\varepsilon,1}|^2\,{\rm d}z+\int_{\mathbb{R}^3}|D_s u_{\varepsilon,2}|^2\,{\rm d}z\nonumber\\
&+2\int_{\mathbb{R}^3}\frac{(u_{\varepsilon,1}(x)-u_{\varepsilon,1}(y))(u_{\varepsilon,2}(x)-u_{\varepsilon,2}(y))}{|x-y|^{3+2s}}\,{\rm d}y\,{\rm d}z\nonumber\\
&\geq\int_{\mathbb{R}^3}|D_su_{\varepsilon,1}|^2\,{\rm d}z+\int_{\mathbb{R}^3}|D_su_{\varepsilon,2}|^2\,{\rm d}z+o(1).
\end{align}

By \eqref{equ4-9}, we deduce that
\begin{align*}
\int_{\mathbb{R}^3}V(\varepsilon z)|u_{\varepsilon}|^2\,{\rm d}z\geq\int_{\mathbb{R}^3}V(\varepsilon z)|u_{\varepsilon,1}|^2\,{\rm d}z+\int_{\mathbb{R}^3}V(\varepsilon z)|u_{\varepsilon,2}|^2\,{\rm d}z
\end{align*}
\begin{align*}
\int_{\mathbb{R}^3}\phi_{u_{\varepsilon}}^t|u_{\varepsilon}|^2\,{\rm d}z\geq\int_{\mathbb{R}^3}\phi_{u_{\varepsilon,1}}^t|u_{\varepsilon,1}|^2\,{\rm d}z+\int_{\mathbb{R}^3}\phi_{u_{\varepsilon,2}}^t|u_{\varepsilon,2}|^2\,{\rm d}z
\end{align*}
\begin{align*}
\int_{\mathbb{R}^3}G(\varepsilon z, u_{\varepsilon})\,{\rm d}z=\int_{\mathbb{R}^3}G(\varepsilon z, u_{\varepsilon,1})\,{\rm d}z+\int_{\mathbb{R}^3}G(\varepsilon z, u_{\varepsilon,2})\,{\rm d}z+o(1)\,\, \text{as}\,\,\varepsilon\rightarrow0
\end{align*}
\begin{align*}
\int_{\mathbb{R}^3}(u_{\varepsilon}^{+})^{2_s^{\ast}}\,{\rm d}z=\int_{\mathbb{R}^3}(u_{\varepsilon,1}^{+})^{2_s^{\ast}}\,{\rm d}z+\int_{\mathbb{R}^3}(u_{\varepsilon,2}^{+})^{2_s^{\ast}}\,{\rm d}z+o(1)\,\, \text{as}\,\,\varepsilon\rightarrow0
\end{align*}
and
\begin{equation*}
Q_{\varepsilon}(u_{\varepsilon,1})=0, \quad Q_{\varepsilon}(u_{\varepsilon,2})=Q_{\varepsilon}(u_{\varepsilon})\geq0.
\end{equation*}
Hence, we get
\begin{equation}\label{equ4-10}
\mathcal{J}_{\varepsilon}(u_{\varepsilon})\geq P_{\varepsilon}(u_{\varepsilon,1})+P_{\varepsilon}(u_{\varepsilon,2})+o(1),
\end{equation}
where $o(1)\rightarrow0$ as $\varepsilon\rightarrow0$.

We now estimate $P_{\varepsilon}(u_{\varepsilon,2})$. It follows from \eqref{equ4-2} that
\begin{align*}
\|u_{\varepsilon,2}\|_{H_{\varepsilon}}&\leq\|u_{\varepsilon,1}-\varphi_{\varepsilon}(\cdot-\frac{x_{\varepsilon}}{\varepsilon})W_0(\cdot-\frac{x_{\varepsilon}}{\varepsilon})\|_{H_{\varepsilon}}+2d_0\\
&=\|u_{\varepsilon,1}-\varphi_{\varepsilon}(\cdot-\frac{x_{\varepsilon}}{\varepsilon})W_0(\cdot-\frac{x_{\varepsilon}}{\varepsilon})\|_{H_{\varepsilon}(B_{2\beta/\varepsilon}(x_{\varepsilon}/\varepsilon))}\\
&+\|u_{\varepsilon,1}-\varphi_{\varepsilon}(\cdot-\frac{x_{\varepsilon}}{\varepsilon})W_0(\cdot-\frac{x_{\varepsilon}}{\varepsilon})\|_{H_{\varepsilon}(\mathbb{R}^3\backslash B_{2\beta/\varepsilon}(x_{\varepsilon}/\varepsilon))}+2d_0\\
&=\|u_{\varepsilon,1}-\varphi_{\varepsilon}(\cdot-\frac{x_{\varepsilon}}{\varepsilon})W_0(\cdot-\frac{x_{\varepsilon}}{\varepsilon})\|_{H_{\varepsilon}(B_{2\beta/\varepsilon}(x_{\varepsilon}/\varepsilon))}+2d_0+o(1)\\
&\leq\|u_{\varepsilon,2}\|_{H_{\varepsilon}(B_{2\beta/\varepsilon}(x_{\varepsilon}/\varepsilon))}+4d_0+o(1)\\
&=\|u_{\varepsilon,2}\|_{H_{\varepsilon}(B_{2\beta/\varepsilon}(x_{\varepsilon}/\varepsilon)\backslash B_{\beta/\varepsilon}(x_{\varepsilon}/\varepsilon))}+\|u_{\varepsilon,2}\|_{H_{\varepsilon}(B_{\beta/\varepsilon}(x_{\varepsilon}/\varepsilon))}+4d_0+o(1)\\
&=\|u_{\varepsilon,2}\|_{H_{\varepsilon}(B_{2\beta/\varepsilon}(x_{\varepsilon}/\varepsilon)\backslash B_{\beta/\varepsilon}(x_{\varepsilon}/\varepsilon))}+4d_0+o(1)\\
&\leq C\|u_{\varepsilon}\|_{H_{\varepsilon}(B_{2\beta/\varepsilon}(x_{\varepsilon}/\varepsilon)\backslash B_{\beta/\varepsilon}(x_{\varepsilon}/\varepsilon))}+4d_0+o(1)\\
&\leq C\|\varphi_{\varepsilon}(\cdot-\frac{x_{\varepsilon}}{\varepsilon})W_0(\cdot-\frac{x_{\varepsilon}}{\varepsilon})\|_{H_{\varepsilon}(B_{2\beta/\varepsilon}(x_{\varepsilon}/\varepsilon)\backslash B_{\beta/\varepsilon}(x_{\varepsilon}/\varepsilon))}+6d_0+o(1)\\
&\leq\|W_0(\cdot-\frac{x_{\varepsilon}}{\varepsilon})\|_{H_{\varepsilon}(B_{2\beta/\varepsilon}(x_{\varepsilon}/\varepsilon)\backslash B_{\beta/\varepsilon}(x_{\varepsilon}/\varepsilon))}+6d_0+o(1)\\
&=\|W_0\|_{H_{\varepsilon}(B_{2\beta/\varepsilon}(0)\backslash B_{\beta/\varepsilon}(0))}+6d_0+o(1)\\
&\leq6d_0+o(1),
\end{align*}
where $o(1)\rightarrow0$ as $\varepsilon\rightarrow0$ and using the similar arguments as \eqref{equ4-9-0}, we can prove that
\begin{equation*}
\|u_{\varepsilon,1}-\varphi_{\varepsilon}(\cdot-\frac{x_{\varepsilon}}{\varepsilon})W_0(\cdot-\frac{x_{\varepsilon}}{\varepsilon})\|_{H_{\varepsilon}(\mathbb{R}^3\backslash B_{2\beta/\varepsilon}(x_{\varepsilon}/\varepsilon))}=o(1)
\end{equation*}
and
\begin{equation*}
\|u_{\varepsilon,2}\|_{H_{\varepsilon}(B_{\beta/\varepsilon}(x_{\varepsilon}/\varepsilon))}=o(1).
\end{equation*}
Furthermore, the above inequality implies that
\begin{equation}\label{equ4-11}
\limsup\limits_{\varepsilon\rightarrow0}\|u_{\varepsilon,2}\|_{H_{\varepsilon}}\leq 6d_0.
\end{equation}
Then, we get
\begin{align}\label{equ4-12}
P_{\varepsilon}(u_{\varepsilon,2})&\geq\frac{1}{2}\|u_{\varepsilon,2}\|_{H_{\varepsilon}}^2-\int_{\mathbb{R}^3}F(u_{\varepsilon,2})\,{\rm d}z-\frac{1}{2_s^{\ast}}\int_{\mathbb{R}^3}(u_{\varepsilon,2}^{+})^{2_s^{\ast}}\,{\rm d}z\nonumber\\
&\geq\frac{1}{4}\|u_{\varepsilon,2}\|_{H_{\varepsilon}}^2-C\|u_{\varepsilon,2}\|_{H_{\varepsilon}}^{2_s^{\ast}}\nonumber\\
&=\|u_{\varepsilon,2}\|_{H_{\varepsilon}}^2(\frac{1}{4}-C\|u_{\varepsilon,2}\|_{H_{\varepsilon}}^{2_s^{\ast}-2})\nonumber\\
&\geq\|u_{\varepsilon,2}\|_{H_{\varepsilon}}^2(\frac{1}{4}-C(6d_0)^{2_s^{\ast}-2}).
\end{align}
In particular, taking $d_0>0$ small enough, we can assume that $P_{\varepsilon}(u_{\varepsilon,2})\geq0$. Hence, from \eqref{equ4-10}, it holds
\begin{equation}\label{equ4-13}
\mathcal{J}_{\varepsilon}(u_{\varepsilon})\geq P_{\varepsilon}(u_{\varepsilon,1})+o(1).
\end{equation}
Furthermore, by \eqref{equ4-9}, it is easy to check that
\begin{align*}
\int_{\mathbb{R}^3}\phi_{u_{\varepsilon}}^tu_{\varepsilon,1}u_{\varepsilon,2}\,{\rm d}z\leq\int_{\mathcal{T}_{\varepsilon}^1}\phi_{u_{\varepsilon}}^t|u_{\varepsilon}|^2\,{\rm d}z\leq\|\phi_{u_{\varepsilon}}^t\|_{2_t^{\ast}}\|u_{\varepsilon}\|_{L^{\frac{12}{3+2t}}(\mathcal{T}_{\varepsilon}^1)}^2\rightarrow0
\end{align*}
and
\begin{align*}
\int_{\mathbb{R}^3}\frac{(u_{\varepsilon,1}(z)-u_{\varepsilon,1}(y))(u_{\varepsilon,2}(z)-u_{\varepsilon,2}(y))}{|x-y|^{3+2s}}\,{\rm d}y\,{\rm d}z\geq o(1).
\end{align*}
Hence, using the facts that $\langle\mathcal{J}_{\varepsilon}'(u_{\varepsilon}),u_{\varepsilon,2}\rangle\rightarrow0$ as $\varepsilon\rightarrow0$, $\langle Q_{\varepsilon}'(u_{\varepsilon}),u_{\varepsilon,2}\rangle\geq0$, we have that
\begin{align*}
&\|u_{\varepsilon,2}\|_{H_{\varepsilon}}^2+o(1)\\
&\leq\|u_{\varepsilon,2}\|_{H_{\varepsilon}}^2+\int_{\mathbb{R}^3}\frac{(u_{\varepsilon,1}(z)-u_{\varepsilon,1}(y))(u_{\varepsilon,2}(z)-u_{\varepsilon,2}(y))}{|x-y|^{3+2s}}\,{\rm d}y\,{\rm d}z\\
&+\int_{\mathbb{R}^3}V(\varepsilon x)u_{\varepsilon,1}u_{\varepsilon,2}\,{\rm d}z+\int_{\mathbb{R}^3}\phi_{u_{\varepsilon}}^tu_{\varepsilon,1}u_{\varepsilon,2}\,{\rm d}z\\
&\leq\int_{\mathbb{R}^3}\frac{(u_{\varepsilon,1}(z)-u_{\varepsilon,1}(y))(u_{\varepsilon,2}(z)-u_{\varepsilon,2}(y))}{|x-y|^{3+2s}}\,{\rm d}y\,{\rm d}z+\int_{\mathbb{R}^3}V(\varepsilon x)u_{\varepsilon}u_{\varepsilon,2}\,{\rm d}z+\langle Q_{\varepsilon}'(u_{\varepsilon}),u_{\varepsilon,2}\rangle\\
&+\int_{\mathbb{R}^3}\phi_{u_{\varepsilon}}^tu_{\varepsilon}u_{\varepsilon,2}\,{\rm d}z+o(1)=\int_{\mathbb{R}^3}g(\varepsilon z, u_{\varepsilon})u_{\varepsilon,2}\,{\rm d}x+o(1)\\
&\leq \eta\int_{\mathbb{R}^3}|u_{\varepsilon}u_{\varepsilon,2}|\,{\rm d}z+C\int_{\mathbb{R}^3}|u_{\varepsilon}|^{2_s^{\ast}-1}|u_{\varepsilon,2}|\,{\rm d}z+o(1)\\
&\leq\eta\|u_{\varepsilon,2}\|_{L^2}^2+C\int_{\mathbb{R}^3}\Big(|u_{\varepsilon,2}|^{2_s^{\ast}}+|u_{\varepsilon,1}|^{2_s^{\ast}-1}|u_{\varepsilon,2}|\Big)\,{\rm d}x+o(1)\\
&\leq\eta\|u_{\varepsilon,2}\|_{H_{\varepsilon}}^2+C\|u_{\varepsilon,2}\|_{H_{\varepsilon}}^{2_s^{\ast}}+o(1)
\end{align*}
which yields to
\begin{equation*}
\|u_{\varepsilon,2}\|_{H_{\varepsilon}}^2\leq C\|u_{\varepsilon,2}\|_{H_{\varepsilon}}^{2_s^{\ast}}+o(1).
\end{equation*}
Combining with \eqref{equ4-11}, we get that for $d>0$ sufficiently small,
\begin{equation}\label{equ4-13-0}
\lim\limits_{\varepsilon\rightarrow0}\|u_{\varepsilon,2}\|_{H_{\varepsilon}}=0.
\end{equation}

We next estimate $P_{\varepsilon}(u_{\varepsilon,1})$. Denote $\widehat{u}_{\varepsilon}(z)=u_{\varepsilon,1}(z+\frac{x_{\varepsilon}}{\varepsilon})=\varphi(\varepsilon z)u_{\varepsilon}(z+\frac{x_{\varepsilon}}{\varepsilon})$, then $\{\widehat{u}_{\varepsilon}\}$ is bounded in $H^s(\mathbb{R}^3)$ by virtue of $(V_0)$. Thus, up to a subsequence, we may assume that there exists a $\widehat{u}\in H^s(\mathbb{R}^3)$ such that $\widehat{u}_{\varepsilon}\rightharpoonup\widehat{u}$ in $H^s(\mathbb{R}^3)$, $\widehat{u}_{\varepsilon}\rightarrow\widehat{u}$ in $L_{loc}^p(\mathbb{R}^3)$ for $1\leq p<2_s^{\ast}$ and $\widehat{u}_{\varepsilon}\rightarrow\widehat{u}$ a.e. in $\mathbb{R}^3$. We now claim that
\begin{equation}\label{equ4-14}
\widehat{u}_{\varepsilon}\rightarrow\widehat{u}\quad \text{in}\,\, L^{2_s^{\ast}}(\mathbb{R}^3).
\end{equation}
In view of Proposition \ref{pro2-1}, suppose the contrary that there exists $r>0$ such that
\begin{equation*}
\liminf_{\varepsilon\rightarrow0}\sup_{y\in\mathbb{R}^3}\int_{B_1(y)}|\widehat{u}_{\varepsilon}-\widehat{u}|^{2_s^{\ast}}\,{\rm d}z=2r>0.
\end{equation*}
Thus, for $\varepsilon>0$ small, there exists $\widehat{y}_{\varepsilon}\in\mathbb{R}^3$ such that
\begin{equation}\label{equ4-15}
\int_{B_1(\widehat{y}_{\varepsilon})}|\widehat{u}_{\varepsilon}-\widehat{u}|^{2_s^{\ast}}\,{\rm d}z\geq r>0.
\end{equation}

$\bullet$ $\{\widehat{y}_{\varepsilon}\}$ is bounded in $\mathbb{R}^3$, then there exists $r_0>0$ such that $|\widehat{y}_{\varepsilon}|\leq r_0$. Let $\widehat{v}_{\varepsilon}=\widehat{u}_{\varepsilon}-\widehat{u}$, then $\widehat{v}_{\varepsilon}\rightharpoonup0$ in $H^s(\mathbb{R}^3)$, for $\varepsilon>0$ small, by \eqref{equ4-15}, it holds
\begin{equation}\label{equ4-16}
\int_{B_{r_0+1}(0)}|\widehat{v}_{\varepsilon}|^{2_s^{\ast}}\,{\rm d}z\geq r>0.
\end{equation}
We now are to prove that
\begin{equation}\label{equ4-17}
\lim_{\varepsilon\rightarrow0}\sup_{\widehat{\varphi}\in C_0^{\infty}(B_{r_0+2}(0)),\|\widehat{\varphi}\|=1}|\langle \widehat{\rho}_{\varepsilon},\widehat{\varphi}\rangle|=0,
\end{equation}
where $\widehat{\rho}_{\varepsilon}=-(-\Delta)^s\widehat{v}_{\varepsilon}+(\widehat{v}_{\varepsilon}^{+})^{2_s^{\ast}-1}\in (H^s(\mathbb{R}^3))'$. For $\varepsilon>0$ small, $\int_{\mathbb{R}^3\backslash\Lambda_{\varepsilon}}u_{\varepsilon}\widehat{\varphi}(z-\frac{x_{\varepsilon}}{\varepsilon})\,{\rm d}z=0$ uniformly for all $\widehat{\varphi}\in C_0^{\infty}(B_{r_0+2}(0))$. Hence, by virtue of $\lim\limits_{\varepsilon\rightarrow0}\|u_{\varepsilon,2}\|_{H_{\varepsilon}}=0$, we have
\begin{align*}
o(1)&=\langle\mathcal{J}_{\varepsilon}'(u_{\varepsilon}),\widehat{\varphi}(\cdot-\frac{x_{\varepsilon}}{\varepsilon})\rangle=\int_{\mathbb{R}^3}\frac{(u_{\varepsilon}(z+\frac{x_{\varepsilon}}{\varepsilon})-u_{\varepsilon}(y+\frac{x_{\varepsilon}}{\varepsilon}))(\widehat{\varphi}(z)-\widehat{\varphi}(y))}{|x-y|^{3+2s}}\,{\rm d}y\,{\rm d}z\\
&+\int_{\mathbb{R}^3}V(\varepsilon z+x_{\varepsilon})u_{\varepsilon}(z+\frac{x_{\varepsilon}}{\varepsilon})\widehat{\varphi}\,{\rm d}z+\int_{\mathbb{R}^3}\phi_{u_{\varepsilon}(z+\frac{x_{\varepsilon}}{\varepsilon})}^tu_{\varepsilon}(z+\frac{x_{\varepsilon}}{\varepsilon})\widehat{\varphi}\,{\rm d}z\\
&-\int_{\mathbb{R}^3}g(\varepsilon z+x_{\varepsilon},u_{\varepsilon}(z+\frac{x_{\varepsilon}}{\varepsilon}))\widehat{\varphi}\,{\rm d}z\\
&=\int_{\mathbb{R}^3}\frac{(\widehat{u}_{\varepsilon}(z)-\widehat{u}_{\varepsilon}(y))(\widehat{\varphi}(z)-\widehat{\varphi}(y))}{|x-y|^{3+2s}}\,{\rm d}y\,{\rm d}z+\int_{\mathbb{R}^3}V(\varepsilon z+x_{\varepsilon})\widehat{u}_{\varepsilon}\widehat{\varphi}\,{\rm d}z\\
&+\int_{\mathbb{R}^3}\phi_{\widehat{u}_{\varepsilon}}^t\widehat{u}_{\varepsilon}\widehat{\varphi}\,{\rm d}z-\int_{\mathbb{R}^3}g(\varepsilon z+x_{\varepsilon},\widehat{u}_{\varepsilon})\widehat{\varphi}\,{\rm d}z+o(1).
\end{align*}
Combining the above estimate with $x_{\varepsilon}\rightarrow x_0\in\mathcal{M}^{\beta}$ , we see that $\widehat{u}\geq0$ is a solution of
\begin{equation}\label{equ4-18}
(-\Delta)^s \widehat{u}+V(x_0) \widehat{u}+\phi_{\widehat{u}}^t \widehat{u}=f(\widehat{u})+(\widehat{u}^{+})^{2_s^{\ast}-1} \quad  \text{in}\,\,\mathbb{R}^3.
\end{equation}
On the other hand, the following Brezis-Lieb splitting properties hold:
\begin{align*}
\int_{\mathbb{R}^3}\frac{(\widehat{u}_{\varepsilon}(z)-\widehat{u}_{\varepsilon}(y))-(\widehat{v}_{\varepsilon}(z)-\widehat{v}_{\varepsilon}(y))-(\widehat{u}(z)-\widehat{u}(y))}{|x-y|^{3+2s}}(\widehat{\varphi}(z)-\widehat{\varphi}(y))\,{\rm d}y\,{\rm d}z=o(1),
\end{align*}
\begin{align*}
\int_{\mathbb{R}^3}\Big(V(\varepsilon z+x_{\varepsilon})\widehat{u}_{\varepsilon}-V(\varepsilon z+x_{\varepsilon})\widehat{v}_{\varepsilon}-V(x_0)\widehat{u}\Big)\widehat{\varphi}\,{\rm d}z=o(1),
\end{align*}
\begin{align*}
\int_{\mathbb{R}^3}\Big(\phi_{\widehat{u}_{\varepsilon}}^t\widehat{u}_{\varepsilon}-\phi_{\widehat{v}_{\varepsilon}}^t\widehat{v}_{\varepsilon}-\phi_{\widehat{u}}^t\widehat{u}\Big)\widehat{\varphi}\,{\rm d}z=o(1),\quad \int_{\mathbb{R}^3}\Big(f(\widehat{u}_{\varepsilon})-f(\widehat{v}_{\varepsilon})-f(\widehat{u})\Big)\widehat{\varphi}\,{\rm d}z=o(1)
\end{align*}
and
\begin{align*}
\int_{\mathbb{R}^3}\Big(\widehat{u}_{\varepsilon}^{+})^{2_s^{\ast}-1}-(\widehat{v}_{\varepsilon}^{+})^{2_s^{\ast}-1}-(\widehat{u}^{+})^{2_s^{\ast}-1}\Big)\widehat{\varphi}\,{\rm d}z=o(1)
\end{align*}
uniformly for all $\widehat{\varphi}\in C_0^{\infty}(B_{r_0+2}(0))$ with $\|\widehat{\varphi}\|=1$. Thus, \eqref{equ4-17} is proved.

By Proposition \ref{pro2-3}, there exist $\widehat{z}_{\varepsilon}\in\mathbb{R}^3$ and $\widehat{\sigma}_{\varepsilon}>0$ such that $\widehat{z}_{\varepsilon}\rightarrow \widehat{z}\in B_{r_0+1}(0)$, $\widehat{\sigma}_{\varepsilon}\rightarrow0$ and
\begin{equation*}
\overline{w}_{\varepsilon}(z)=\widehat{\sigma}_{\varepsilon}^{\frac{3-2s}{2}}\widehat{v}_{\varepsilon}(\widehat{\sigma}_{\varepsilon} z+\widehat{z}_{\varepsilon})\rightharpoonup\overline{w} \quad \text{in}\,\, \mathcal{D}^{s,2}(\mathbb{R}^3),
\end{equation*}
where $\overline{w}\geq0$ is a nontrivial solution of \eqref{equ4-6} and satisfies \eqref{equ4-6-0}.

Since
\begin{align*}
\int_{\mathbb{R}^3}|\overline{w}|^{2_s^{\ast}}\,{\rm d}z&\leq\liminf_{\varepsilon\rightarrow0}|\overline{w}_{\varepsilon}|^{2_s^{\ast}}\,{\rm d}z=\liminf_{\varepsilon\rightarrow0}\int_{\mathbb{R}^3}|\widehat{v}_{\varepsilon}|^{2_s^{\ast}}\,{\rm d}z\\
&=\liminf_{\varepsilon\rightarrow0}\int_{\mathbb{R}^3}|\widehat{u}_{\varepsilon}|^{2_s^{\ast}}\,{\rm d}z-\int_{\mathbb{R}^3}|\widehat{u}|^{2_s^{\ast}}\,{\rm d}z\\
&\leq\liminf_{\varepsilon\rightarrow0}\int_{\mathbb{R}^3}|u_{\varepsilon}|^{2_s^{\ast}}\,{\rm d}z,
\end{align*}
but by \eqref{equ4-2}, we get
\begin{align*}
\int_{\mathbb{R}^3}|u_{\varepsilon}|^{2_s^{\ast}}\,{\rm d}z&\leq Cd_0+\int_{\mathbb{R}^3}|\varphi(\varepsilon z-x_{\varepsilon})W_0(z-\frac{x_{\varepsilon}}{\varepsilon})|^{2_s^{\ast}}\,{\rm d}z\\
&=Cd_0+\int_{B_{2\beta/\varepsilon}(\frac{x_{\varepsilon}}{\varepsilon})}|\varphi W_0|^{2_s^{\ast}}\,{\rm d}z.
\end{align*}
Thus,
\begin{equation}\label{equ4-20}
\int_{\mathbb{R}^3}|\overline{w}|^{2_s^{\ast}}\,{\rm d}z\leq Cd_0+\int_{\mathbb{R}^3}|W_0|^{2_s^{\ast}}\,{\rm d}z.
\end{equation}

On the other hand, by \eqref{equ4-2}, we have
\begin{align*}
\int_{\mathbb{R}^3}|D_su_{\varepsilon}|^2\,{\rm d}z&\leq cd_0+\int_{\mathbb{R}^3}\Big|D_s\Big(\varphi(\varepsilon z-x_{\varepsilon}) W_0(z-\frac{x_{\varepsilon}}{\varepsilon})\Big)\Big|^2\,{\rm d}z\\
&\leq cd_0+\int_{\mathbb{R}^3}|D_sW_0|^2\,{\rm d}z+o(1).
\end{align*}

\begin{align*}
\int_{\mathbb{R}^3}|D_s\overline{w}|^2\,{\rm d}z&\leq\liminf_{\varepsilon\rightarrow0}\int_{\mathbb{R}^3}|D_s\overline{w}_{\varepsilon}|^2\,{\rm d}z=\liminf_{\varepsilon\rightarrow0}\int_{\mathbb{R}^3}|D_s\widehat{v}_{\varepsilon}|^2\,{\rm d}z\\
&=\liminf_{\varepsilon\rightarrow0}\int_{\mathbb{R}^3}|D_s\widehat{u}_{\varepsilon}|^2\,{\rm d}z-\int_{\mathbb{R}^3}|D_s\widehat{u}|^2\,{\rm d}z\leq\liminf_{\varepsilon\rightarrow0}\int_{\mathbb{R}^3}|D_s\widehat{u}_{\varepsilon}|^2\,{\rm d}z,
\end{align*}
hence, we get
\begin{equation}\label{equ4-21}
\int_{\mathbb{R}^3}|D_s\overline{w}|^2\,{\rm d}z\leq Cd_0+\int_{\mathbb{R}^3}|D_sW_0|^2\,{\rm d}z+o(1).
\end{equation}

From the $W_0\in\mathcal{L}_{V_0}$, and by \eqref{equ4-20}, \eqref{equ4-21}, we have that
\begin{align*}
c_{V_0}&=\mathcal{I}_{V_0}(W_0)-\frac{1}{q(s+t)-3}\mathcal{G}_{\mu}(W_0)>\frac{(q-4)s+(q-2)t}{2(q(s+t)-3)}\int_{\mathbb{R}^3}|D_s W_0|^2\,{\rm d}z\\
&+\frac{(2_s^{\ast}-q)(s+t)}{2_s^{\ast}(q(s+t)-3)}\int_{\mathbb{R}^3}|W_0|^{2_s^{\ast}}\,{\rm d}z\\
&>\frac{(q-4)s+(q-2)t}{2(q(s+t)-3)}\int_{\mathbb{R}^3}|D_s\overline{w}|^2\,{\rm d}z+\frac{(2_s^{\ast}-q)(s+t)}{2_s^{\ast}(q(s+t)-3)}\int_{\mathbb{R}^3}|\overline{w}|^{2_s^{\ast}}\,{\rm d}z-Cd_0\\
&>\frac{s}{3}\mathcal{S}_s^{\frac{3}{2s}}-Cd_0,
\end{align*}
where we have used the fact that $\frac{(q-4)s+(q-2)t}{2(q(s+t)-3)}+\frac{(2_s^{\ast}-q)(s+t)}{2_s^{\ast}(q(s+t)-3)}=\frac{s}{3}$. For $d_0$ sufficient small, we get a contradiction with Lemma \ref{lem3-4}.

$\bullet$ $\{\widehat{z}_{\varepsilon}\}$ is unbounded. Without loss of generality, we may assume that $\lim\limits_{\varepsilon\rightarrow0}|\widehat{z}_{\varepsilon}|=+\infty$. Then, by \eqref{equ4-15}, we have that
\begin{equation}\label{equ4-22}
\liminf_{\varepsilon\rightarrow0}\int_{B_1(\widehat{z}_{\varepsilon})}|\widehat{u}_{\varepsilon}|^{2_s^{\ast}}\,{\rm d}z\geq r>0,
\end{equation}
i.e.,
\begin{equation*}
\liminf_{\varepsilon\rightarrow0}\int_{B_1(\widehat{z}_{\varepsilon})}|\varphi(\varepsilon z)u_{\varepsilon}(z+\frac{x_{\varepsilon}}{\varepsilon})|^{2_s^{\ast}}\,{\rm d}z\geq r>0.
\end{equation*}
Since $\varphi(z)=0$ for $|z|\geq2\beta$, so $|\widehat{z}_{\varepsilon}|\leq\frac{3\beta}{\varepsilon}$ for $\varepsilon$ small. If $|\widehat{z}_{\varepsilon}|\geq\frac{\beta}{2\varepsilon}$, then $\widehat{z}_{\varepsilon}\in B_{3\beta/\varepsilon}(0)\backslash B_{\beta/2\varepsilon}(0)$, and by \eqref{equ4-3}, we get
\begin{align*}
\liminf_{\varepsilon\rightarrow0}\int_{B_1(\widehat{z}_{\varepsilon})}|\widehat{u}_{\varepsilon}|^{2_s^{\ast}}\,{\rm d}z&\leq\liminf_{\varepsilon\rightarrow0}\sup_{y\in B_{3\beta/\varepsilon}(0)\backslash B_{\beta/2\varepsilon}(0)}\int_{B_1(y)}|u_{\varepsilon}(z+\frac{x_{\varepsilon}}{\varepsilon})|^{2_s^{\ast}}\,{\rm d}z\\
&\leq\liminf_{\varepsilon\rightarrow0}\sup_{y\in\mathcal{T}_{\varepsilon}}\int_{B_1(y)}|\widehat{u}_{\varepsilon}|^{2_s^{\ast}}\,{\rm d}z=0
\end{align*}
which contradicts with \eqref{equ4-22}. Thus $|\widehat{z}_{\varepsilon}|\leq\frac{\beta}{2\varepsilon}$ for $\varepsilon>0$ small. Without loss of generality, we may assume that $\varepsilon\widehat{z}_{\varepsilon}\rightarrow z_0\in \overline{B_{\beta/2}(0)}$ and $\widetilde{u}_{\varepsilon}\rightharpoonup\widetilde{u}$ in $H^s(\mathbb{R}^3)$, where $\widetilde{u}_{\varepsilon}(z):=\widehat{u}_{\varepsilon}(z+\widehat{z}_{\varepsilon})$. If $\widetilde{u}\neq0$, it is easy to check that $\widetilde{u}$ satisfies that
\begin{equation*}
(-\Delta)^s v+V(x_0+z_0) v+\phi_{v}^tv=f(v)+v^{2_s^{\ast}-1}\quad \text{in}\,\,\mathbb{R}^3.
\end{equation*}
Similarly as in the proof of the case $v\neq0$ of the claim \eqref{equ4-3}, we can get a contradiction for $d_0$ sufficient small. Thus $\widetilde{u}=0$. Similarly as the proof of the case $u=0$ of the claim \eqref{equ4-3} (where using Proposition \eqref{pro2-3}), we find that there exist $\widetilde{x}_{\varepsilon}\in\mathbb{R}^3$ and $\widetilde{\sigma}_{\varepsilon}>0$ such that $\widetilde{x}_{\varepsilon}\rightarrow \widetilde{x}\in \overline{B_1(0)}$, $\widetilde{\sigma}_{\varepsilon}\rightarrow0$ and
\begin{equation*}
u_{\varepsilon}^{\ast}(\cdot):=\widetilde{\sigma}_{\varepsilon}^{\frac{3-2s}{2}}\widetilde{u}_{\varepsilon}(\widetilde{\sigma}_{\varepsilon} \cdot+\widetilde{x}_{\varepsilon})\rightharpoonup u^{\ast}\quad \text{in}\,\,\mathcal{D}^{s,2}(\mathbb{R}^3),
\end{equation*}
where $u^{\ast}$ is a nontrivial of solution of \eqref{equ4-5} and satisfies \eqref{equ4-6}.
Thus, there exists $R>0$ such that
\begin{equation*}
\int_{B_R(0)}|u^{\ast}|^{2_s^{\ast}}\,{\rm d}z\geq\frac{1}{2}\int_{\mathbb{R}^3}|u^{\ast}|^{2_s^{\ast}}\,{\rm d}z=\frac{1}{2}\mathcal{S}_s^{\frac{3}{2s}}>0.
\end{equation*}
On the other hand, we have that
\begin{align*}
&\int_{B_R(0)}|u^{\ast}|^{2_s^{\ast}}\,{\rm d}z\leq\liminf_{\varepsilon\rightarrow0}\int_{B_R(0)}|u_{\varepsilon}^{\ast}|^{2_s^{\ast}}\,{\rm d}z=\liminf_{\varepsilon\rightarrow0}\int_{B_{\widetilde{\sigma}_{\varepsilon}R}(\widetilde{x}_{\varepsilon})}|\widetilde{u}_{\varepsilon}|^{2_s^{\ast}}\,{\rm d}z\\
&=\liminf_{\varepsilon\rightarrow0}\int_{B_{\widetilde{\sigma}_{\varepsilon}R}(\widetilde{x}_{\varepsilon}+\widehat{z}_{\varepsilon}+\frac{x_{\varepsilon}}{\varepsilon})}|u_{\varepsilon}|^{2_s^{\ast}}\,{\rm d}z\leq\liminf_{\varepsilon\rightarrow0}\int_{B_2(\widehat{z}_{\varepsilon}+\frac{x_{\varepsilon}}{\varepsilon})}|u_{\varepsilon}|^{2_s^{\ast}}\,{\rm d}z.
\end{align*}
which contradicts to \eqref{equ4-2} for $d_0$ small enough. Hence, the claim \eqref{equ4-14} holds and so using the interpolation inequality, we deduce that
\begin{equation}\label{equ4-23}
\widehat{u}_{\varepsilon}\rightarrow \widehat{u}\quad \text{in}\,\, L^p(\mathbb{R}^3), \,\, p\in(2,2_s^{\ast}].
\end{equation}

By \eqref{equ4-13}, recalling that $\widehat{u}_{\varepsilon}(z)=u_{\varepsilon,1}(z+\frac{x_{\varepsilon}}{\varepsilon})$, we have
\begin{align*}
P_{\varepsilon}(\widehat{u}_{\varepsilon})\leq c_{V_0}+o(1).
\end{align*}
Letting $\varepsilon\rightarrow0$, and using \eqref{equ4-23}, $(V_0)$, we get
\begin{align*}
\mathcal{I}_{V(x_0)}(\widehat{u})\leq c_{V_0}.
\end{align*}
On the other hand, in view of $\langle\mathcal{J}'_{\varepsilon}(u_{\varepsilon}),u_{\varepsilon,1}\rangle\rightarrow0$ and \eqref{equ4-13-0}, and $\langle Q_{\varepsilon}'(u_{\varepsilon}),u_{\varepsilon,1}\rangle=0$, we deduce that
\begin{align*}
\int_{\mathbb{R}^3}|D_s\widehat{u}_{\varepsilon}|^2\,{\rm d}z&+\int_{\mathbb{R}^3}V(\varepsilon z+x_{\varepsilon})|\widehat{u}_{\varepsilon}|^2\,{\rm d}z+\int_{\mathbb{R}^3}\phi_{\widehat{u}_{\varepsilon}}^t|\widehat{u}_{\varepsilon}|^2\,{\rm d}z\\
&=\int_{\mathbb{R}^3}g(\varepsilon z+x_{\varepsilon},\widehat{u}_{\varepsilon})\widehat{u}_{\varepsilon}\,{\rm d}z+o(1),
\end{align*}
then by Fatou's Lemma, \eqref{equ4-23} and \eqref{equ4-18}, we have that
\begin{align*}
&\int_{\mathbb{R}^3}|D_s\widehat{u}|^2\,{\rm d}z+\int_{\mathbb{R}^3}V(x_0)|\widehat{u}|^2\,{\rm d}z+\int_{\mathbb{R}^3}\phi_{\widehat{u}}^t|\widehat{u}|^2\,{\rm d}z\\
&\leq\liminf_{\varepsilon\rightarrow0}\Big(\int_{\mathbb{R}^3}|D_s\widehat{u}_{\varepsilon}|^2\,{\rm d}z+\int_{\mathbb{R}^3}V(\varepsilon z+x_{\varepsilon})|\widehat{u}_{\varepsilon}|^2\,{\rm d}z+\int_{\mathbb{R}^3}\phi_{\widehat{u}_{\varepsilon}}^t|\widehat{u}_{\varepsilon}|^2\,{\rm d}z\Big)\\
&=\liminf_{\varepsilon\rightarrow0}\Big(\int_{\mathbb{R}^3}g(\varepsilon z+x_{\varepsilon},\widehat{u}_{\varepsilon})\widehat{u}_{\varepsilon}\,{\rm d}z\Big)=\int_{\mathbb{R}^3}f(\widehat{u})\widehat{u}\,{\rm d}z+\int_{\mathbb{R}^3}(\widehat{u}^{+})^{2_s^{\ast}}\,{\rm d}z\\
&=\int_{\mathbb{R}^3}|D_s\widehat{u}|^2\,{\rm d}z+\int_{\mathbb{R}^3}V(x_0)|\widehat{u}|^2\,{\rm d}z+\int_{\mathbb{R}^3}\phi_{\widehat{u}}^t|\widehat{u}|^2\,{\rm d}z,
\end{align*}
which implies that
\begin{equation*}
\int_{\mathbb{R}^3}|D_s\widehat{u}_{\varepsilon}|^2\,{\rm d}z\rightarrow\int_{\mathbb{R}^3}|D_s\widehat{u}|^2\,{\rm d}z,
\end{equation*}
and
\begin{equation*}
\int_{\mathbb{R}^3}V(\varepsilon z+x_{\varepsilon})|\widehat{u}_{\varepsilon}|^2\,{\rm d}z\rightarrow\int_{\mathbb{R}^3}V(x_0)|\widehat{u}|^2\,{\rm d}z.
\end{equation*}
Hence, by $(V_0)$, we can deduce that
\begin{equation}\label{equ4-24}
\widehat{u}_{\varepsilon}\rightarrow\widehat{u}\quad  \text{in}\,\, H^s(\mathbb{R}^3).
\end{equation}
By \eqref{equ4-2}, \eqref{equ4-23}, it is easy to check that $\widehat{u}\neq0$. It follows from \eqref{equ4-18} that $\mathcal{I}_{V(x_0)}(\widehat{u})\geq c_{V(x_0)}$. Hence, $\mathcal{I}_{V(x_0)}(\widehat{u})= c_{V(x_0)}$ is proved. In view of $x_0\in\mathcal{M}^{\beta}\subset\Lambda$, we have that $V(x_0)=V_0$ and $x_0\in\mathcal{M}$. As a consequence, $\widehat{u}$ is, up to a translation in the $x-$variable, an element of $\mathcal{L}_{V_0}$, namely there exists $W\in\mathcal{L}_{V_0}$ and $z_0\in\mathbb{R}^3$ such that $\widehat{u}(z)=W(z-z_0)$. Consequently, from \eqref{equ4-2}, \eqref{equ4-13-0} and \eqref{equ4-24}, we have that
\begin{equation*}
\|u_{\varepsilon}-\varphi_{\varepsilon}(\cdot-\frac{x_{\varepsilon}}{\varepsilon}-z_0)W(\cdot-\frac{x_{\varepsilon}}{\varepsilon}-z_0)\|_{H_{\varepsilon}}\rightarrow0\quad \text{as}\,\, \varepsilon\rightarrow0.
\end{equation*}
Observing that $\varepsilon(\frac{x_{\varepsilon}}{\varepsilon}+z_0)\rightarrow x_0\in\mathcal{M}$ as $\varepsilon\rightarrow0$, so the proof is completed.
\end{proof}

For $a\in\mathbb{R}$ we define the sublevel set of $\mathcal{J}_{\varepsilon}$ as follows
\begin{equation*}
\mathcal{J}_{\varepsilon}^a=\{u\in H_{\varepsilon}\,\,\Big|\,\, \mathcal{J}_{\varepsilon}(u)\leq a\}.
\end{equation*}

We observe that the result of Lemma \ref{lem4-1} holds for $d_0>0$ sufficiently small independently of the sequences satisfying the assumptions.
\begin{lemma}\label{lem4-2}
Let $d_0$ be the number given in Lemma \ref{lem4-1}. Then for any $d\in(0,d_0)$, there exist positive constants $\varepsilon_d>0$, $\rho_d>0$ and $\alpha_d>0$ such that
\begin{equation*}
\|\mathcal{J}'_{\varepsilon}(u)\|_{(H_{\varepsilon})'}\geq\alpha_d>0\quad \text{for every}\,\, u\in\mathcal{J}_{\varepsilon}^{c_{V_0}+\rho_d}\cap(\mathcal{N}_{\varepsilon}^{d_0}\backslash\mathcal{N}_{\varepsilon}^d)\,\,\text{and}\,\,\varepsilon\in(0,\varepsilon_d).
\end{equation*}
\end{lemma}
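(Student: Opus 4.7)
The plan is to argue by contradiction, relying crucially on the compactness/deformation result of Lemma \ref{lem4-1}. Suppose the statement fails. Then there exist $d \in (0,d_0)$ and sequences $\varepsilon_n \to 0^{+}$, $\rho_n \to 0^{+}$, and $u_n \in \mathcal{J}_{\varepsilon_n}^{c_{V_0}+\rho_n} \cap (\mathcal{N}_{\varepsilon_n}^{d_0} \setminus \mathcal{N}_{\varepsilon_n}^{d})$ such that
\begin{equation*}
\|\mathcal{J}'_{\varepsilon_n}(u_n)\|_{(H_{\varepsilon_n})'} \longrightarrow 0 \quad \text{as } n \to \infty.
\end{equation*}

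First I would verify that this sequence $\{u_n\}$ fits exactly the framework of Lemma \ref{lem4-1}. By construction $u_n \in \mathcal{N}_{\varepsilon_n}^{d_0}$. The sublevel-set assumption gives $\mathcal{J}_{\varepsilon_n}(u_n) \leq c_{V_0} + \rho_n$, so $\limsup_{n \to \infty} \mathcal{J}_{\varepsilon_n}(u_n) \leq c_{V_0}$. Finally the contradiction assumption yields $\mathcal{J}'_{\varepsilon_n}(u_n) \to 0$ in $(H_{\varepsilon_n})'$. Thus all hypotheses of Lemma \ref{lem4-1} are satisfied.

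Applying Lemma \ref{lem4-1}, I extract (up to a subsequence) a sequence $\{x_n\} \subset \mathbb{R}^3$, a point $x_0 \in \mathcal{M}$, and a ground state $W \in \mathcal{L}_{V_0}$ such that $\varepsilon_n x_n \to x_0$ and
\begin{equation*}
\|u_n - \varphi_{\varepsilon_n}(\cdot - x_n) W(\cdot - x_n)\|_{H_{\varepsilon_n}} \longrightarrow 0.
\end{equation*}
Since $\varepsilon_n x_n \to x_0 \in \mathcal{M}$, for $n$ large we have $\varepsilon_n x_n \in \mathcal{M}^{\beta}$, so the function $\varphi_{\varepsilon_n}(\cdot - x_n) W(\cdot - x_n)$ is (by definition) an element of $\mathcal{N}_{\varepsilon_n}$. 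The displayed convergence therefore gives
\begin{equation*}
\mathrm{dist}_{H_{\varepsilon_n}}(u_n, \mathcal{N}_{\varepsilon_n}) \leq \|u_n - \varphi_{\varepsilon_n}(\cdot - x_n) W(\cdot - x_n)\|_{H_{\varepsilon_n}} < d
\end{equation*}
for all sufficiently large $n$. Hence $u_n \in \mathcal{N}_{\varepsilon_n}^{d}$ for large $n$, directly contradicting the choice $u_n \in \mathcal{N}_{\varepsilon_n}^{d_0}\setminus \mathcal{N}_{\varepsilon_n}^{d}$. This contradiction proves the lemma.

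The only subtlety, and really the only work in this proof, is checking that all three hypotheses of Lemma \ref{lem4-1} are simultaneously verified along a common subsequence; once that is in place, the argument reduces to the trivial observation that an approximating function from $\mathcal{N}_{\varepsilon_n}$ in the $H_{\varepsilon_n}$-sense cannot coexist with the separation condition $u_n \notin \mathcal{N}_{\varepsilon_n}^{d}$. I do not expect a real obstacle here: the heavy analytic work (concentration-compactness, profile decomposition, control of the Brezis-Lieb splitting, and the estimate $c_\mu < \frac{s}{3}\mathcal{S}_s^{3/(2s)}$) has already been carried out in Lemma \ref{lem4-1} and Lemma \ref{lem3-4}, and Lemma \ref{lem4-2} is their immediate ``gradient does not vanish on an annulus around $\mathcal{N}_\varepsilon$'' corollary.
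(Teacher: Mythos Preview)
Your proof is correct and is precisely the standard contradiction argument that the paper leaves implicit (the paper states Lemma~\ref{lem4-2} without proof, merely observing beforehand that Lemma~\ref{lem4-1} holds uniformly in the sequences). The only minor point worth noting is that in the negation you should make explicit that you choose, say, $\varepsilon_d=\rho_d=\alpha_d=1/n$ to manufacture the sequences with $\varepsilon_n\to 0$, $\rho_n\to 0$, $\|\mathcal{J}'_{\varepsilon_n}(u_n)\|\to 0$; once that is done, your application of Lemma~\ref{lem4-1} and the observation $\varepsilon_n x_n\to x_0\in\mathcal{M}\subset\mathcal{M}^\beta$ (so $\varphi_{\varepsilon_n}(\cdot-x_n)W(\cdot-x_n)\in\mathcal{N}_{\varepsilon_n}$) yield the contradiction exactly as you wrote.
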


We recall the definition \eqref{equ4-1} of $\gamma_{\varepsilon}(\tau)$. The following Lemma holds.

\begin{lemma}\label{lem4-3}
There exists $M_0>0$ such that for any $\delta>0$ small, there exists $\alpha_{\delta}>0$ and $\varepsilon_{\delta}>0$ such that if $\mathcal{J}_{\varepsilon}(\gamma_{\varepsilon}(\tau))\geq c_{V_0}-\alpha_{\delta}$ and $\varepsilon\in(0,\varepsilon_{\delta})$, then $\gamma_{\varepsilon}(\tau)\in\mathcal{N}_{\varepsilon}^{M_0\delta}$.
\end{lemma}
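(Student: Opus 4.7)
The plan is to exploit two complementary ingredients: a uniform non-degeneracy of the fibre maps $\tau\mapsto\mathcal{I}_{V_0}(W^*_\tau)$ near their common maximizer $\tau=1$, and a uniform Lipschitz dependence of the deformed path $\tau\mapsto W_{\varepsilon,\tau}$ in the $H_\varepsilon$-norm. For each $W\in\mathcal{L}_{V_0}$, set $\Phi_W(\tau):=\mathcal{I}_{V_0}(W^*_\tau)$ with $W^*_\tau(z)=\tau^{s+t}W(\tau z)$. By Proposition~3.2, $\Phi_W$ attains its strict global maximum $c_{V_0}$ at $\tau=1$. The compactness of $\mathcal{L}_{V_0}$ in $H^s(\mathbb{R}^3)$ (Proposition~3.4(i)) promotes this to a uniform statement: for every $\eta>0$ there exists $\omega(\eta)>0$ such that
\[
\Phi_W(\tau)\ge c_{V_0}-\omega(\eta)\ \Longrightarrow\ |\tau-1|\le\eta,
\]
for all $W\in\mathcal{L}_{V_0}$ and $\tau\in[0,\tau_0]$. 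Indeed, if this failed there would be sequences $W_n\to W_\infty$ in $H^s$ and $\tau_n\to\tau_\infty$ with $|\tau_\infty-1|\ge\eta$ but $\Phi_{W_\infty}(\tau_\infty)=c_{V_0}$, contradicting uniqueness of the maximizer.

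Next, I would establish the Lipschitz bound
\[
\|W_{\varepsilon,\tau_1}-W_{\varepsilon,\tau_2}\|_{H_\varepsilon}\le L|\tau_1-\tau_2|
\]
valid for all $\tau_1,\tau_2$ in a fixed neighbourhood $[1-\eta_0,1+\eta_0]$ of $1$, uniformly in $W\in\mathcal{L}_{V_0}$, $y\in\mathcal{M}^\beta$, and all $\varepsilon$ sufficiently small. The path $W_{\varepsilon,\tau}$ is (up to translation by $y/\varepsilon$) a scaled cut-off of $W$, and differentiating in $\tau$ one needs to control $\|\partial_\tau W_{\varepsilon,\tau}\|_{H_\varepsilon}$ in terms of $\|(s+t)W+x\cdot\nabla W\|_{H^s}$ plus a remainder coming from the derivative of $\varphi_\varepsilon$. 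Here the pointwise decay $W(x)\le C(1+|x|)^{-(3+2s)}$ from Proposition~3.4(ii), combined with the $\varepsilon^{-1}$ scale of $\varphi_\varepsilon$, makes the remainder tend to $0$ and allows the Lipschitz constant $L$ to be taken uniform over the compact set $\mathcal{L}_{V_0}\times\mathcal{M}^\beta$.

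With these two ingredients in hand, I fix $M_0:=L\tau_0$. Given $\delta>0$ with $\delta\tau_0\le\eta_0$, choose $\alpha_\delta:=\tfrac{1}{2}\omega(\delta\tau_0)$ and, using (4.1), select $\varepsilon_\delta>0$ so small that $\sup_{\tau\in[0,\tau_0]}|\mathcal{J}_\varepsilon(W_{\varepsilon,\tau})-\Phi_W(\tau)|<\alpha_\delta$ for every $W\in\mathcal{L}_{V_0}$ and $\varepsilon\in(0,\varepsilon_\delta)$ (uniform smallness again follows by compactness). Then $\mathcal{J}_\varepsilon(\gamma_\varepsilon(\tau))\ge c_{V_0}-\alpha_\delta$ gives $\Phi_W(\tau\tau_0)\ge c_{V_0}-\omega(\delta\tau_0)$, hence $|\tau\tau_0-1|\le\delta\tau_0$; the Lipschitz estimate yields $\|\gamma_\varepsilon(\tau)-W_{\varepsilon,1}\|_{H_\varepsilon}\le L\delta\tau_0=M_0\delta$, and since $W_{\varepsilon,1}=W_\varepsilon^y\in\mathcal{N}_\varepsilon$ we conclude $\gamma_\varepsilon(\tau)\in\mathcal{N}_\varepsilon^{M_0\delta}$. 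The delicate step I expect to spend the most effort on is the $\varepsilon$-uniform Lipschitz bound: the cut-off $\varphi_\varepsilon$ does not commute with the $\tau$-rescaling, so one must control the fractional Gagliardo seminorm of the difference carefully on the transition annulus, and the decay of $W$ is essential to close the estimate independently of $\varepsilon$.
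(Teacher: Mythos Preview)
The paper does not actually prove Lemma~4.3: it is simply stated and then used, with the implicit understanding that it follows the pattern of the corresponding lemmas in \cite{Teng4} and \cite{HL} (Byeon--Jeanjean type arguments). So there is no ``paper's own proof'' to compare against; what can be said is that your outline follows exactly the standard strategy those references use, namely (i) near its maximum the fibre energy $\tau\mapsto\mathcal{I}_{V_0}(W^*_\tau)$ is strictly below $c_{V_0}$ away from $\tau=1$, (ii) the approximation $\mathcal{J}_\varepsilon(W_{\varepsilon,\tau})\to\mathcal{I}_{V_0}(W^*_\tau)$ uniformly in $\tau$, and (iii) continuity of the path $\tau\mapsto W_{\varepsilon,\tau}$ in $H_\varepsilon$. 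Your proposal is therefore on the right track and would be accepted as a proof of the lemma.

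Two small comments. First, the uniformity over $W\in\mathcal{L}_{V_0}$ is unnecessary here: in this paper the path $\gamma_\varepsilon$ is built from a \emph{single fixed} $W^*\in\mathcal{L}_{V_0}$ (see the definition of $\mathcal{D}_\varepsilon$ just before \eqref{equ4-1}), so the compactness argument, while harmless, is overkill. Second, the Lipschitz step is the one genuinely delicate point you identify, and your plan to control $\|(s+t)W+x\cdot\nabla W\|_{H^s}$ implicitly requires $x\cdot\nabla W\in H^s(\mathbb{R}^3)$. This does hold, but it needs a decay estimate for $\nabla W$ (not just for $W$) which is not stated in Proposition~3.4; one obtains it by differentiating the autonomous limit equation and bootstrapping the $C^{2,\alpha}$ regularity from Proposition~3.3 together with the polynomial decay. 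Alternatively, since the lemma is only ever applied with a single fixed target radius $d_0/4$, you could bypass the Lipschitz bound entirely and argue by mere continuity of $\tau\mapsto W^*_\tau$ in $H^s$, which already suffices for the statement (the linear $M_0\delta$ form is a convenience, not a necessity).
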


We are now ready to show that the penalized functional $\mathcal{J}_{\varepsilon}$ possesses a critical point for every $\varepsilon>0$ sufficiently small. Choose $\delta_1>0$ such that $M_0\delta_1<\frac{d_0}{4}$ in Lemma \ref{lem4-3}, and fixing $d=\frac{d_0}{4}:=d_1$ in Lemma \ref{lem4-2}. Similar to the proof of Lemma 4.6 in \cite{HL}, we can prove the following result.
\begin{lemma}\label{lem4-4}
There exists $\overline{\varepsilon}>0$ such that for each $\varepsilon\in(0,\overline{\varepsilon})$, there exists a sequence $\{u_{\varepsilon,n}\}\subset \mathcal{J}_{\varepsilon}^{\widetilde{\mathcal{C}}_{\varepsilon}+\varepsilon}\cap\mathcal{N}_{\varepsilon}^{d_0}$ such that $\mathcal{J}'_{\varepsilon}(u_{\varepsilon,n})\rightarrow0$ in $(H_{\varepsilon})'$ as $n\rightarrow\infty$.
\end{lemma}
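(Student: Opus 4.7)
The plan is to establish this by a standard constrained minimax together with a deformation argument in the spirit of Byeon--Wang and del Pino--Felmer. First I would make precise the level $\widetilde{\mathcal{C}}_{\varepsilon}$ by introducing the restricted class
\begin{equation*}
\widetilde{\Gamma}_{\varepsilon}=\Big\{\gamma\in\mathcal{A}_{\varepsilon}\,\Big|\,\gamma(\tau)\in\mathcal{N}_{\varepsilon}^{d_0/2}\ \text{whenever}\ \mathcal{J}_{\varepsilon}(\gamma(\tau))\geq c_{V_0}-\alpha_{\delta_1}\Big\},
\end{equation*}
and set $\widetilde{\mathcal{C}}_{\varepsilon}=\inf_{\gamma\in\widetilde{\Gamma}_{\varepsilon}}\max_{\tau\in[0,1]}\mathcal{J}_{\varepsilon}(\gamma(\tau))$. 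By Lemma~\ref{lem4-3} and \eqref{equ4-1}, for $\varepsilon$ small the canonical path $\gamma_{\varepsilon}$ lies in $\widetilde{\Gamma}_{\varepsilon}$ (since $M_0\delta_1<d_0/4<d_0/2$), and $\max_\tau \mathcal{J}_\varepsilon(\gamma_\varepsilon(\tau))\to c_{V_0}$. Combined with the obvious bound $\widetilde{\mathcal{C}}_{\varepsilon}\geq \mathcal{C}_{\varepsilon}$ this yields $\widetilde{\mathcal{C}}_{\varepsilon}\to c_{V_0}$ as $\varepsilon\to 0$.

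Next I would argue by contradiction. Suppose the lemma fails for some sequence $\varepsilon_n\to 0$: then there exist $\alpha, r>0$ so that
\begin{equation*}
\|\mathcal{J}'_{\varepsilon_n}(u)\|_{(H_{\varepsilon_n})'}\geq\alpha\quad \text{for all}\ u\in\mathcal{N}_{\varepsilon_n}^{d_0}\cap \mathcal{J}_{\varepsilon_n}^{\widetilde{\mathcal{C}}_{\varepsilon_n}+r}\setminus\mathcal{J}_{\varepsilon_n}^{\widetilde{\mathcal{C}}_{\varepsilon_n}-r}.
\end{equation*}
By Lemma~\ref{lem4-2} with $d=d_1=d_0/4$ we also have $\|\mathcal{J}'_{\varepsilon_n}\|\geq\alpha_{d_1}$ on the annulus $\mathcal{N}_{\varepsilon_n}^{d_0}\setminus\mathcal{N}_{\varepsilon_n}^{d_1}$ at the same energy level (shrinking $r$ if needed, $r\leq\rho_{d_1}$). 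Using these bounds, I would build a locally Lipschitz pseudo-gradient vector field $\mathcal{V}$ on the open set where the gradient estimate applies, and truncate it by a Lipschitz cut-off $\eta:H_{\varepsilon_n}\to[0,1]$ with $\eta\equiv 1$ on $\mathcal{N}_{\varepsilon_n}^{3d_0/4}\cap \mathcal{J}_{\varepsilon_n}^{[\widetilde{\mathcal{C}}_{\varepsilon_n}-r/2,\widetilde{\mathcal{C}}_{\varepsilon_n}+r/2]}$ and $\eta\equiv 0$ outside $\mathcal{N}_{\varepsilon_n}^{d_0}\cap \mathcal{J}_{\varepsilon_n}^{[\widetilde{\mathcal{C}}_{\varepsilon_n}-r,\widetilde{\mathcal{C}}_{\varepsilon_n}+r]}$. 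Solving the flow $\dot\sigma=-\eta(\sigma)\mathcal{V}(\sigma)$ produces a deformation $\sigma_t$ that fixes the endpoints $0$ and $U_{\varepsilon_n,\tau_0}$ (whose energies are far from $\widetilde{\mathcal{C}}_{\varepsilon_n}$), keeps orbits inside $\mathcal{N}_{\varepsilon_n}^{d_0}$, and decreases $\mathcal{J}_{\varepsilon_n}$ at a uniform rate on the active region.

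Choose $\gamma\in\widetilde{\Gamma}_{\varepsilon_n}$ with $\max_\tau \mathcal{J}_{\varepsilon_n}(\gamma(\tau))<\widetilde{\mathcal{C}}_{\varepsilon_n}+\tfrac12\min(r,\alpha_{\delta_1})$. For $T>0$ chosen so that $T\alpha\cdot\text{(pseudo-gradient constant)}>r$, the new path $\widetilde{\gamma}=\sigma_T\circ\gamma$ still joins $0$ to $U_{\varepsilon_n,\tau_0}$; moreover any $\tau$ with $\mathcal{J}_{\varepsilon_n}(\widetilde{\gamma}(\tau))\geq c_{V_0}-\alpha_{\delta_1}$ satisfies, before the deformation, $\mathcal{J}_{\varepsilon_n}(\gamma(\tau))\geq c_{V_0}-\alpha_{\delta_1}$ as well, so by Lemma~\ref{lem4-3} $\gamma(\tau)\in\mathcal{N}_{\varepsilon_n}^{M_0\delta_1}\subset\mathcal{N}_{\varepsilon_n}^{d_0/4}$, and along the flow it cannot escape $\mathcal{N}_{\varepsilon_n}^{d_0/2}$ because the cut-off kills $\mathcal{V}$ outside $\mathcal{N}_{\varepsilon_n}^{3d_0/4}$. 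Hence $\widetilde{\gamma}\in\widetilde{\Gamma}_{\varepsilon_n}$ with $\max_\tau \mathcal{J}_{\varepsilon_n}(\widetilde{\gamma}(\tau))<\widetilde{\mathcal{C}}_{\varepsilon_n}$, contradicting the definition of $\widetilde{\mathcal{C}}_{\varepsilon_n}$.

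The principal technical difficulty is exactly this \emph{invariance} check for the deformed path: one must verify simultaneously that the annular gradient estimate of Lemma~\ref{lem4-2} can be exploited without the flow exiting $\mathcal{N}_{\varepsilon_n}^{d_0}$, and that the defining property of $\widetilde{\Gamma}_{\varepsilon_n}$ is preserved. The choice $M_0\delta_1<d_0/4$ provides a buffer of width $d_0/4$ between the layer where Lemma~\ref{lem4-3} places the path initially and the outer wall $\partial\mathcal{N}_{\varepsilon_n}^{d_0/2}$, which is what allows the cut-off construction to close the argument.
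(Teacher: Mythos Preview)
Your proposal is correct and follows the same penalized minimax/deformation scheme \`a la Byeon--Wang that the paper invokes by citing Lemma~4.6 of \cite{HL} (the paper gives no details of its own). One small inconsistency: you say $\eta\equiv 0$ outside $\mathcal{N}_{\varepsilon_n}^{d_0}$ but later claim the cut-off kills the flow outside $\mathcal{N}_{\varepsilon_n}^{3d_0/4}$; the correct mechanism keeping $\widetilde{\gamma}$ inside $\mathcal{N}_{\varepsilon_n}^{d_0/2}$ is not that the flow vanishes there but rather that, by Lemma~\ref{lem4-2}, crossing the shell $\mathcal{N}_{\varepsilon_n}^{d_0/2}\setminus\mathcal{N}_{\varepsilon_n}^{d_0/4}$ forces an energy drop of order $\alpha_{d_1}\cdot d_0/4$, which (taking $r$ small) already pushes the value below $c_{V_0}-\alpha_{\delta_1}$, so the defining constraint of $\widetilde{\Gamma}_{\varepsilon_n}$ is vacuously satisfied at such $\tau$.
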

\begin{lemma}\label{lem4-5}
$\mathcal{J}_{\varepsilon}$ possesses a nontrivial critical point $u_{\varepsilon}\in\mathcal{N}_{\varepsilon}^{d_0}\cap\mathcal{J}_{\varepsilon}^{\mathcal{D}_{\varepsilon}+\varepsilon}$ for $\varepsilon\in(0,\bar{\varepsilon}]$.
\end{lemma}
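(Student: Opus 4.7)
The plan is to extract $u_{\varepsilon}$ as the weak limit of the Palais--Smale sequence produced by Lemma \ref{lem4-4} and to verify in turn that this limit is a critical point, lies in $\mathcal{N}_{\varepsilon}^{d_0}$, is nontrivial, and sits below the prescribed energy level. First I fix $\varepsilon \in (0,\overline{\varepsilon})$ and apply Lemma \ref{lem4-4} to obtain $\{u_{\varepsilon,n}\}_n \subset \mathcal{J}_{\varepsilon}^{\widetilde{\mathcal{C}}_{\varepsilon}+\varepsilon} \cap \mathcal{N}_{\varepsilon}^{d_0}$ with $\mathcal{J}'_{\varepsilon}(u_{\varepsilon,n}) \to 0$ in $(H_{\varepsilon})'$. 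Since $\mathcal{N}_{\varepsilon}$ is compact (hence uniformly bounded) in $H_{\varepsilon}$, the $d_0$-tube $\mathcal{N}_{\varepsilon}^{d_0}$ is bounded, so up to a subsequence $u_{\varepsilon,n} \rightharpoonup u_{\varepsilon}$ in $H_{\varepsilon}$, $u_{\varepsilon,n} \to u_{\varepsilon}$ in $L^p_{\mathrm{loc}}(\mathbb{R}^3)$ for $p \in [1,2_s^{\ast})$, and $u_{\varepsilon,n} \to u_{\varepsilon}$ a.e.

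Next I verify that $\mathcal{J}'_{\varepsilon}(u_{\varepsilon}) = 0$. For every $\varphi \in C_0^{\infty}(\mathbb{R}^3)$, the Gagliardo and $V(\varepsilon z)$ terms pass by weak convergence in $H_{\varepsilon}$; the Poisson term passes by Lemma \ref{lem2-1}(iv); the penalty $\langle Q'_{\varepsilon}(u_{\varepsilon,n}),\varphi\rangle$ passes because pairing against a compactly supported $\varphi$ reduces everything to continuity of $u \mapsto \bigl(\int_{\mathbb{R}^3\setminus\Lambda_{\varepsilon}} u^2 - \varepsilon\bigr)_{+}$ under weak convergence; and the subcritical contribution of $g(\varepsilon z,\cdot)$ passes by a.e. convergence combined with the growth bound in $(g_1)$--$(g_2)$. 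For the critical piece $(u_{\varepsilon,n}^{+})^{2_s^{\ast}-1}\chi_{\Lambda_{\varepsilon}}$, the boundedness in $L^{(2_s^{\ast})'}(\mathbb{R}^3)$ coupled with a.e. convergence produces weak convergence in $L^{(2_s^{\ast})'}$ to $(u_{\varepsilon}^{+})^{2_s^{\ast}-1}\chi_{\Lambda_{\varepsilon}}$, so the duality pairing against $\varphi \in L^{2_s^{\ast}}$ passes as well. Density of $C_0^{\infty}(\mathbb{R}^3)$ in $H_{\varepsilon}$ then delivers $\mathcal{J}'_{\varepsilon}(u_{\varepsilon}) = 0$.

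To place $u_{\varepsilon}$ in $\mathcal{N}_{\varepsilon}^{d_0}$, I choose $v_n \in \mathcal{N}_{\varepsilon}$ realizing the distance $\|u_{\varepsilon,n}-v_n\|_{H_{\varepsilon}} \le \tfrac{3}{2} d_0$; the compactness of $\mathcal{N}_{\varepsilon}$ yields a strongly convergent subsequence $v_n \to v \in \mathcal{N}_{\varepsilon}$, and weak lower semicontinuity of the norm together with the triangle inequality gives $\|u_{\varepsilon}-v\|_{H_{\varepsilon}} \le \liminf_{n} (\|u_{\varepsilon,n}-v_n\|_{H_{\varepsilon}}+\|v_n-v\|_{H_{\varepsilon}}) \le \tfrac{3}{2}d_0$, so $u_{\varepsilon} \in \mathcal{N}_{\varepsilon}^{d_0}$. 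Nontriviality follows because every element $W_{\varepsilon}^{y}$ of $\mathcal{N}_{\varepsilon}$ inherits a uniform positive lower bound on $\|W\|_{H^s}$ from the compactness statement of Proposition \ref{pro3-4}(i); shrinking $d_0$ if needed (as already allowed in Lemma \ref{lem4-1}) forces $\|u_{\varepsilon}\|_{H_{\varepsilon}} \ge \|v\|_{H_{\varepsilon}} - \tfrac{3}{2}d_0 > 0$. Finally, the energy bound $\mathcal{J}_{\varepsilon}(u_{\varepsilon}) \le \mathcal{D}_{\varepsilon}+\varepsilon$ is obtained from Fatou/weak lower semicontinuity of $\mathcal{J}_{\varepsilon}$ along the PS sequence, using the inequality $\widetilde{\mathcal{C}}_{\varepsilon} \le \mathcal{D}_{\varepsilon}$ guaranteed by the construction of the admissible path class $\mathcal{A}_{\varepsilon}$ around $\gamma_{\varepsilon}$.

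The main technical obstacle is the passage to the limit in the critical term and ruling out that mass could concentrate at Dirac deltas along $\{u_{\varepsilon,n}\}$. This is neutralized globally by the level bound $\widetilde{\mathcal{C}}_{\varepsilon}+\varepsilon < \tfrac{s}{3}\mathcal{S}_s^{3/2s}$ inherited from \eqref{equ4-1} and Lemma \ref{lem3-4}, below which Lemma \ref{lem2-2} forbids the formation of concentration points, so that the a.e.-plus-bounded-in-$L^{(2_s^{\ast})'}$ argument is indeed enough; it is also neutralized locally by the proximity of $u_{\varepsilon,n}$ to the compact set $\mathcal{N}_{\varepsilon}$, which pins the mass near $\mathcal{M}^{\beta}/\varepsilon$ and rules out vanishing or escape to infinity.
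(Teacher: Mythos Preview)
Your outline has two genuine gaps, and both stem from relying on weak convergence where the paper needs (and proves) strong convergence $u_{\varepsilon,n}\to u_{\varepsilon}$ in $H_{\varepsilon}$.

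First, the energy bound: $\mathcal{J}_{\varepsilon}$ is \emph{not} weakly lower semicontinuous. On $\Lambda_{\varepsilon}$ the functional contains $-\tfrac{1}{2_s^{\ast}}\int_{\Lambda_{\varepsilon}}(u^{+})^{2_s^{\ast}}$, and $u\mapsto \int_{\Lambda_{\varepsilon}}(u^{+})^{2_s^{\ast}}$ is only weakly lower semicontinuous, so the negative of it is weakly \emph{upper} semicontinuous. The gradient part $\tfrac{1}{2}\|u\|_{H_{\varepsilon}}^2$ goes the other way, and there is no reason the two defects cancel. Hence ``Fatou/weak lower semicontinuity of $\mathcal{J}_{\varepsilon}$'' does not give $\mathcal{J}_{\varepsilon}(u_{\varepsilon})\le \mathcal{D}_{\varepsilon}+\varepsilon$. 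Second, the penalty term: the scalar factor in $Q'_{\varepsilon}(u)$ is $\bigl(\int_{\mathbb{R}^3\setminus\Lambda_{\varepsilon}}u^2-\varepsilon\bigr)_{+}$, and $u\mapsto \int_{\mathbb{R}^3\setminus\Lambda_{\varepsilon}}u^2$ is \emph{not} weakly continuous on the unbounded exterior domain (only weakly l.s.c.). So from weak convergence alone you cannot conclude that the limit $\mu_{\varepsilon}$ of $\mu_{\varepsilon,n}=\bigl(\int_{\mathbb{R}^3\setminus\Lambda_{\varepsilon}}u_{\varepsilon,n}^2-\varepsilon\bigr)_{+}$ equals $\bigl(\int_{\mathbb{R}^3\setminus\Lambda_{\varepsilon}}u_{\varepsilon}^2-\varepsilon\bigr)_{+}$; without this identification, $u_{\varepsilon}$ solves an equation like \eqref{equ4-26} but is not yet a critical point of $\mathcal{J}_{\varepsilon}$.

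The paper closes both gaps by upgrading to strong convergence in $H_{\varepsilon}$. This takes two steps: (i) a tightness estimate \eqref{equ4-27}, obtained by testing $\mathcal{J}'_{\varepsilon}(u_{\varepsilon,n})$ against $\psi_{\rho}u_{\varepsilon,n}$ for a cutoff $\psi_{\rho}$ supported outside a large ball, which yields $u_{\varepsilon,n}\to u_{\varepsilon}$ in $L^2(\mathbb{R}^3)$; and (ii) a concentration--compactness step via Lemma~\ref{lem2-2}, where a hypothetical Dirac mass $\nu_{j_0}\ge \mathcal{S}_s^{3/2s}$ is shown to force $c_{V_0}\ge \tfrac{s}{3}\mathcal{S}_s^{3/2s}$ (using the $\mathcal{N}_{\varepsilon}^{d_0}$ membership \eqref{equ4-29} and the identity $\tfrac{(q-4)s+(q-2)t}{2(q(s+t)-3)}+\tfrac{(2_s^{\ast}-q)(s+t)}{2_s^{\ast}(q(s+t)-3)}=\tfrac{s}{3}$), contradicting Lemma~\ref{lem3-4}. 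Once $u_{\varepsilon,n}\to u_{\varepsilon}$ strongly, the penalty coefficient, the energy bound, and the membership $u_{\varepsilon}\in\mathcal{N}_{\varepsilon}^{d_0}$ all follow immediately, and $u_{\varepsilon}\neq 0$ since $0\notin\mathcal{N}_{\varepsilon}^{d_0}$. Your last paragraph gestures at the concentration--compactness ingredient but does not deploy it to obtain the strong convergence that the rest of your argument actually needs.
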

\begin{proof}
By Lemma \ref{lem4-4}, there exists $\bar{\varepsilon}>0$ such that for each $\varepsilon\in(0,\bar{\varepsilon}]$, there exists a sequence $\{u_{\varepsilon,n}\}\subset \mathcal{J}_{\varepsilon}^{\mathcal{D}_{\varepsilon}+\varepsilon}\cap\mathcal{N}_{\varepsilon}^{d_0}$ such that $\mathcal{J}_{\varepsilon_n}'(u_{\varepsilon,n})\rightarrow0$ as $n\rightarrow\infty$ in $(H_{\varepsilon})'$. Since $\mathcal{N}_{\varepsilon}^{d_0}$ is bounded, then $\{u_{\varepsilon,n}\}$ is bounded in $H_{\varepsilon}$ and up to a subsequence, we may assume that there exists $u_{\varepsilon}\in H_{\varepsilon}$ such that $u_{\varepsilon,n}\rightharpoonup u_{\varepsilon}$ in $H_{\varepsilon}$, $u_{\varepsilon,n}\rightarrow u_{\varepsilon}$ in $L_{loc}^p(\mathbb{R}^3)$ for $1\leq p<2_s^{\ast}$ and $u_{\varepsilon,n}\rightarrow u_{\varepsilon}$ a.e. in $\mathbb{R}^3$. It is easy to check that $u_{\varepsilon}$ satisfies
\begin{equation}\label{equ4-26}
(-\Delta)^su_{\varepsilon}+V(\varepsilon z)u_{\varepsilon}+\phi_{u_{\varepsilon}}^tu_{\varepsilon}=-4\mu_{\varepsilon}\chi_{\mathbb{R}^3\backslash\Lambda_{\varepsilon}}u_{\varepsilon}+g(\varepsilon z,u_{\varepsilon})\quad \text{in}\,\,\mathbb{R}^3,
\end{equation}
where $\mu_{\varepsilon,n}=\Big(\int_{\mathbb{R}^3\backslash\Lambda_{\varepsilon}}u_{\varepsilon,n}^2\,{\rm d}z-\varepsilon\Big)_{+}\rightarrow\mu_{\varepsilon}$ as $n\rightarrow\infty$.

We claim that
\begin{equation}\label{equ4-27}
\lim_{R\rightarrow\infty}\sup_{n\geq1}\int_{|x|\geq R}(|D_su_{\varepsilon,n}|^2+V(\varepsilon z)|u_{\varepsilon,n}|^2)\,{\rm d}z=0.
\end{equation}

Indeed, Choosing a cutoff function $\psi_{\rho}\in C^{\infty}(\mathbb{R}^3)$ such that $\psi_{\rho}(z)=1$ on $\mathbb{R}^3\backslash B_{2\rho}(0)$, $\psi_{\rho}(z)=0$ on $B_{\rho}(0)$, $0\leq\psi_{\rho}\leq1$ and $|\nabla \psi_{\rho}|\leq\frac{C}{\rho}$. Since $\psi_{\rho}u_{\varepsilon,n}\in H_{\varepsilon}$, then $\langle\mathcal{J}_{\varepsilon_n}'(u_{\varepsilon,n}),\psi_{\rho}u_{\varepsilon,n}\rangle\rightarrow0$ as $n\rightarrow\infty$. Thus, for sufficiently large $\rho$ such that $\Lambda_{\varepsilon}\subset B_{\rho}(0)$, we have
\begin{align*}
&\int_{\mathbb{R}^3}(|D_su_{\varepsilon,n}|^2+V(\varepsilon z)|u_{\varepsilon,n}|^2)\psi_{\rho}\,{\rm d}z+\int_{\mathbb{R}^3}\int_{\mathbb{R}^3}\frac{(u_{\varepsilon,n}(z)-u_{\varepsilon,n}(y))(\psi_{\rho}(z)-\psi_{\rho}(y))u_{\varepsilon, n}(y)}{|z-y|^{3+2s}}\,{\rm d}y\,{\rm d}z\\
&=\int_{\mathbb{R}^3}g(\varepsilon z,u_{\varepsilon,n})u_{\varepsilon,n}\psi_{\rho}\,{\rm d}z-\int_{\mathbb{R}^3}\phi_{u_{\varepsilon,n}}^t|u_{\varepsilon,n}|^2\psi_{\rho}\,{\rm d}z-4\Big(\int_{\mathbb{R}^3\backslash\Lambda_{\varepsilon}}|u_{\varepsilon,n}|^2\,{\rm d}z-\varepsilon\Big)_{+}\int_{\mathbb{R}^3\backslash\Lambda_{\varepsilon}}|u_{\varepsilon,n}|^2\psi_{\rho}\,{\rm d}z\\
&\leq\int_{\mathbb{R}^3}g(\varepsilon z,u_{\varepsilon,n})u_{\varepsilon,n}\psi_{\rho}\,{\rm d}z\leq\frac{V_0}{k}\int_{\mathbb{R}^3}|u_{\varepsilon,n}|^2\psi_{\rho}\,{\rm d}z.
\end{align*}
In view of the fact that $|D_s\psi_{\rho}|^2\leq\frac{C}{\rho^{2s}}$ for any $z\in\mathbb{R}^3$ and H\"{o}lder's inequality, we deduce that
\begin{align*}
&\int_{\mathbb{R}^3}\int_{\mathbb{R}^3}\frac{(u_{\varepsilon,n}(z)-u_{\varepsilon,n}(y))(\psi_{\rho}(z)-\psi_{\rho}(y))u_{\varepsilon, n}(y)}{|z-y|^{3+2s}}\,{\rm d}y\,{\rm d}z\\
&\leq\Big(\int_{\mathbb{R}^3}|D_su_{\varepsilon,n}|^2\,{\rm d}z\Big)^{\frac{1}{2}}\Big(\int_{\mathbb{R}^3}|D_s\psi_R|^2|u_{\varepsilon,n}|^2\,{\rm d}z\Big)^{\frac{1}{2}}\leq \frac{C}{\rho^s}\|u_{\varepsilon,n}\|_2\leq\frac{C}{\rho^s}.
\end{align*}
Therefore, from the estimates above, we obtain
\begin{equation*}
\int_{\mathbb{R}^3\backslash B_{2\rho}(0)}(|D_su_{\varepsilon,n}|^2+V(\varepsilon z)|u_{\varepsilon,n}|^2)\,{\rm d}z\leq\frac{C}{\rho^s}.
\end{equation*}
Thus, the claim follows. From \eqref{equ4-27}, we see that $u_{\varepsilon,n}\rightarrow u_{\varepsilon}$ in $L^2(\mathbb{R}^3)$.

Next, we claim that $u_{\varepsilon,n}\rightarrow u_{\varepsilon}$ in $L^{2_s^{\ast}}(\mathbb{R}^3)$ as $n\rightarrow\infty$. Indeed, from Lemma \ref{lem2-2}, we may assume that
\begin{equation*}
|D_su_{\varepsilon,n}|^2\rightharpoonup\mu\,\,\text{and}\,\, (u_{\varepsilon,n})^{2_s^{\ast}}\rightharpoonup\nu\,\, \text{weakly-}\ast\,\, \text{in}\,\, \mathcal{M}(\mathbb{R}^3)\,\,\text{as}\,\, n\rightarrow\infty
\end{equation*}
and there exist a (at most countable) set of distinct points $\{x_j\}_{j\in J}\subset\mathbb{R}^3$, $\mu_j\geq0$, $\nu_j\geq0$ with $\mu_j+\nu_j>0$ ($j\in J$) such that
\begin{equation}\label{equ4-28}
\mu\geq|D_su_{\varepsilon}|^2+\sum_{j\in J}\mu_j\delta_{x_j},\quad \nu=(u_{\varepsilon})^{2_s^{\ast}}+\sum_{j\in J}\nu_j\delta_{x_j},\quad\nu_j\leq(\mathcal{S}_s^{-1}\mu_j)^{\frac{2_s^{\ast}}{2}}\,\, \text{for any}\,\, j\in J.
\end{equation}
We are going to show that $J={\O}$. Suppose by contradiction that $J\neq{\O}$, i.e., there exists $x_{j_0}\in \mathbb{R}^3$ for some $j_0\in J$. Similar to the arguments in Proposition \ref{pro2-3}, we get $\nu_{j_0}\geq\mathcal{S}_s^{\frac{3}{2s}}$. On the other hand, since $\{u_{\varepsilon,n}\}\subset \mathcal{N}_{\varepsilon}^{d_0}$, by the definition of $\mathcal{N}_{\varepsilon}^{d_0}$, there exist $\{W_n\}\subset \mathcal{L}_{V_0}$, $\{x_n\}_{n=1}^{\infty}\subset\mathcal{M}^{\beta}$ such that
\begin{equation*}
\|u_{\varepsilon,n}-\varphi_{\varepsilon}(\cdot-\frac{x_n}{\varepsilon})W_n(\cdot-\frac{x_n}{\varepsilon})\|_{H_{\varepsilon}}\leq \frac{3}{2}d_0.
\end{equation*}
Since $\mathcal{L}_{V_0}$ and $\mathcal{M}^{\beta}$ are compact, there exist $W_0\in\mathcal{L}_{V_0}$, $x'\in\mathcal{M}^{\beta}$ such that $W_n\rightarrow W_0$ in $H^s(\mathbb{R}^3)$ and $x_n\rightarrow x'$ as $n\rightarrow\infty$. Thus, for $\varepsilon>0$ small,
\begin{equation}\label{equ4-29}
\|u_{\varepsilon,n}-\varphi_{\varepsilon}(\cdot-\frac{x'}{\varepsilon})W_0(\cdot-\frac{x'}{\varepsilon})\|_{H_{\varepsilon}}\leq2d_0.
\end{equation}
It follows from \eqref{equ4-28}, \eqref{equ4-29}, $(f_3)$, $v_{j_0}\geq\mathcal{S}_s^{\frac{3}{2s}}$ and $W_0\in\mathcal{L}_{V_0}$ that
\begin{align*}
\widetilde{\mathcal{C}}_{\varepsilon}+\varepsilon&\geq \mathcal{J}_{\varepsilon}(u_{\varepsilon,n})=\mathcal{J}_{\varepsilon}(u_{\varepsilon,n})-\frac{1}{q(s+t)-3}\mathcal{G}_{V_0}(W_0)\\
&\geq\frac{(q-4)s+(q-2)t}{2(q(s+t)-3)}\int_{\mathbb{R}^3}|D_su_{\varepsilon,n}|^2\,{\rm d}z\\
&+\frac{4s+2t-3}{2(q(s+t)-3)}\Big(\int_{\mathbb{R}^3}|D_su_{\varepsilon,n}|^2\,{\rm d}z-\int_{\mathbb{R}^3}|D_sW_0|^2\,{\rm d}z\Big)\\
&+\frac{2s+2t-3}{2(q(s+t)-3)}\Big(\int_{\mathbb{R}^3}V(\varepsilon z)u_{\varepsilon,n}^2\,{\rm d}z-\int_{\mathbb{R}^3}V_0W_0^2\,{\rm d}z\Big)\\
&+\frac{4s+2t-3}{4(q(s+t)-3)}\Big(\int_{\mathbb{R}^3}\phi_{u_{\varepsilon,n}}^tu_{\varepsilon,n}^2\,{\rm d}z-\int_{\mathbb{R}^3}\phi_{W_0}^tW_0^2\,{\rm d}z\Big)\\
&+\frac{(2_s^{\ast}-q)(s+t)}{2_s^{\ast}(q(s+t)-3)}\int_{\mathbb{R}^3}u_{\varepsilon,n}^{2_s^{\ast}}\,{\rm d}z+\frac{2_s^{\ast}(s+t)-3}{2_s^{\ast}(q(s+t)-3)}\Big(\int_{\mathbb{R}^3}u_{\varepsilon,n}^{2_s^{\ast}}\,{\rm d}z-\int_{\mathbb{R}^3}W_0^{2_s^{\ast}}\,{\rm d}z\Big)\\
&-\Big(\int_{\mathbb{R}^3}F(u_{\varepsilon,n})\,{\rm d}z-\int_{\mathbb{R}^3}F(W_0)\,{\rm d}z\Big)\\
&\geq\frac{(q-4)s+(q-2)t}{2(q(s+t)-3)}\int_{\mathbb{R}^3}|D_su_{\varepsilon,n}|^2\,{\rm d}z+\frac{(2_s^{\ast}-q)(s+t)}{2_s^{\ast}(q(s+t)-3)}\int_{\mathbb{R}^3}u_{\varepsilon,n}^{2_s^{\ast}}\,{\rm d}z-Cd_0+o(1)\\
&\geq\frac{(q-4)s+(q-2)t}{2(q(s+t)-3)}\mu_{j_0}+\frac{(2_s^{\ast}-q)(s+t)}{2_s^{\ast}(q(s+t)-3)}\nu_{j_0}-Cd_0+o(1)\\
&\geq\frac{(q-4)s+(q-2)t}{2(q(s+t)-3)}\mathcal{S}_s\nu_{j_0}^{2/2_s^{\ast}}+\frac{(2_s^{\ast}-q)(s+t)}{2_s^{\ast}(q(s+t)-3)}\nu_{j_0}-Cd_0+o(1)\\
&\geq\Big(\frac{(q-4)s+(q-2)t}{2(q(s+t)-3)}+\frac{(2_s^{\ast}-q)(s+t)}{2_s^{\ast}(q(s+t)-3)}\Big)\mathcal{S}_s^{\frac{3}{2s}}-Cd_0+o(1)\\
&=\frac{s}{3}\mathcal{S}_s^{\frac{3}{2s}}-Cd_0+o(1)
\end{align*}
where $o(1)\rightarrow0$ as $\varepsilon\rightarrow0$. Taking $\varepsilon\rightarrow0$ and $d_0\rightarrow0$, we have that $c_{V_0}\geq\frac{s}{3}\mathcal{S}_s^{\frac{3}{2s}}$, contradicts with Lemma \ref{lem3-4}. Therefore, $u_{\varepsilon,n}^{+}\rightarrow u_{\varepsilon}^{+}$ in $L^{2_s^{\ast}}(\mathbb{R}^3)$. Together with $u_{\varepsilon,n}\rightarrow u_{\varepsilon}$ in $L^2(\mathbb{R}^3)$, Lebesgue Dominated Convergence Theorem implies that for $\varepsilon>0$ small,
\begin{equation}\label{equ4-30}
\int_{\mathbb{R}^3}g(\varepsilon z,u_{\varepsilon,n})u_{\varepsilon,n}\,{\rm d}z\rightarrow\int_{\mathbb{R}^3}g(\varepsilon z,u_{\varepsilon})u_{\varepsilon}\,{\rm d}z,\quad n\rightarrow\infty.
\end{equation}
From \eqref{equ4-26} and $\mathcal{J}_{\varepsilon}'(u_{\varepsilon,n})\rightarrow0$ as $n\rightarrow\infty$, we get that
\begin{align}\label{equ4-31}
\int_{\mathbb{R}^3}|D_su_{\varepsilon}|^2\,{\rm d}z&+\int_{\mathbb{R}^3}V(\varepsilon z)|u_{\varepsilon}|^2\,{\rm d}z+\int_{\mathbb{R}^3}\phi_{u_{\varepsilon}}^t|u_{\varepsilon}|^2\,{\rm d}z\nonumber\\
&+4\mu_{\varepsilon}\int_{\mathbb{R}^3}\chi_{\mathbb{R}^3\backslash\Lambda_{\varepsilon}}|u_{\varepsilon}|^2\,{\rm d}z=\int_{\mathbb{R}^3}g(\varepsilon z,u_{\varepsilon})u_{\varepsilon}\,{\rm d}z
\end{align}
and
\begin{align}\label{equ4-32}
\int_{\mathbb{R}^3}|D_su_{\varepsilon,n}|^2\,{\rm d}z&+\int_{\mathbb{R}^3}V(\varepsilon z)|u_{\varepsilon,n}|^2\,{\rm d}z+\int_{\mathbb{R}^3}\phi_{u_{\varepsilon,n}}^t|u_{\varepsilon,n}|^2\,{\rm d}z\nonumber\\
&+4\mu_{\varepsilon,n}\int_{\mathbb{R}^3}\chi_{\mathbb{R}^3\backslash\Lambda_{\varepsilon}}|u_{\varepsilon,n}|^2\,{\rm d}z=\int_{\mathbb{R}^3}g(\varepsilon z,u_{\varepsilon,n})u_{\varepsilon,n}\,{\rm d}z+o(1),
\end{align}
where $o(1)\rightarrow0$ as $n\rightarrow\infty$. From \eqref{equ4-30}, \eqref{equ4-31} and \eqref{equ4-32}, we can deduce that
\begin{equation*}
u_{\varepsilon,n}\rightarrow u_{\varepsilon} \quad \text{in}\,\, H_{\varepsilon}\quad \text{and}\quad \mu_{\varepsilon}=\Big(\int_{\mathbb{R}^3\backslash\Lambda_{\varepsilon}}u_{\varepsilon}^2\,{\rm d}z-\varepsilon\Big)_{+}.
\end{equation*}
Since $0\not\in\mathcal{N}_{\varepsilon}^{d_0}$, $u_{\varepsilon}\neq0$ and $u_{\varepsilon}\in\mathcal{N}_{\varepsilon}^{d_0}\cap\mathcal{J}_{\varepsilon}^{\mathcal{D}_{\varepsilon}+\varepsilon}$. The proof is completed.
\end{proof}

{\bf Proof of Theorem \ref{thm1-1}.} Using Lemma \ref{lem4-5} and by similar arguments as the proof the Theorem 1.1 in \cite{Teng4}, we can complete the proof of Theorem \ref{thm1-1}.

{\bf Acknowledgements.}
The work is supported by NSFC grant 11501403.

\end{document}